\newcommand{\E}{\text{\calligra om}\,}
\newcommand{\Addresses}{{
  \bigskip
  \footnotesize

  \textsc{Department of Mathematics, University of Notre Dame,
    Notre Dame, IN 46556}\par\nopagebreak
  \textit{E-mail address}: \texttt{mperlman@nd.edu}

}}
\newcommand{\tn}{\textnormal}
\newcommand{\D}{\mathcal{D}}
\newcommand{\C}{\mathbb{C}}
\newcommand{\bw}{\bigwedge}
\newcommand{\bS}{\mathbb{S}}
\newtheorem {theorem} {Theorem}
\newtheorem {lemma}[theorem] {Lemma}
\newtheorem {proposition}[theorem] {Proposition}
\theoremstyle{remark}
\newtheorem{remark}[theorem] {Remark}
\numberwithin{theorem}{section}
\numberwithin{equation}{section}
\title{Lyubeznik numbers for Pfaffian Rings}
\author{Michael Perlman}
\begin{document}
\begin{abstract}
We study the structure of local cohomology with support in Pfaffian varieties as a module over the Weyl algebra $\D_X$ of differential operators on the space of skew-symmetric matrices $X=\bw^2\C^n$. The simple composition factors of these modules are known by the work of Raicu-Weyman, and when $n$ is odd, the general theory implies that the local cohomology modules are semi-simple. When $n$ is even, we show that the local cohomology is a direct sum of indecomposable modules coming from the pole order filtration of the Pfaffian hypersurface. We then determine the Lyubeznik numbers for Pfaffian rings by computing local cohomology with support in the origin of the indecomposable summands referred to above.
\end{abstract}

\maketitle
\section{Introduction}

Let $X=\bw^2\C^n$ be the space of $n\times n$ skew-symmetric matrices over the complex numbers, endowed with the natural action of $\tn{GL}=\tn{GL}_n(\C)$. This action preserves rank, and for $0\leq k\leq \lfloor n/2 \rfloor$, the group $\tn{GL}$ acts transitively on the set of matrices $O_k$ of rank $2k$ (skew-symmetric matrices have even rank). The orbit closures $\overline{O}_k$ are the Pfaffian varieties of matrices of rank $\leq 2k$, and the goal of this work is to compute the Lyubeznik numbers of the local ring of each Pfaffian variety at the cone point. Lyubeznik numbers are numerical invariants associated to local rings \cite[Section 4]{lyubeznik1993finiteness}, and they may be defined in our case as follows. Let $S=\C[X]\cong \C[x_{i,j}]_{1\leq i<j\leq n}$ be the ring of polynomial functions on $X$ with homogeneous maximal ideal $\mathfrak{m}\subseteq S$, and let $E$ be the injective hull of $S/\mathfrak{m}$. Writing $R^k=(\C[\overline{O}_k])_{\mathfrak{m}}$ for the localization of the homogeneous coordinate ring of the Pfaffian variety $\overline{O}_k$, the Lyubeznik numbers $\lambda_{i,j}(R^k)$ are determined by the following composition of local cohomology functors:
\begin{equation}\label{lyubeznik}
H^i_{\{0\}}H^{\binom{n}{2}-j}_{\overline{O}_k}(S)=E^{\oplus \lambda_{i,j}(R^k)},
\end{equation}
where $\binom{n}{2}=\dim X$. For more information on Lyubeznik numbers in general, see the survey \cite{nunez2016survey}.

We compute the compositions (\ref{lyubeznik}) by studying the $\D_X$-module structure of $H^{\bullet}_{\overline{O}_k}(S)$, where $\D_X$ is the Weyl algebra of differential operators on $X$ with polynomial coefficients. In general, local cohomology of the polynomial ring is a holonomic (and thus finite length) $\D$-module, and the simple composition factors in the case of Pfaffian varieties are known by Raicu-Weyman \cite[Main Theorem]{raicu2016local}. We expand upon their work by describing the filtrations of these modules in the category $\tn{mod}_{\tn{GL}}(\D_X)$ of $\tn{GL}$-equivariant coherent $\D$-modules on $X$. In this category, the simple objects are in one-to-one correspondence with orbits $O_k$, and we denote them by $E=D_0, D_1,\cdots, D_{\lfloor n/2 \rfloor}=S$. When $n=2m+1$ is odd, the category $\tn{mod}_{\tn{GL}}(\D_X)$ is semi-simple \cite[Theorem 5.7(b)]{lHorincz2018categories}, and the $\D_X$-module structure of $H^j_{\overline{O}_k}(S)$ for $j\geq 0$ is completely determined by the previous work \cite{raicu2016local}. When $n=2m$ is even, the category $\tn{mod}_{\tn{GL}}(\mathcal{D}_X)$ is no longer semi-simple \cite[Theorem 5.7(a)]{lHorincz2018categories}. In particular, the localization $S_{\tn{Pf}}$ at the $n\times n$ Pfaffian is not semi-simple, and has composition series as follows \cite[Section 6]{raicu2016characters}:
\begin{equation}\label{filtration}
0\subset S \subsetneq \langle \textnormal{Pf}^{-2} \rangle_{\mathcal{D}}\subsetneq \langle \textnormal{Pf}^{-4} \rangle_{\mathcal{D}}\subsetneq \cdots \subsetneq \langle \textnormal{Pf}^{-(n-2)} \rangle_{\mathcal{D}}\subsetneq S_{\textnormal{Pf}},
\end{equation}
where $\langle \tn{Pf}^{\; d}\rangle_{\D}$ is the $\D_X$-submodule of $S_{\tn{Pf}}$ generated by the $d$-th power of Pf, and $\langle \tn{Pf}^{-2s}\rangle_{\D}/\langle \tn{Pf}^{-2s+2}\rangle_{\D}\cong D_{m-s}$. Set $Q_m=S_{\tn{Pf}}$, and for $p=0,\cdots, m-1$ we define
\begin{equation}\label{Q}
Q_p=\frac{S_{\tn{Pf}}}{\langle \tn{Pf}^{\;2(p-m+1)}\rangle_{\D}}.
\end{equation}
For each $p=0,\cdots ,m$, the $\mathcal{D}_X$-module $Q_p$ is indecomposable (see \cite[Theorem 5.7(a)]{lHorincz2018categories}, \cite[Lemma 6.3]{lHorincz2018iterated}), and has composition factors $D_0,\cdots , D_p$, each with multiplicity one.

\begin{theorem}\label{structure}
Let $n=2m$ be even. For every $0\leq k\leq m-1$ and $j\geq 0$ the local cohomology modules $H^j_{\overline{O}_k}(S)$ are direct sums of the modules $Q_0, Q_1, \cdots, Q_{k}$.
\end{theorem}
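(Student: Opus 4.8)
The proof would go by induction on $k$. The case $k=0$ is classical: $H^j_{\{0\}}(S)=0$ for $j\neq\binom n2$, while $H^{\binom n2}_{\{0\}}(S)=E=Q_0$. So fix $1\le k\le m-1$ and assume $H^\bullet_{\overline O_{k-1}}(S)$ is already a direct sum of copies of $Q_0,\dots,Q_{k-1}$. I would draw on two inputs. First, by \cite[Main Theorem]{raicu2016local} the composition factors of each $H^j_{\overline O_k}(S)$, with multiplicities, are known; since these modules are supported on $\overline O_k$, the factors lie among $D_0,\dots,D_k$. Second, from the description of $\tn{mod}_{\tn{GL}}(\D_X)$ in \cite[Theorem~5.7(a)]{lHorincz2018categories} one sees that $Q_0,\dots,Q_{m-1}$ are precisely the proper quotients of the uniserial module $S_{\tn{Pf}}$, each with simple head $E$, and that the full subcategory of objects admitting a finite filtration with subquotients among $Q_0,\dots,Q_{m-1}$ has the property that every such object splits as a direct sum of copies of the $Q_p$'s.

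Granting this, and noting that by the support condition any $Q_p$ occurring in a subquotient of a module supported on $\overline O_k$ has $p\le k$, the theorem reduces to the assertion that each $H^j_{\overline O_k}(S)$ admits a filtration with subquotients among the $Q_p$'s. To produce it I would use the excision triangle
\[
R\Gamma_{\overline O_{k-1}}(S)\longrightarrow R\Gamma_{\overline O_k}(S)\longrightarrow C\xrightarrow{\ +1\ }.
\]
Because $O_k=\overline O_k\setminus\overline O_{k-1}$ is smooth and closed of codimension $c_k$ in $X\setminus\overline O_{k-1}$, excision together with Kashiwara's theorem identifies $C$ with $\ell_+\mathcal{O}_{O_k}[-c_k]$, the $\D_X$-module pushforward of $\mathcal{O}_{O_k}$ along the orbit embedding $\ell\colon O_k\into X$. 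The first term of the triangle has cohomology a direct sum of $Q_0,\dots,Q_{k-1}$ by induction, so the problem comes down to (a) showing that the cohomology modules $\mathcal{H}^q(\ell_+\mathcal{O}_{O_k})$ are filtered by the $Q_p$'s, and (b) checking that the long exact sequence of the triangle splits into short exact sequences; for (b) I would invoke strictness of the weight filtration on these mixed Hodge modules, arranging that the weights of $\mathcal{H}^\bullet(\ell_+\mathcal{O}_{O_k})$ and of $H^\bullet_{\overline O_{k-1}}(S)$ are separated so that the connecting morphisms vanish. For (a), the $*$-extension $\ell_+\mathcal{O}_{O_k}$ of $\mathcal{O}_{O_k}$ from the smooth locus of $\overline O_k$ should be matched against the pole-order filtration (\ref{filtration}): as $\overline O_k\subseteq V(\tn{Pf})=\overline O_{m-1}$, restricting that filtration endows $\ell_+\mathcal{O}_{O_k}$ with a filtration whose subquotients are among the $Q_p$, and the multiplicity count of \cite{raicu2016local} identifies exactly which $Q_p$ (with $p\le k$) appear and with what multiplicity.

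The main obstacle, I expect, is part (a): honestly controlling the $\D_X$-module structure of the pushforward $\ell_+\mathcal{O}_{O_k}$, i.e.\ showing that only the quotients $Q_0,\dots,Q_k$ of $S_{\tn{Pf}}$ occur and not the other indecomposables of $\tn{mod}_{\tn{GL}}(\D_X)$ supported on $\overline O_k$ --- for instance a simple $D_i$ with $0<i\le k$ appearing by itself, or a nonsplit extension of simples $D_i,D_j$ with $0<i<j\le k$, in either case a module containing no copy of $E$. The list of composition factors furnished by \cite{raicu2016local} does not determine this by itself: one must pin down the actual extension classes. I would attempt this either by a direct analysis of the $\D_X$-module structure of the localizations at sub-maximal Pfaffians appearing in the \v{C}ech complex that computes $H^\bullet_{\overline O_k}(S)$, or through a resolution-of-singularities computation for $\overline O_k$ in which the failure of semisimplicity is pinned entirely to the non-properness of the open immersion $X\setminus\overline O_k\into X$, combined with the known description of the intersection-cohomology $\D$-modules of the Pfaffian varieties.
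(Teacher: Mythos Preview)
Your outline is coherent but diverges from the paper's argument, and the step you flag as the obstacle is a genuine gap, not a technicality.

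The paper does not induct on $k$ and never analyzes $\ell_+\mathcal{O}_{O_k}$. Instead it inducts on the \emph{size} $n$ of the matrix. The geometric input is the projection $\pi\colon X_1\to X'$ from $\{x_{1,2}\neq 0\}\subset X$ to the space $X'$ of $(n-2)\times(n-2)$ skew-symmetric matrices, which satisfies $\pi^{-1}(O'_k)=O_{k+1}\cap X_1$ and $\pi^\ast Q'_p=(Q_{p+1})|_{X_1}$. Combined with the categorical lemmas \cite[Lemmas~6.3--6.6]{lHorincz2018iterated} (transported through the equivalence of \cite[Theorems~5.4,~5.7]{lHorincz2018categories}), the inductive hypothesis on $X'$ shows that $j_\ast j^\ast H^i_{\overline O_k}(S)$ is already a direct sum of $Q_p$'s, where $j\colon X\setminus\{0\}\hookrightarrow X$. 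The four-term sequence
\[
0\longrightarrow H^0_{\{0\}}H^i_{\overline O_k}(S)\longrightarrow H^i_{\overline O_k}(S)\longrightarrow j_\ast j^\ast H^i_{\overline O_k}(S)\longrightarrow H^1_{\{0\}}H^i_{\overline O_k}(S)\longrightarrow 0
\]
then reduces the theorem, via \cite[Lemma~6.6]{lHorincz2018iterated}, to the single vanishing statement $H^1_{\{0\}}H^i_{\overline O_k}(S)=0$. That vanishing is proved by concrete computation: one passes to the cofinal system $\{I_{(k+1)\times e}\}_e$, uses the filtration of $S/I_{\underline z}$ by the subquotients $J_{\underline x,p}$ to reduce to $H^1_{\{0\}}\bigl(\mathrm{Ext}^j_S(J_{\underline x,l},S)\bigr)=0$, realizes these Ext modules on a partial flag variety (showing they are again filtered by $J_{\nu,p}$'s), and finishes with $H^1_{\{0\}}(J_{\nu,l})=0$ by graded local duality and \cite[Theorem~3.2]{perlman2017regularity}. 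The point is that the paper peels off the \emph{smallest} stratum $\{0\}$, so the needed control is over a single point, whereas your excision peels off $\overline O_{k-1}$ and requires control over the full singular locus of $\overline O_k$.

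On your proposal itself, two remarks. First, your part (a) is not a reduction: determining the $\mathcal D_X$-module structure of $\ell_+\mathcal O_{O_k}$ is essentially equivalent to the theorem, and neither the \v{C}ech description nor a resolution of $\overline O_k$ gives you for free that only the quotients $Q_p$ of $S_{\mathrm{Pf}}$ occur rather than, say, a bare $D_i$ with $i>0$ or a subobject of $S_{\mathrm{Pf}}$. The composition-factor count of \cite{raicu2016local} does not resolve this, as you note. Second, the weight argument for (b) is not justified as stated: the $Q_p$ are uniserial with composition factors $D_0,\ldots,D_p$ and hence are \emph{not} pure, and the simples $D_0,\ldots,D_{k-1}$ occur in both $H^\bullet_{\overline O_{k-1}}(S)$ and in $\mathcal H^\bullet(\ell_+\mathcal O_{O_k})$, so strictness of the weight filtration does not by itself force the connecting maps to vanish. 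Finally, the categorical assertion you extract from \cite[Theorem~5.7(a)]{lHorincz2018categories}---that any object filtered by $Q_p$'s is a direct sum of $Q_p$'s---is stronger than what that reference provides; the paper instead uses the more specific \cite[Lemma~6.6]{lHorincz2018iterated}, which applies only after one already knows the module sits in the four-term sequence above with vanishing $H^1_{\{0\}}$.
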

\noindent We make the statement of the theorem more precise as follows. Write $\Gamma_{\D}$ for the Grothendieck group of the category $\tn{mod}_{\tn{GL}}(\D_X)$. Using \cite[Main Theorem]{raicu2016local}, and applying binomial identities in Lemma \ref{Qdecomp}, we obtain the following formulas in $\Gamma_{\D}[q]$ for $n=2m$: $\sum_{j\geq 0} [H^j_{\overline{O}_{m-1}}(S)]_{\D}\cdot q^j=[Q_{m-1}]_{\D}\cdot q$ and for $0\leq k\leq m-2$
\begin{equation}\label{RW}
\sum_{j\geq 0}\left[H^j_{\overline{O}_k}(S)\right]_{\D}\cdot q^j=\sum_{p=0}^k [Q_p]_{\D}\cdot q^{2(m-k)^2-(m-k)+4(k-p)}\cdot \binom{m-p-2}{k-p}_{q^4},
\end{equation}
where $\binom{a}{b}_{q^4}$ is a Gaussian binomial coefficient (see Section \ref{Rep}). Since $[Q_0]_{\D}, \cdots ,[Q_m]_{\D}$ form a basis for $\Gamma_{\D}$, Theorem \ref{structure} says that formula (\ref{RW}) completely describes the indecomposable summands of the local cohomology.

The problem of computing the Lyubeznik numbers (\ref{lyubeznik}) when $n=2m$ now reduces to determining the number of copies of the simple module $E$ inside $H^i_{\{0\}}(Q_p)$ for all $i\geq 0$ and $0\leq  p\leq m$. When $n=2m+1$ is odd, semi-simplicity of the category $\tn{mod}_{\tn{GL}}(\D_X)$ implies that, in order to compute the Lyubeznik numbers using \cite[Main Theorem]{raicu2016local}, it suffices to determine the number of copies of $E$ in the local cohomology $H^i_{\{0\}}(D_s)$ for all $i\geq 0$ and all $s$. We encode our Lyubeznik number computations via generating functions $L_k(q,w)\in \mathbb{Z}[q,w]$:
\begin{equation}
L_k(q,w)=\sum_{i,j\geq 0} \lambda_{i,j}(R^k)\cdot q^i\cdot w^j.
\end{equation}
We are now ready to state the main theorem. 
\begin{theorem}\label{main}
Let $m=\lfloor n/2 \rfloor$. For $n$ even, $L_{m-1}(q,w)=(q\cdot w)^{\binom{n}{2}-1}$. Otherwise, for $0\leq k<m$ the Lyubeznik numbers for $R^k$ are given by the following formulas:
\begin{equation}\label{lyubeznikForm}
L_k(q,w)=
\begin{cases}
\displaystyle\sum_{s=0}^k q^{s(2s+3)}\cdot \binom{m-1}{s}_{q^4}\cdot w^{k(2k+3)-4s(k-m+1)}\cdot \binom{m-s-2}{k-s}_{w^4} & \tn{ if $n=2m$ is even,}\\
\displaystyle\sum_{s=0}^k q^{s(2s+1)}\cdot \binom{m}{s}_{q^4}\cdot w^{k(2k+3)-2s(2k-2m+1)}\cdot \binom{m-s-1}{k-s}_{w^4} & \tn{ if $n=2m+1$ is odd.}
\end{cases}
\end{equation}
\end{theorem}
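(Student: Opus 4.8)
The plan is to reduce Theorem~\ref{main} to the local cohomology of the few modules $Q_p$ and $D_s$ and then to compute that. Since $H^i_{\{0\}}(-)$ commutes with finite direct sums and $E$ is the unique simple $\D_X$-module supported at the origin, $\lambda_{i,j}(R^k)$ is obtained by expanding $H^{\binom n2-j}_{\overline O_k}(S)$ as a direct sum of the $Q_p$ (for $n=2m$, by Theorem~\ref{structure} and (\ref{RW})) or of the $D_s$ (for $n=2m+1$, by semisimplicity of $\tn{mod}_{\tn{GL}}(\D_X)$ and the odd-case analogue of (\ref{RW}) from \cite[Main Theorem]{raicu2016local}), and then counting copies of $E$ in $H^i_{\{0\}}$ of each summand. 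Writing $\mu_i(M)$ for the number of copies of $E$ in $H^i_{\{0\}}(M)$ and setting $G_p(q)=\sum_i\mu_i(Q_p)q^i$, $G^{\tn s}_s(q)=\sum_i\mu_i(D_s)q^i$, I would note that the $j$-variable in $L_k(q,w)$ is, after the substitution $j\mapsto\binom n2-j$, exactly the homological index in (\ref{RW}); grouping terms and applying the symmetry $\binom ab_{t^{-1}}=t^{-b(a-b)}\binom ab_t$ of Gaussian binomials (the manipulations behind Lemma~\ref{Qdecomp}) then factors $L_k(q,w)$ as $\sum_{p=0}^k G_p(q)\cdot w^{\,k(2k+3)-4p(k-m+1)}\binom{m-p-2}{k-p}_{w^4}$ for $n=2m$, and analogously for $n$ odd. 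Because $[Q_0]_{\D},\dots,[Q_m]_{\D}$ form a basis of $\Gamma_{\D}$, the $w$-polynomials occurring here are forced, so comparison with (\ref{lyubeznikForm}) shows that the entire theorem is equivalent to the two identities
\[
G_p(q)=q^{\,p(2p+3)}\binom{m-1}{p}_{q^4}\ \ (n=2m),\qquad G^{\tn s}_s(q)=q^{\,s(2s+1)}\binom{m}{s}_{q^4}\ \ (n=2m+1),
\]
the one-term instance $k=m-1$ (where $H^j_{\overline O_{m-1}}(S)$ vanishes except for $H^1_{\overline O_{m-1}}(S)=Q_{m-1}$) yielding $L_{m-1}(q,w)=(qw)^{\binom n2-1}$ once $G_{m-1}(q)=q^{\binom n2-1}$ is known.

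To compute the $\mu_i(M)$, I would first record that $\mu_i(M)=\dim_{\C}\tn{Ext}^i_{\D_X}(E,M)$: the functor $R\Gamma_{\{0\}}$ is right adjoint to the inclusion of complexes supported at $0$, and by Kashiwara's equivalence that subcategory is $D^b$ of finite-dimensional $\C$-vector spaces, in which every object is formal, so $R\tn{Hom}_{\D_X}(E,M)\cong R\tn{Hom}(E,R\Gamma_{\{0\}}M)$ splits into its cohomology. Equivalently, via Riemann--Hilbert, $\mu_i(M)$ is the dimension of the degree-$(i-\binom n2)$ cohomology of the costalk $i_0^{\,!}\tn{DR}(M)$ at the cone point. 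For a simple $M=D_s$ this is the costalk of the intersection complex of $\overline O_s$, which I would evaluate through the Kempf collapsing of $\overline O_s$ (the total space of a homogeneous bundle on the isotropic Grassmannian naturally attached to $\bw^2\C^n$) together with Borel--Weil--Bott, the same machinery underlying \cite[Main Theorem]{raicu2016local}; the costalk is cut out by ``half'' of the cohomology of the Grassmannian fibre, supplying the Gaussian binomial in $q^4$, while the prefactor ($q^{s(2s+1)}$ when $n=2m+1$, resp.\ $q^{s(2s+3)}$ when $n=2m$) records the codimension of $\overline O_s$ and the homological degree at which that half begins. This computes $G^{\tn s}_s$, and, together with the first paragraph, finishes Theorem~\ref{main} for $n$ odd.

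For $n=2m$ the remaining identity for $G_p(q)$ is the crux, the difficulty being that $Q_p$ is \emph{indecomposable}, so one cannot simply add up the contributions of its composition factors $D_0,\dots,D_p$ — the non-split extensions in (\ref{filtration}) make the relevant long exact sequences cancel. I would work with the short exact sequence $0\to D_p\to Q_p\to Q_{p-1}\to 0$ (valid since $Q_p=S_{\tn{Pf}}/\langle\tn{Pf}^{2(p-m+1)}\rangle_{\D}$, $Q_{p-1}=S_{\tn{Pf}}/\langle\tn{Pf}^{2(p-m)}\rangle_{\D}$, with $Q_0=E$) and its long exact sequence in $H^{\bullet}_{\{0\}}$, which determines $\mu_{\bullet}(Q_p)$ from $\mu_{\bullet}(D_p)$, $\mu_{\bullet}(Q_{p-1})$ and the connecting homomorphism $\partial$ — cup product with the extension class $\xi_p\in\tn{Ext}^1_{\D_X}(Q_{p-1},D_p)$, which is nonzero by indecomposability. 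The hard part is to show $\partial$ is ``as nonzero as possible''. I expect this to follow from purity for the internal $\tn{GL}$-weight grading, namely that each $H^i_{\{0\}}(D_s)$ sits in a single internal degree determined by $i$, so that in every homological degree $\partial$ is forced to be either zero or a graded isomorphism, and one then reads off which by a comparison of internal degrees; the spectral sequence $H^i_{\{0\}}(H^j_{\overline O_p}(S))\Rightarrow H^{i+j}_{\{0\}}(S)=E[-\binom n2]$, and the realization $Q_p\cong H^{\binom{n-2p}{2}}_{\overline O_p}(S)$ together with the iterated local cohomology calculus of \cite{lHorincz2018iterated} in the determinantal case, serve as consistency checks. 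Pinning down $\partial$ precisely is where I expect the main work to lie.

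Granting this, $G_p(q)$ emerges as an explicit alternating combination of the polynomials $q^{s(2s+3)}\binom{m-1}{s}_{q^4}$ coming from the $D_s$, and a telescoping Gaussian-binomial identity in the spirit of Lemma~\ref{Qdecomp} collapses it to $q^{p(2p+3)}\binom{m-1}{p}_{q^4}$. Substituting back into the factorization of $L_k(q,w)$ from the first paragraph then produces (\ref{lyubeznikForm}) for $n$ even and completes the argument.
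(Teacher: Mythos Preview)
Your reduction is correct and matches the paper: Lemma~\ref{swapOrder} is exactly the change of variables you describe, and Theorem~\ref{structure} (resp.\ semisimplicity) reduces Theorem~\ref{main} to the two identities $G_p(q)=q^{p(2p+3)}\binom{m-1}{p}_{q^4}$ for $n=2m$ and $G^{\tn s}_s(q)=q^{s(2s+1)}\binom{m}{s}_{q^4}$ for $n=2m+1$. The factorization of $L_k(q,w)$ you write down is the paper's proof verbatim once those identities are in hand.

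There is, however, a concrete error and a genuine gap in how you propose to establish those identities. First, your claimed value of $G^{\tn s}_s(q)$ for $n=2m$ is wrong: the correct answer (Theorem~\ref{evenSimp}) is $q^{s(2s-1)}\binom{m}{s}_{q^4}$, not $q^{s(2s+3)}\binom{m-1}{s}_{q^4}$. The polynomial you wrote is $G_s(q)$, not $G^{\tn s}_s(q)$; already for $s=m$ your formula gives $0$ (since $\binom{m-1}{m}=0$) whereas $H^{\bullet}_{\{0\}}(D_m)=H^{\bullet}_{\{0\}}(S)=E[-\binom{2m}{2}]$. Your heuristic ``codimension plus half of the fibre cohomology'' does not produce the right exponent or the right binomial, so the costalk sketch would need substantial revision before it could replace the paper's argument. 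Second, you correctly identify the analysis of the connecting map $\partial$ in $0\to D_p\to Q_p\to Q_{p-1}\to 0$ as the crux, but you do not carry it out; the purity/weight argument you gesture at would have to explain precisely which graded pieces $\partial$ matches up, and since (with the correct $G^{\tn s}_s$) one has $G^{\tn s}_s(q)\neq G_s(q)$, neither ``$\partial=0$'' nor ``$\partial$ is an isomorphism in every degree'' is true. Without this, the inductive scheme does not close.

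The paper sidesteps both difficulties. For $n=2m$ it never touches the sequence $0\to D_p\to Q_p\to Q_{p-1}\to 0$ to compute $G_p$; instead it realizes $\langle\tn{Pf}^{-2k}\rangle_{\D}$ as a direct limit of twisted invariant ideals $I_{(m-k)\times e}\otimes_S\mathscr P_{-e-2k}$ (Lemma~\ref{limitpfaff}), computes $H^{\bullet}_{\{0\}}$ of these via graded local duality and the explicit $\tn{Ext}$-multiplicities of Lemma~\ref{multQuot} (which come from \cite{perlman2017regularity}), and then uses the short exact sequence $0\to\langle\tn{Pf}^{2(p-m+1)}\rangle_{\D}\to S_{\tn{Pf}}\to Q_p\to 0$ together with the elementary vanishing $H^{\bullet}_{\{0\}}(S_{\tn{Pf}})=0$ (Lemma~\ref{Pfvanish}) to read off $G_p(q)$ in Theorem~\ref{locQ}. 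Only \emph{after} that does it compute $G^{\tn s}_s(q)$ for $n=2m$ (Theorem~\ref{evenSimp}), by showing that the maps $H^{\bullet}_{\{0\}}(\langle\tn{Pf}^{-2k}\rangle_{\D})\to H^{\bullet}_{\{0\}}(\langle\tn{Pf}^{-2k-2}\rangle_{\D})$ are zero (Lemma~\ref{zeroes}, again via the $\tn{Ext}$ description). For $n=2m+1$, rather than computing IC costalks directly, the paper constructs a $\tn{GL}$-equivariant map $\pi\colon Y\to X$ from a bundle over $\tn{Gr}(2m,\C^{2m+1})$ whose fibres are spaces of $2m\times 2m$ skew matrices; the simple equivariant $\D_Y$-modules push forward to the $D_p$'s on $X$ in a single cohomological degree (Lemma~\ref{pushEven}), and the even-case formula for $G^{\tn s}_s$ then transfers to the odd case via two degenerate spectral sequences (Theorem~\ref{locOdd}). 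This route trades your unfinished $\partial$-analysis and costalk computation for commutative-algebraic $\tn{Ext}$ calculations that are already available from prior work.
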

\noindent We remark that, for all $n\geq 2$, the Pfaffian variety $\overline{O}_1$ is the affine cone over the Grassmannian $\tn{Gr}(2,\mathbb{C}^n)$, a smooth projective variety. Thus, the Lyubeznik numbers $\lambda_{i,j}(R^1)$ are determined by the algebraic de Rham cohomology of the Grassmannian \cite[Main Theorem 1.2]{switala2015lyubeznik}. Using said theorem, when $n=6$, the only nonvanishing Lyubeznik numbers are $\lambda_{0,5}(R^1)=\lambda_{5,9}(R^1)=\lambda_{9,9}(R^1)=1$. Alternatively, we may compute these using Theorem \ref{main}:
$$
L_1(q,w)=\sum_{s=0}^1 q^{s(2s+3)}\cdot \binom{2}{s}_{q^4}\cdot w^{5+4s}=w^5+q^5\cdot w^9+q^9\cdot w^9,
$$
recovering the previous Lyubeznik number computations in this case.

We discuss parallels between this work and the recent work of L\H{o}rincz-Raicu \cite{lHorincz2018iterated}, where the authors compute all iterations of local cohomology with support in generic determinantal varieties, yielding the Lyubeznik numbers for determinantal rings. The dichotomy in our case between even-sized and odd-sized skew-symmetric matrices is analogous to the dichotomy between square and non-square generic matrices. For integers $n\neq m$, the category of $\tn{GL}_n(\C)\times \tn{GL}_m(\C)$-equivariant coherent $\D$-modules on the space of $n\times m$ generic matrices is semi-simple, similar to the odd-sized skew-symmetric case. Also, just as in Theorem \ref{structure}, the local cohomology modules of the polynomial ring with support in determinantal varieties of square matrices decompose as a direct sum of analogues of the $Q_p$'s coming from the pole order filtration of the determinant hypersurface \cite[Theorem 1.6]{lHorincz2018iterated}. As such, our technique for proving Theorem \ref{structure} is similar to the proof of \cite[Theorem 1.6]{lHorincz2018iterated}. However, our methods for computing the local cohomology of the indecomposable modules with support in $\{0\}$ are different from the methods used in \cite{lHorincz2018iterated}. The authors use syzygy computations in their Section 3.2 for which the analogous results in the skew-symmetric case are unknown. In order to compute local cohomology $H^{\bullet}_{\{0\}}(D_s)$ and $H^{\bullet}_{\{0\}}(Q_p)$ in the case of even-sized skew-symmetric matrices, we instead use previous computations involving $\tn{Ext}^{\bullet}_S(S/I,S)$, where $I\subset S$ is a $\tn{GL}$-invariant ideal (see Section \ref{Perlman}). To compute $H^i_{\{0\}}(D_s)$ in the case when $n=2m+1$ is odd, we construct a birational isomorphism to $X$ from a space that locally looks like the product of the space of $2m\times 2m$ skew-symmetric matrices with an affine space. This allows us to ``push forward" the computations from the even case to the odd case.\\

\noindent \textbf{Organization.} In Section \ref{Prelim} we review the necessary background on representation theory, Gaussian binomial coefficients, $\D$-modules, and $\tn{GL}_n(\C)$-invariant ideals on $\bw^2\mathbb{C}^n$. In Section \ref{Local} we prove Theorem \ref{structure}, which we use in Section \ref{Even} to calculate the Lyubeznik numbers for Pfaffian rings of even-sized skew-symmetric matrices. Finally, we obtain the Lyubeznik numbers for the case of odd-sized skew-symmetric matrices in Section \ref{oddSect}.

\section{Preliminaries}\label{Prelim}
\subsection{Representation theory, Gaussian binomial coefficients, and Bott's theorem for Grassmannians}\label{Rep}
 Let $W$ be an $n$-dimensional complex vector space. The irreducible representations of $\tn{GL}=\text{GL}(W)$ are in one-to-one correspondence with dominant weights $\mu=(\mu_1\geq \cdots \geq \mu_n)\in \mathbb{Z}^n$. We denote the set of dominant weights with $n$ parts by $\mathbb{Z}^n_{\text{dom}}$. Given a dominant weight $\mu$, write $\bS_{\mu}W$ for the irreducible representation of highest weight $\mu$, where $\bS_{\mu}(-)$ is the Schur functor. If $M$ is a representation of $\tn{GL}$ which decomposes as a direct sum of irreducibles with finite multiplicity, we write $\langle M, \bS_{\lambda}W\rangle$ for the multiplicity of $\bS_{\lambda}W$ as a summand in $M$. When $\mu=(1,\cdots , 1)=(1^n)$, the representation $\bS_{\mu}W$ is equal to $\bw^n W$, which we will sometimes write as $\text{det}(W)$. Similarly, when $\mathcal{E}$ is a locally free sheaf, we sometimes write $\tn{det}(\mathcal{E})$ for the highest exterior power. Given dominant weights $\lambda, \mu\in \mathbb{Z}^n_{\tn{dom}}$, write $\lambda\geq \mu$ if $\lambda_i\geq \mu_i$ for all $1\leq i\leq n$. The size of a weight is $|\mu|=\mu_1+\cdots +\mu_n$. When $\mu_n\geq 0$, we refer to $\mu$ as a partition, and we will use underlined roman letters to distinguish partitions from arbitrary dominant weights. Denote by $\mathcal{P}(n)$ the set of partitions in $\mathbb{Z}^n_{\text{dom}}$. It is convenient to identify a partition $\underline{x}$ with the associated Young diagram:
$$
\underline{x}=(4,3,1,0)\;\; \longleftrightarrow \;\;\;\yng(4,3,1)
$$
\vspace{.01cm}

\noindent When we refer to a row or column of a partition $\underline{x}$, we mean the corresponding row or column of its associated Young diagram. Given $\lambda\in \mathbb{Z}^k_{\tn{dom}}$ we write
$$
\lambda^{(2)}=(\lambda_1,\lambda_1,\lambda_2,\lambda_2,\cdots)\in \mathbb{Z}^{2k}_{\tn{dom}}.
$$
For $\underline{z}\in \mathcal{P}(k)$, the Young diagram of $\underline{z}^{(2)}$ is obtained from the Young diagram of $\underline{z}$ by doubling the column lengths. When $\underline{x}$ is a rectangle, i.e. $\underline{x}=(b,\cdots ,b,0,\cdots ,0)=(b^a)$ for some $a,b\in \mathbb{N}$, we will sometimes write $\underline{x}=a\times b$. 

Given integers $a\geq b\geq 0$, the Gaussian binomial coefficient $\binom{a}{b}_q$ is a polynomial in $\mathbb{Z}[q]$ defined via
\begin{equation}
\binom{a}{b}_q=\frac{(1-q^a)\cdot (1-q^{a-1})\cdots (1-q^{a-b+1})}{(1-q^b)\cdot (1-q^{b-1})\cdots (1-q)},
\end{equation}
with the convention that $\binom{a}{a}_q=\binom{a}{0}_q=1$. These polynomials will be used throughout to state our computations concisely. In the proof of Lemma \ref{multQuot}, we will use that Gaussian binomial coefficients encode the number of partitions $\underline{x}$ of a given size that fit inside an $a\times b$ rectangle:
\begin{equation}\label{binomPart}
\binom{a+b}{b}_q=\sum_{\underline{x}\leq (b^a)}q^{|\underline{x}|}.
\end{equation}
In other words, the coefficient of $q^j$ in $\binom{a+b}{b}_q$ is the number of partitions of size $j$ that fit inside an $a\times b$ rectangle. In order to simplify our formulas, we will repeatedly use the binomial identity
\begin{equation}\label{binomIdent}
\binom{a}{b}_q=\binom{a-1}{b-1}_q+q^b\cdot \binom{a-1}{b}_q,\;\;\tn{ for $a>b>0$.}
\end{equation}
Finally, we recall the identity
\begin{equation}\label{binomInvert}
\binom{a}{b}_{q^{-1}}=q^{-b(a-b)}\cdot \binom{a}{b}_q.
\end{equation}
This will be used in the proof of Lemma \ref{swapOrder} and Lemma \ref{locPfaff}.

Let $V$ be an $n$-dimensional vector space, let $\mathbb{G}=\tn{Gr}(k,V)$ be the Grassmannian of $k$-dimensional subspaces of $V$, and let $\mathcal{R}$ and $\mathcal{Q}$ be the tautological sub and quotient sheaves of $V\otimes_{\mathbb{C}}\mathcal{O}_{\mathbb{G}}$ ($\tn{rank}(\mathcal{R})=k$, $\tn{rank}(\mathcal{Q})=n-k$). By Bott's theorem for Grassmannians \cite[Theorem 4.1.8]{weyman2003cohomology}, there is a method for computing the cohomology of bundles of the form $\bS_{\lambda}\mathcal{R}^{\ast}\otimes \bS_{\mu}\mathcal{Q}^{\ast}$ for $\lambda\in \mathbb{Z}^{k}_{\tn{dom}}$ and $\mu\in \mathbb{Z}^{n-k}_{\tn{dom}}$, which we now describe. Let $\rho=(n-1,n-2,\cdots, 1,0)$ and set $\gamma=(\lambda,\mu)=(\lambda_1,\cdots ,\lambda_k,\mu_1,\cdots, \mu_{n-k})$. Write $\tn{sort}(\rho+\gamma)$ for the element of $\mathbb{Z}^n$ obtained by putting the entries of $\rho+\gamma$ in non-increasing order, and write $\tilde{\gamma}=\tn{sort}(\rho+\gamma)-\rho$. If we write $\sigma$ for the minimal number of transpositions required to put $\rho+\gamma$ in non-increasing order, then
\begin{equation}\label{Bott}
H^j(\mathbb{G},\bS_{\lambda}\mathcal{R}^{\ast}\otimes \bS_{\mu}\mathcal{Q}^{\ast})=
\begin{cases}
\bS_{\tilde{\gamma}}V^{\ast} & \tn{if $\gamma+\rho$ has distinct entries and $j=\sigma$,}\\
0 & \tn{otherwise.}
\end{cases}
\end{equation}
This notation is used in Section \ref{vanish} to prove Theorem \ref{structure}, and Bott's theorem is used to prove Lemma \ref{pushEven}.

\subsection{The $\tn{GL}$-invariant ideals, the modules $J_{\underline{z},l}$, and Ext computations}\label{Perlman}

Let $W$ be an $n$-dimensional complex vector space, and let $S=\tn{Sym}(\bw^2 W)$ be the ring of polynomial functions on the space of $n\times n$ skew-symmetric matrices. This ring has a natural action of $\tn{GL}=\tn{GL}(W)$ and decomposes into irreducibles as follows \cite[Proposition 2.3.8]{weyman2003cohomology}:
\begin{equation}\label{cauchy}
S=\bigoplus_{\underline{z}\in \mathcal{P}(m)} \bS_{\underline{z}^{(2)}} W,
\end{equation}
where $m=\lfloor n/2 \rfloor$. Given $\underline{z}\in \mathcal{P}(m)$, we write $I_{\underline{z}}\subset S$ for the ideal generated by $\bS_{\underline{z}^{(2)}} W$. Note that
\begin{equation}\label{charI}
I_{\underline{z}}=\bigoplus_{\underline{x}\geq \underline{z}} \bS_{\underline{x}^{(2)}} W,
\end{equation}
so that 
\begin{equation}\label{contain}
I_{\underline{x}}\subseteq I_{\underline{z}}\;\;\;\iff \;\;\; \underline{z}\leq \underline{x}.
\end{equation}
Under the identification $S\cong \mathbb{C}[x_{i,j}]_{1\leq i<j\leq n}$, when $\underline{x}=(1,\cdots,1,0,\cdots,0)=(1^k)=k\times 1$, we have
\begin{equation}
I_{k\times 1}=\left\langle 2k\times 2k\textnormal{ Pfaffians of $(x_{i,j})$}\right\rangle,
\end{equation}
where $(x_{i,j})$ is the skew-symmetric matrix of indeterminates. The orbit closure $\overline{O}_k\subseteq \bw^2 W^{\ast}$ is defined by the prime ideal $I_{(k+1)\times 1}$, and the dimension is given by:
\begin{equation}\label{dimension}
\dim \overline{O}_k=\dim S/I_{(k+1)\times 1}=k(2n-2k-1).
\end{equation}
When $n=2m$ is even and $k=m-1$, the orbit closure $\overline{O}_{m-1}$ is defined by the $n\times n$ Pfaffian $\tn{Pf}\in S$. Note that $\tn{Pf}$ has weight $(1^n)$. When referring to the fractional ideal $S\cdot \tn{Pf}^{\; e}\subseteq \tn{Frac}(S)$ with $e\in \mathbb{Z}$, we will write 
\begin{equation}\label{fractional}
\mathscr{P}_{e}=S\cdot \tn{Pf}^{\; e}\subseteq \tn{Frac}(S).
\end{equation}
These fractional ideals behave well with respect to the tensor product with an invariant ideal $I_{\underline{z}}$: it follows from (\ref{charI}) that, for $\underline{z}\in \mathcal{P}(m)$, we have
\begin{equation}\label{twister}
I_{\underline{z}} \otimes_S \mathscr{P}_e \cong \bigoplus_{\substack{\nu\in \mathbb{Z}^m_{\tn{dom}}\\ \nu\geq \underline{z}+(e^m)}} \bS_{\nu^{(2)}}W.
\end{equation}
In particular, if $e\geq 0$, then $I_{\underline{z}}\otimes_S \mathscr{P}_e=I_{\underline{z}+(e^m)}$.

We now recall the definition of the subquotients $J_{\underline{z},l}$ introduced in \cite[Lemma 2.5]{raicu2016local}. We note that our notation for these modules is consistent with \cite{perlman2017regularity}, and our $J_{\underline{z},l}$ is equal to $J_{\underline{z}^{(2)},l}$ in \cite{raicu2016local}. For $\mathcal{X}\subset \mathcal{P}(m)$, write $I_{\mathcal{X}}=\sum_{\underline{x}\in \mathcal{X}} I_{\underline{x}}$. Given $0\leq l\leq m-1$ and $\underline{z}\in \mathcal{P}(m)$ with $z_1=\cdots =z_{l+1}$, we consider the collection of partitions obtained from $\underline{z}$ by adding boxes to its Young diagram in row $l+1$ or higher:
\begin{equation}\label{succ}
\mathfrak{succ}(\underline{z},l,m)=\left\{ \underline{y}\in \mathcal{P}(m) \mid \underline{y}\geq \underline{z}\;\; \textnormal{and} \;\; y_i>z_i\;\; \textnormal{for some}\;\; i>l\right\}.
\end{equation}
We write
\begin{equation}\label{J}
J_{\underline{z},l}=I_{\underline{z}}/I_{\mathfrak{succ}(\underline{z},l,m)}.
\end{equation}

All of the facts in the remainder of the subsection are a consequence of \cite[Main Theorem]{perlman2017regularity}. Given $\underline{z}\in \mathcal{P}(m)$, the $S$-module $S/I_{\underline{z}}$ has a $\tn{GL}$-equivariant filtration by $S$-modules with successive quotients $J_{\underline{x},p}$ for various $\underline{x}\in \mathcal{P}(m)$ and $0\leq p\leq m-1$, and the pairs $(\underline{x},p)$ for which the module $J_{\underline{x},p}$ appears (all with multiplicity one) as a subquotient of $S/I_{\underline{z}}$ are described by the following set
\begin{equation}\label{mainExt}
\mathcal{Z}(\underline{z})=\left\{ (\underline{x},z'_{c+1}-1)\mid 0\leq c\leq z_1-1,x_1=\cdots=x_{z'_{c+1}}=c,\tn{ and }\underline{z}(c)\leq \underline{x}\right\}.
\end{equation}
This filtration induces a degenerate spectral sequence for computing $\tn{Ext}^{\bullet}_S(S/I_{\underline{z}},S)$, yielding the following isomorphism of representations of $\tn{GL}$ (but not necessarily an isomorphism of $S$-modules):
\begin{equation}\label{degen}
\tn{Ext}^j_S(S/I_{\underline{z}},S)\cong \bigoplus_{(\underline{x},p)\in \mathcal{Z}(\underline{z})} \tn{Ext}^j_S(J_{\underline{x},p},S).
\end{equation}
Thus, the $\tn{GL}$-structure of $\tn{Ext}^j_S(S/I_{\underline{z}},S)$ is determined by the $\tn{GL}$-structure of $\tn{Ext}^j_S(J_{\underline{x},p},S)$ for $(\underline{x},p)\in \mathcal{Z}(\underline{z})$, which is explicitly described in \cite[Theorem 3.2]{perlman2017regularity}. 

For the argument in Theorem \ref{evenSimp}, we will need information about morphisms $\tn{Ext}^j_S(S/I_{\underline{y}},S)\to \tn{Ext}^j_S(S/I_{\underline{z}},S)$ induced by an inclusion $I_{\underline{z}}\subseteq I_{\underline{y}}$. By \cite[Main Theorem]{perlman2017regularity}, we have the following method to compute the (co)kernels of these maps:
\begin{equation}\label{kernel}
\tn{ker}\left( \tn{Ext}^j_S(S/I_{\underline{y}},S)\longrightarrow \tn{Ext}^j_S(S/I_{\underline{z}},S)\right)\cong\bigoplus_{(\underline{x},p)\in\mathcal{Z}(\underline{y})\setminus \mathcal{Z}(\underline{z})} \tn{Ext}^j_S(J_{\underline{x},p},S),
\end{equation}
\begin{equation}\label{cokernel}
\tn{coker}\left( \tn{Ext}^j_S(S/I_{\underline{y}},S)\longrightarrow \tn{Ext}^j_S(S/I_{\underline{z}},S)\right)\cong\bigoplus_{(\underline{x},p)\in\mathcal{Z}(\underline{z})\setminus \mathcal{Z}(\underline{y})} \tn{Ext}^j_S(J_{\underline{x},p},S),
\end{equation}
where the above isomorphisms are in the category of $\tn{GL}$-modules. We store the following for use in Section \ref{Local}.

\begin{lemma}\label{multQuot}
Let $n=2m$ be an even integer. We have the following for $a,b\in \mathbb{N}$ with $b\geq 2a-1$:
$$
\sum_{j\geq 0}\left\langle \textnormal{Ext}^{j}_S(S/I_{a\times b},S), \tn{det}(W^{\ast})^{\otimes (n+b-2a)}\right\rangle\cdot q^j=q^{a(2a-3)-m(4a-2m-3)+1}\cdot \binom{m-1}{a-1}_{q^4}.
$$
\end{lemma}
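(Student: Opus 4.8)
The plan is to run the reduction provided by the degenerate spectral sequence (\ref{degen}) and then read off the answer from the explicit $\tn{GL}$-decomposition of $\tn{Ext}^{\bullet}_S(J_{\underline{x},p},S)$ in \cite[Theorem 3.2]{perlman2017regularity}. Set $N=n+b-2a$; since $b\geq 2a-1$ we have $N\geq n-1\geq 1$. Applying (\ref{degen}) with $\underline{z}=a\times b$ gives
\[
\sum_{j\geq 0}\left\langle \tn{Ext}^{j}_S(S/I_{a\times b},S),\tn{det}(W^{\ast})^{\otimes N}\right\rangle q^j=\sum_{(\underline{x},p)\in\mathcal{Z}(a\times b)}\;\sum_{j\geq 0}\left\langle \tn{Ext}^{j}_S(J_{\underline{x},p},S),\tn{det}(W^{\ast})^{\otimes N}\right\rangle q^j,
\]
so it suffices to (i) describe $\mathcal{Z}(a\times b)$, (ii) decide for each $(\underline{x},p)$ whether the one-dimensional representation $\tn{det}(W^{\ast})^{\otimes N}$ occurs in $\tn{Ext}^{\bullet}_S(J_{\underline{x},p},S)$ and, if so, in which cohomological degree, and (iii) add up the resulting powers of $q$.

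For (i): the conjugate partition of $\underline{z}=a\times b$ has all of its first $b$ columns of length $a$, so (\ref{mainExt}) shows that every element of $\mathcal{Z}(a\times b)$ has the form $(\underline{x},a-1)$, where $x_1=\cdots=x_a=c$ for some $0\leq c\leq b-1$ and the tail $(x_{a+1},\dots,x_m)$ is constrained by $\underline{z}(c)\leq\underline{x}$. For (ii), the crucial input is that \cite[Theorem 3.2]{perlman2017regularity} writes $\tn{Ext}^{\bullet}_S(J_{\underline{x},a-1},S)$ as a multiplicity-free direct sum of Schur functors $\bS_{\mu}W^{\ast}$ indexed by the partitions $\mu$ in an explicit region, with each $\bS_{\mu}W^{\ast}$ concentrated in a single cohomological degree depending on $\mu$ and on $(\underline{x},a-1)$. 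Thus $\tn{det}(W^{\ast})^{\otimes N}$ occurs (with multiplicity $0$ or $1$, in a single degree) precisely when a certain rectangular weight lies in that region. I expect that, after solving this constraint, the pairs $(\underline{x},a-1)\in\mathcal{Z}(a\times b)$ that contribute are in bijection with partitions $\underline{y}$ fitting inside an $(m-a)\times(a-1)$ rectangle, and that the cohomological degree in which $\tn{det}(W^{\ast})^{\otimes N}$ then appears is
\[
j=\bigl(a(2a-3)-m(4a-2m-3)+1\bigr)+4\,|\underline{y}|.
\]

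Granting this, step (iii) is immediate: the left-hand side becomes $q^{\,a(2a-3)-m(4a-2m-3)+1}\cdot\sum_{\underline{y}\leq((a-1)^{m-a})}q^{4|\underline{y}|}$, and by (\ref{binomPart}), applied with the two box-dimensions equal to $m-a$ and $a-1$ and with $q$ replaced by $q^4$, the sum over $\underline{y}$ is exactly $\binom{m-1}{a-1}_{q^4}$, which is the claimed formula. To pin down the constant exponent it is enough to compute the degree for $\underline{y}=\varnothing$; one can also independently double-check the whole identity in the boundary cases $a=1$ (where $I_{1\times 1}=\mathfrak{m}$, so $S/I_{1\times 1}=\C$ and $\tn{Ext}^{\bullet}_S(\C,S)$ lives only in degree $\binom{n}{2}=m(2m-1)$, matching the exponent, while $\binom{m-1}{0}_{q^4}=1$) and $a=m$ (where $I_{m\times b}=\langle\tn{Pf}^{\,b}\rangle$ is principal, so $\tn{Ext}^{\bullet}_S(S/I_{m\times b},S)$ lives only in degree $1$, matching $a(2a-3)-m(4a-2m-3)+1=1$, while $\binom{m-1}{m-1}_{q^4}=1$).

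The main obstacle is step (ii): carefully matching the target rectangular weight (determined by $N=n+b-2a$) against the indexing region of \cite[Theorem 3.2]{perlman2017regularity} for each $J_{\underline{x},a-1}$, thereby determining which value(s) of $c$ and which tails $\underline{y}$ contribute, and tracking the unique cohomological degree in which the determinant power occurs; the remaining bookkeeping for the constant exponent $a(2a-3)-m(4a-2m-3)+1$ is routine but delicate. The hypothesis $b\geq 2a-1$ is what makes the set of contributing pairs (and the degree shift) independent of $b$, the entire $b$-dependence of the left-hand side being absorbed into the twist $N$.
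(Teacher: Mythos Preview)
Your outline matches the paper's proof exactly: reduce via (\ref{degen}) to the $J_{\underline{x},a-1}$'s indexed by $\mathcal{Z}(a\times b)$, read off the contribution of $\tn{det}(W^\ast)^{\otimes N}$ from \cite[Theorem 3.2]{perlman2017regularity}, and reassemble via (\ref{binomPart}). The sanity checks at $a=1$ and $a=m$ are correct, and your expected degree formula is consistent with the paper's (up to replacing the paper's indexing partition by its complement in the $(m-a)\times(a-1)$ box).

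The gap is that you do not carry out step (ii); you state the bijection with $\underline{y}\leq((a-1)^{m-a})$ and the degree formula as an expectation and then proceed under ``granting this''. This is precisely where all the work lies. In the paper's argument the constraint coming from \cite[Theorem 3.2]{perlman2017regularity} reads $\lambda_{2a-2+i-2t_i}=z^{(2)}_{2a-2+i}+n-1-2t_i=N$, forcing $z^{(2)}_{2a-2+i}=b-2a+1+2t_i$. Since any $\underline{t}\in\mathcal{T}_{a-1}(\underline{z})$ has $t_1=a-1$, this immediately pins $z_a=b-1$ and hence $c=b-1$; this is exactly the step you left open (``which value(s) of $c$ contribute''). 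The remaining conditions then yield a bijection between contributing pairs $(\underline{z},\underline{t})$ and partitions in an $(m-a)\times(a-1)$ box (via $\underline{t}\mapsto(t_3,t_5,\dots,t_{n-2a+1})$), with cohomological degree $\binom{n}{2}-\binom{2a-2}{2}-2|\underline{t}|$; here the hypothesis $b\geq 2a-1$ is what guarantees $z_{a+i}=b-2a+1+2t_{2i}\geq 0$, so that the recovered $\underline{z}$ is a genuine partition. Filling in these details turns your outline into a proof identical to the paper's.
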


\begin{proof}
By (\ref{mainExt}) and (\ref{degen}), it follows that 
\begin{equation}\label{sumJzl}
\left\langle \textnormal{Ext}^{\bullet}_S(S/I_{a\times b},S), \tn{det}(W^{\ast})^{\otimes (n+b-2a)}\right\rangle=\sum_{\substack{\underline{z}\in \mathcal{P}(m)\\ z_1=\cdots =z_a\leq b-1}} \left\langle \textnormal{Ext}^{\bullet}_S(J_{\underline{z},a-1},S), \tn{det}(W^{\ast})^{\otimes (n+b-2a)}\right\rangle.
\end{equation}
Thus, we need to compute the multiplicity of $\tn{det}(W^{\ast})^{\otimes (n+b-2a)}$ in $\tn{Ext}^j_S(J_{\underline{z},a-1},S)$ for all $\underline{z}\in \mathcal{P}(m)$ with $z_1=\cdots =z_a\leq b-1$. By \cite[Theorem 3.2]{perlman2017regularity} (referring the reader there for notation), we have 
$$
\tn{Ext}^j_S(J_{\underline{z},a-1},S)=\bigoplus_{\substack{ \mathcal{T}_{a-1}(\underline{z})\\ \binom{n}{2}-\binom{2a-2}{2}-2\sum_{i=1}^{n-2a+2}t_i=j\\ \lambda\in W(\underline{z},a-1,\underline{t})}} \bS_{\lambda}W^{\ast}.
$$
Given $\underline{t}\in \mathcal{T}_{a-1}(\underline{z})$, there exists $\lambda\in W(\underline{z},a-1,\underline{t})$ with $\bS_{\lambda}W^{\ast}=\tn{det}(W^{\ast})^{\otimes (n+b-2a)}$ if and only if
$$
\lambda_{2a-2+i-2t_i}=z_{2a-2+i}^{(2)}+n-1-2t_i=n+b-2a, \;\;\tn{for all $i=1,\cdots, n-2a+2$}.
$$
It follows that $z^{(2)}_{2a-2+i}$ must equal $b-2a+1+2t_i$ for all $i=1,\cdots, n-2a+2$. Since $t_1=a-1$, it follows that the multiplicity of $\tn{det}(W^{\ast})^{\otimes (n+b-2a)}$ in $\tn{Ext}^{\bullet}_S(J_{\underline{z},a-1},S)$ is zero unless $z_1=\cdots =z_a=b-1$. Now fix $\underline{z}\in \mathcal{P}(m)$ with $z_1=\cdots =z_a=b-1$. By the above discussion, we have that there exists a unique $\underline{t}\in \mathcal{T}_{a-1}(\underline{z})$ such that $((n+b-2a)^n)\in W(\underline{z},a-1,\underline{t})$, and thus
$$
\sum_{j\geq 0}\left\langle \tn{Ext}^j(J_{\underline{z},a-1},S), \tn{det}(W^{\ast})^{\otimes (n+b-2a)}\right\rangle\cdot q^j = q^{\binom{n}{2}-\binom{2a-2}{2}-2|\underline{t}|}.
$$
Given non-negative integers $c$ and $d$, write $\mathcal{P}(c;d)$ for the set of partitions $\underline{x}$ that fit inside a $c\times d$ rectangle ($\underline{x}\leq (d^c)$). We claim that for all $\underline{t}\in \mathcal{P}(n-2a+2;a-1)$ with $t_1=a-1$, there exists a unique $\underline{z}\in \mathcal{P}(m)$ with $z_1=\cdots =z_a=b-1$, $\underline{t}\in \mathcal{T}_{a-1}(\underline{z})$, and $((n+b-2a)^n)\in W(\underline{z},a-1,\underline{t})$. Indeed, given $\underline{t}\in \mathcal{P}(n-2a+2;a-1)$ with $t_1=a-1$, set $z_1=\cdots =z_a=b-1$ and $z_{a+i}=b-2a+1+2t_{2i}$ for $i=1,\cdots , m-a+1$. Then by the hypothesis on $b$ we have $\underline{z}\in \mathcal{P}(m)$, $\underline{t}\in \mathcal{T}_{a-1}(\underline{z})$, and $((n+b-2a)^n)\in W(\underline{z},a-1,\underline{t})$. It is clear that this choice of $\underline{z}$ was unique. By (\ref{sumJzl}) we conclude that
\begin{equation}\label{sumExtPrelim}
\sum_{j\geq 0}\left\langle \textnormal{Ext}^j_S(S/I_{a\times b},S), \tn{det}(W^{\ast})^{\otimes (n+b-2a)}\right\rangle\cdot q^j=\sum_{\substack{\underline{t}\in \mathcal{P}(n-2a+2;a-1)\\ t_1=a-1\\t_{2i}=t_{2i-1}}} q^{\binom{n}{2}-\binom{2a-2}{2}-2|\underline{t}|}
\end{equation}
There is a bijection between the set of $\underline{t}\in \mathcal{P}(n-2a+2;a-1)$ with $t_1=a-1$, $t_{2i}=t_{2i-1}$ for all $1\leq i\leq m-a+1$, and the set $\mathcal{P}(m-a;a-1)$ given by sending $\underline{t}$ to $(t_3,t_5,\cdots , t_{n-2a+1})$. Therefore, (\ref{sumExtPrelim}) is equal to 
$$
\sum_{\beta\in \mathcal{P}(m-a;a-1)}q^{\binom{n}{2}-\binom{2a-2}{2}-4(a-1)-4|\beta|}.
$$
Using (\ref{binomPart}), the result follows.
\end{proof}

\subsection{Characters of equivariant $\D$-modules on spaces of skew-symmetric matrices}\label{Dmodule}
Let $W$ be an $n$-dimensional complex vector space and let $S=\tn{Sym}(\bw^2 W)$. Write $\mathcal{D}$ for the ring of differential operators on $\bw^2W^{\ast}$, and write $\tn{mod}_{\tn{GL}}(\D)$ for the category of $\tn{GL}$-equivariant coherent $\D$-modules on $\bw^2W^{\ast}$. For ease of notation set $m=\lfloor n/2 \rfloor$. This category has $m+1$ simple objects $D_0=E, D_1,\cdots , D_m=S$, and via the Riemann-Hilbert correspondence the simple $D_s$ corresponds to the trivial local system on the orbit $O_s$ of skew-symmetric matrices of rank $2s$. When $n$ is even, the $n\times n$ Pfaffian $\tn{Pf}\in S$ has weight $(1^{n})$, so that by (\ref{cauchy}) we have
\begin{equation}\label{charLocal}
S_{\tn{Pf}}=\bigoplus_{\lambda\in \mathbb{Z}^m_{\tn{dom}}} \bS_{\lambda^{(2)}}W.
\end{equation}
The $\D$-module $S_{\tn{Pf}}$ has composition series (\ref{filtration}), with composition factors $D_0,\cdots, D_m$, each appearing with multiplicity one. Consider the sets
\begin{align*}
\mathcal{B}(s,2m) & = \left\{ \lambda\in \mathbb{Z}_{\textnormal{dom}}^{2m} \mid \lambda_{2s}\geq (2s-1), \lambda_{2s+1}\leq 2s, \lambda_{2i-1}=\lambda_{2i}\textnormal{ for all $i$} \right\},\\
\mathcal{B}(s,2m+1) & = \left\{ \lambda\in \mathbb{Z}_{\textnormal{dom}}^{2m+1} \mid \lambda_{2s+1}=2s, \lambda_{2i-1}=\lambda_{2i} \textnormal{ for }i\leq s, \lambda_{2i}=\lambda_{2i+1}\textnormal{ for $i>s$} \right\}.
\end{align*}
Then by \cite[Section 6]{raicu2016characters} we have that for $s=0,\cdots ,m$:
\begin{equation}\label{characters}
D_s\cong \bigoplus_{\lambda\in \mathcal{B}(m-s,n)} \bS_{\lambda}W^{\ast},
\end{equation}
In particular, $E=D_0$ is the only simple module containing the representation $\bS_{((n-1)^n)}W^{\ast}=\tn{det}(W^{\ast})^{\otimes (n-1)}$ and the multiplicity of this representation in $E$ is one. Note that, as a representation of $\tn{GL}$, we have $E=S^{\ast}\otimes_{\mathbb{C}}\tn{det}(W^{\ast})^{\otimes (n-1)}$, where $S^{\ast}=\oplus_{d\leq 0} (S_d)^{\ast}$. It is important to mention that our grading conventions on $E$ are different than the conventions on $^*E$ in \cite[Chapter 13, Chapter 14]{brodman1998local}. Let $d=\dim S=\binom{n}{2}$. In our case, $E_t=(S_{d+t})^{\ast}\otimes_{\mathbb{C}}\tn{det}(W^{\ast})^{\otimes (n-1)}$, so $E=\;^*E(d)$.

We will use the descriptions (\ref{characters}) in the proof of Lemma \ref{limitpfaff} and Section \ref{oddSect}. When $n=2m$ is even, the module $\langle \tn{Pf}^{-2k}\rangle_{\D}$ from the introduction has composition factors $S=D_m, D_{m-1},\cdots , D_{m-k}$, each with multiplicity one. Using (\ref{characters}), we obtain
\begin{equation}\label{charPf}
\langle \tn{Pf}^{-2k}\rangle_{\D}\cong \bigoplus_{\substack{\lambda\in \mathbb{Z}^{2m}_{\tn{dom}}\\ \lambda_{2k+1}\leq 2k\\ \lambda_{2i-1}=\lambda_{2i} \tn{for all $i$}}} \bS_{\lambda}W^{\ast}.
\end{equation}

\subsection{Some binomial identities}
In this subsection, we use the binomial identities in Section \ref{Rep} to simplify expressions that appear later in the article.
\begin{lemma}\label{Qdecomp}
Let $n=2m$ and $0\leq k\leq m-2$. The following formula holds in $\Gamma_{\D}[q]$:
$$
\sum_{j\geq 0}\left[H^j_{\overline{O}_k}(S)\right]_{\D}\cdot q^j=\sum_{p=0}^k [Q_p]_{\D}\cdot q^{2(m-k)^2-(m-k)+4(k-p)}\cdot \binom{m-p-2}{k-p}_{q^4}.
$$
\end{lemma}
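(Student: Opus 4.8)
The statement is the combinatorial repackaging of the Raicu--Weyman description of the $\D_X$-module structure of $H^\bullet_{\overline{O}_k}(S)$, so the plan has two parts: (i) record the output of \cite[Main Theorem]{raicu2016local} in the basis $\{[D_s]_\D\}_{s=0}^m$ of $\Gamma_\D$, and (ii) change to the basis $\{[Q_p]_\D\}$ by a Gaussian-binomial identity. For part (i): in the even case $n=2m$, after normalizing by the codimension shift, \cite[Main Theorem]{raicu2016local} yields in $\Gamma_\D[q]$
\[
\sum_{j\geq 0}\left[H^j_{\overline{O}_k}(S)\right]_\D\cdot q^j \;=\; \sum_{s=0}^{k}[D_s]_\D\cdot q^{\,c}\cdot\binom{m-s-1}{k-s}_{q^4},\qquad c=\binom{n}{2}-\dim\overline{O}_k=2(m-k)^2-(m-k),
\]
the last equality being (\ref{dimension}) specialized to $n=2m$. (Only the $D_s$ with $s\leq k$ occur, and each such $D_s$ already appears in the bottom cohomological degree $c$, consistently with the hypersurface case $k=m-1$, where $H^c=H^1=S_{\tn{Pf}}/S=Q_{m-1}$.)

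For part (ii): since $Q_p$ has composition factors $D_0,\dots,D_p$, each with multiplicity one, we have $[Q_p]_\D=\sum_{s=0}^p[D_s]_\D$ in $\Gamma_\D$, so the right-hand side of the lemma equals
\[
\sum_{p=0}^{k}\;\sum_{s=0}^{p}[D_s]_\D\cdot q^{\,c+4(k-p)}\binom{m-p-2}{k-p}_{q^4}\;=\;\sum_{s=0}^{k}[D_s]_\D\cdot q^{\,c}\cdot\sum_{p=s}^{k}q^{4(k-p)}\binom{m-p-2}{k-p}_{q^4}.
\]
Comparing with the display in part (i), the lemma reduces to the scalar identity, valid for $0\leq s\leq k\leq m-2$,
\[
\sum_{p=s}^{k}q^{4(k-p)}\binom{m-p-2}{k-p}_{q^4}\;=\;\binom{m-s-1}{k-s}_{q^4}.
\]
Setting $u=q^4$, $i=k-p$, $r=m-k-2$ (so $r\geq 0$, since $k\leq m-2$), and $N=k-s$, this is the instance $\sum_{i=0}^{N}u^i\binom{r+i}{i}_u=\binom{r+N+1}{N}_u$ of the $q$-hockey-stick identity, which I would prove by induction on $N$: the case $N=0$ is $1=1$, and for $N\geq 1$ one expands $\binom{r+N+1}{N}_u=\binom{r+N}{N-1}_u+u^N\binom{r+N}{N}_u$ by (\ref{binomIdent}) and rewrites $\binom{r+N}{N-1}_u=\sum_{i=0}^{N-1}u^i\binom{r+i}{i}_u$ by the inductive hypothesis.

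\textbf{Main obstacle.} The combinatorics in part (ii) is routine; the substantive step is part (i), namely reading off \cite[Main Theorem]{raicu2016local} precisely --- which simples $D_s$ occur in $H^j_{\overline{O}_k}(S)$, with which $q$-graded multiplicities, and the exact overall cohomological shift, which here is the codimension $c=2(m-k)^2-(m-k)$ obtained from (\ref{dimension}). One should also separately verify the degenerate value $k=m-1$ (excluded from this lemma, but recorded just before it, where the sum collapses to $[Q_{m-1}]_\D\cdot q$), and check the edge case $s=k$ of the identity above, which is $1=1$. Once part (i) is available, assembling the stated formula is exactly the hockey-stick computation described.
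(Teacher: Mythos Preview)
Your proposal is correct and follows essentially the same route as the paper: both expand the right-hand side in the $[D_s]_{\D}$ basis using $[Q_p]_{\D}=\sum_{s\le p}[D_s]_{\D}$, reduce to the scalar identity $\sum_{p=s}^{k}q^{4(k-p)}\binom{m-p-2}{k-p}_{q^4}=\binom{m-s-1}{k-s}_{q^4}$, establish it via the recursion (\ref{binomIdent}) (the paper by a direct telescoping, you by the equivalent inductive ``$q$-hockey-stick'' argument), and then invoke \cite[Main Theorem]{raicu2016local} to identify the resulting $[D_s]$-expansion with $\sum_j[H^j_{\overline{O}_k}(S)]_{\D}\,q^j$. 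Your computation of the codimension shift $c=2(m-k)^2-(m-k)$ from (\ref{dimension}) is also exactly what is needed.
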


\begin{proof}
As $[Q_p]_{\D}=\sum_{s=0}^p[D_s]_{\D}$, we have 
$$
\sum_{p=0}^k [Q_p]_{\D}\cdot q^{2(m-k)^2-(m-k)+4(k-p)}\cdot \binom{m-p-2}{k-p}_{q^4}=\sum_{s=0}^k \left([D_s]_{\D}\cdot q^{2(m-k)^2-(m-k)}\cdot  \sum_{p=s}^k q^{4(k-p)}\binom{m-p-2}{k-p}_{q^4}\right).
$$
Using the binomial identity (\ref{binomIdent}), it follows that for $k\leq m-2$:
$$
 \sum_{p=s}^k q^{4(k-p)}\binom{m-p-2}{k-p}_{q^4}=\sum_{p=s}^{k-1}\left( \binom{m-p-1}{k-p}_{q^4}-\binom{m-p-2}{k-p-1}_{q^4}\right)+\binom{m-k-2}{0}_{q^4}=\binom{m-s-1}{k-s}_{q^4}.
$$
By \cite[Main Theorem]{raicu2016local}, the result follows.
\end{proof}

\begin{lemma}\label{swapOrder}
Let $m=\lfloor n/2 \rfloor$ and let $d=\binom{n}{2}$. The following holds in $\Gamma_{\D}[q]$:
$$
\sum_{0\leq j\leq d} \left[H^{d-j}_{\overline{O}_k}(S)\right]_{\D}\cdot q^{j}=
\begin{cases}
\displaystyle \sum_{p=0}^k [Q_p]_{\D}\cdot q^{k(2k+3)-4p(k-m+1)}\cdot \binom{m-p-2}{k-p}_{q^4} & \tn{ for $n$ even and $0\leq k\leq m-2$,}\\
\displaystyle \sum_{p=0}^k [D_p]_{\D}\cdot q^{k(2k+3)-2p(2k-2m+1)}\cdot \binom{m-p-1}{k-p}_{q^4} & \tn{for $n$ odd and $0\leq k \leq m-1$.}
\end{cases}
$$
\end{lemma}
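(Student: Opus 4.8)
The plan is to recognize the left-hand side as the \emph{degree-reversal} of the generating function computed in Lemma~\ref{Qdecomp} (and its odd-$n$ counterpart), and then to simplify using the inversion identity (\ref{binomInvert}). Write $F_k(q)=\sum_{j\ge 0}[H^j_{\overline{O}_k}(S)]_{\D}\cdot q^j$. Since $S$ is a polynomial ring of dimension $d=\binom{n}{2}$, Grothendieck vanishing gives $H^j_{\overline{O}_k}(S)=0$ for $j>d$ (and trivially for $j<0$), so $F_k$ is a polynomial of degree at most $d$; re-indexing the finite sum by $i=d-j$ yields
$$\sum_{0\le j\le d}\left[H^{d-j}_{\overline{O}_k}(S)\right]_{\D}\cdot q^{j}=q^{d}\cdot F_k(q^{-1}).$$
So it remains to evaluate $q^{d}\cdot F_k(q^{-1})$ in each parity.

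For $n=2m$, Lemma~\ref{Qdecomp} gives the expansion of $F_k(q)$. Substituting $q\mapsto q^{-1}$, multiplying by $q^{d}$, and rewriting $\binom{m-p-2}{k-p}_{q^{-4}}=q^{-4(k-p)(m-k-2)}\binom{m-p-2}{k-p}_{q^{4}}$ via (\ref{binomInvert}) (with $q^{4}$ in place of $q$), the power of $q$ attached to $[Q_p]_{\D}\cdot\binom{m-p-2}{k-p}_{q^{4}}$ becomes $d-2(m-k)^2+(m-k)-4(k-p)(m-k-1)$. Separating the $p$-linear part, this is $\big(d-2(m-k)^2+(m-k)-4k(m-k-1)\big)+4p(m-k-1)$, and since $4p(m-k-1)=-4p(k-m+1)$ the claim reduces to the scalar identity $d-2(m-k)^2+(m-k)-4k(m-k-1)=k(2k+3)$. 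Setting $k=m-j$, both sides expand to $2m^2+3m-4mj+2j^2-3j$, so this holds.

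For $n=2m+1$ the category $\tn{mod}_{\tn{GL}}(\D)$ is semisimple, and \cite[Main Theorem]{raicu2016local}, after a binomial manipulation parallel to Lemma~\ref{Qdecomp}, gives
$$F_k(q)=\sum_{p=0}^{k}[D_p]_{\D}\cdot q^{\,2(m-k)^2+(m-k)+2(k-p)}\cdot\binom{m-p-1}{k-p}_{q^{4}}.$$
Running the identical three steps, and using $\binom{m-p-1}{k-p}_{q^{-4}}=q^{-4(k-p)(m-k-1)}\binom{m-p-1}{k-p}_{q^{4}}$, the power of $q$ on $[D_p]_{\D}\cdot\binom{m-p-1}{k-p}_{q^{4}}$ becomes $\big(d-2(m-k)^2-(m-k)-2k(2m-2k-1)\big)+2p(2m-2k-1)$, where $2p(2m-2k-1)=-2p(2k-2m+1)$. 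The claim now follows from $d-2(m-k)^2-(m-k)-2k(2m-2k-1)=k(2k+3)$, which again reduces to $2m^2+3m-4mj+2j^2-3j=2m^2+3m-4mj+2j^2-3j$ under the substitution $k=m-j$.

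The argument is thus a formal ``reverse and invert'' together with two elementary polynomial identities; the only thing requiring genuine care beforehand is establishing the odd-$n$ expansion of $F_k(q)$ used above, which I would carry out exactly as in the proof of Lemma~\ref{Qdecomp} (via \cite[Main Theorem]{raicu2016local} and the identity (\ref{binomIdent})). I do not anticipate any further obstacle.
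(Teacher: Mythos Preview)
Your proposal is correct and follows essentially the same route as the paper: both arguments recognize the left-hand side as $q^{d}\cdot F_k(q^{-1})$, invoke Lemma~\ref{Qdecomp} (resp.\ \cite[Main Theorem]{raicu2016local}) for the shape of $F_k$, and then reduce via the inversion identity (\ref{binomInvert}) to an elementary check of the exponent. Your write-up is in fact more explicit than the paper's, which suppresses the verification of the polynomial identity and the entire odd case; one minor remark is that in the odd case the expansion of $F_k(q)$ in the $[D_p]_{\D}$ basis is read off directly from \cite[Main Theorem]{raicu2016local} rather than via a further binomial manipulation, so the phrase ``parallel to Lemma~\ref{Qdecomp}'' slightly overstates what is needed.
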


\begin{proof}
Assume that $n$ is even, and write $\sum_{j\geq 0}[H^j_{\overline{O}_k}(S)]_{\D}\cdot q^j=\sum_{p=0}^k [Q_p]_{\D}\cdot P_{m,k,p}(q)$, where $P_{m,k,p}(q)\in \mathbb{Z}[q]$ are described in Lemma \ref{Qdecomp}. It follows that
$$
\sum_{0\leq j\leq d} \left[H^{d-j}_{\overline{O}_k}(S)\right]_{\D}\cdot q^{j}=\sum_{p=0}^k [Q_p]_{\D}\cdot q^d\cdot P_{m,k,p}(q^{-1}).
$$
By the binomial identity (\ref{binomInvert}), it follows that 
$$
q^d\cdot P_{m,k,p}(q^{-1})=q^{d-2(m-k)^2+(m-k)-4(k-p)-4(k-p)(m-k-2)}\cdot \binom{m-p-2}{k-p}_{q^4}=q^{k(2k+3)-4p(k-m+1)}\cdot \binom{m-p-2}{k-p}_{q^4},
$$
completing the proof for $n$ even. The proof for $n$ odd is similar and uses \cite[Main Theorem]{raicu2016local}.
\end{proof}

\section{The structure of local cohomology with support in Pfaffian varieties of even-sized skew-symmetric matrices}\label{Local}
Throughout this section, let $n=2m$ be even, let $W$ be an $n$-dimensional complex vector space, and let $S=\tn{Sym}(\bw^2 W)$. This section is dedicated to proving Theorem \ref{structure}, the fact that the local cohomology modules $H^j_{\overline{O}_k}(S)$ are direct sums of the modules $Q_0,\cdots , Q_k$. 

\subsection{Flag varieties and the relative setting}\label{relative}

Given an integer $0\leq p\leq n$, write $\mathbb{F}([p,n];W)$ for the variety of partial flags
$$
W_{\bullet}:\;\;W=W_n\twoheadrightarrow W_{n-1}\cdots \twoheadrightarrow W_p\twoheadrightarrow 0,
$$
where each $W_q$ is a quotient of $W$ with $\dim W_q=q$. Given $q\in[p,n]$, denote by $\mathcal{Q}_q$ for the tautological rank $q$ quotient sheaf on $\mathbb{F}([p,n];W)$. The fiber of $\mathcal{Q}_q$ over $W_{\bullet}\in \mathbb{F}([p,n];W)$ is $W_q$. There are natural projection maps
$\mathbb{F}([p,n];W) \to \mathbb{F}([p+1,n];W)$ defined by forgetting $W_p$ from the flag $W_{\bullet}$. When $p\leq n-1$, this map identifies $\mathbb{F}([p,n];W)$ with the projective bundle $\mathbb{P}_{\mathbb{F}([p+1,n];W)}(\mathcal{Q}_{p+1})$, which comes with a tautological surjection $\mathcal{Q}_{p+1}\twoheadrightarrow \mathcal{Q}_p$. The kernel of this map is a line bundle, which we denote by $\mathcal{L}_{p+1}$. When $p>0$, the Picard group of $\mathbb{F}([p,n];W)$ is free of rank $(n-p)$, unless $p=0$, in which case the Picard group has rank $n-1$. For $p\geq 1$, we write the following for the line bundle corresponding to $\mu\in \mathbb{Z}^{n-p}$:
$$
\mathcal{L}^{\mu}=\bigotimes_{i=1}^{n-p} \mathcal{L}_{p+i}^{\otimes \mu_i}.
$$
We now introduce relative versions of the invariant ideals $I_{\underline{z}}\subset S$ and the modules $J_{\underline{z},l}$ (see Section \ref{Perlman}). Given an integer $k$ with $0\leq k\leq m$, we write $\mathbb{F}(k)$ for the partial flag variety $\mathbb{F}([2k,n];W)$, and recall the tautological rank $2k$ quotient sheaf $\mathcal{Q}_{2k}$.  We write $Z(k)$ for the geometric vector bundle on $\mathbb{F}(k)$ associated to the locally free sheaf $\bw^2 \mathcal{Q}_{2k}^{\ast}$, with structure map $q:Z(k)\to \mathbb{F}(k)$. Then $q_{\ast}\mathcal{O}_{Z(k)}=\mathcal{S}^k$, where $\mathcal{S}^k$ is the sheaf of algebras
\begin{equation}
\mathcal{S}^{k}=\textnormal{Sym}_{\mathbb{F}(k)}\left(\bw^2\mathcal{Q}_{2k}\right)=\bigoplus_{\underline{x}\in \mathcal{P}(k)} \bS_{\underline{x}^{(2)}}\mathcal{Q}_{2k},
\end{equation}
where the last equality follows from (\ref{cauchy}). For $\underline{x}\in \mathcal{P}(k)$, let $\mathcal{I}_{\underline{x},k}$ denote the ideal in $\mathcal{S}^{k}$ generated by $\bS_{\underline{x}^{(2)}}\mathcal{Q}_{2k}$, and define for $\mathcal{X}\subseteq \mathcal{P}(k)$ the ideal
\begin{equation}
\mathcal{I}_{\mathcal{X},k}=\sum_{\underline{x}\in \mathcal{X}}\mathcal{I}_{\underline{x},k}.
\end{equation}
These ideals satisfy the analogues of (\ref{charI}) and (\ref{contain}). Define the $\mathcal{S}^{k}$-modules
\begin{equation}
\mathcal{J}_{\underline{z},p,k}=\mathcal{I}_{\underline{z},k}/\mathcal{I}_{\mathfrak{succ}(\underline{z},p,k),k}.
\end{equation}
When $k=m$, then $\mathcal{S}^{k}=S=\tn{Sym}(\bw^2 W)$ is the ring of polynomial functions on the space of $n\times n$ skew-symmetric matrices, and $I_{\underline{x}}=\mathcal{I}_{\underline{x},k}$ and $J_{\underline{z},p}=\mathcal{J}_{\underline{z},p,k}$. 

We will denote by $\textnormal{det}^{(k)}$ the line bundle $\textnormal{det}(\mathcal{Q}_{2k})$. Note that $\mathcal{J}_{\underline{z},p,k}\otimes \textnormal{det}^{(k)}=\mathcal{J}_{\underline{z}+(1^k),p,k}$. Let $\lambda\in \mathbb{Z}^k_{\textnormal{dom}}$. If $\lambda=\underline{x}-(d^k)$ for some $d\geq 0$, $\underline{x}\in \mathcal{P}(k)$, write
\begin{equation}\label{Jtwist}
\mathcal{J}_{\lambda,p,k}=\mathcal{J}_{\underline{x},p,k}\otimes (\textnormal{det}^{(k)})^{\otimes(-d)}.
\end{equation}
For $\lambda\in \mathbb{Z}^k_{\textnormal{dom}}$, $0\leq p\leq k$, and $\mu\in \mathbb{Z}^{n-2k}$ define
\begin{equation}
\mathcal{M}^{k}_{\lambda,p,\mu}=\mathcal{J}_{\lambda,p,k}\otimes_{\mathcal{O}_{\mathbb{F}(k)}}\mathcal{L}^{\mu}.
\end{equation}
For $\underline{y}\in \mathcal{P}(n-2k)$, and $d\geq y_1$, define $\underline{x}\in \mathcal{P}(m)$ by $x_1=\cdots =x_k=d$, and $(x_{k+1},\cdots, x_m)^{(2)}=\underline{y}$. For all $\mathbb{S}_{\underline{z}}\mathcal{Q}_{2k}\subseteq \mathcal{J}_{(d^k),k,k}$, the concatenation $(\underline{z},\underline{y})\in \mathbb{Z}^n$ is dominant. Thus, using \cite[Theorem 2.1(b)]{lHorincz2018iterated} and following the proof of \cite[Lemma 3.2]{genericLocal} we obtain the following isomorphism of $S$-modules:
\begin{equation}\label{pushM}
H^{q}\left(\mathbb{F}(k), \mathcal{M}^{k}_{(d^k),k,\underline{y}}\right)=
\begin{cases}
J_{\underline{x},k} & q=0,\\
0 & \textnormal{otherwise.}
\end{cases}
\end{equation}

\subsection{Reduction to a vanishing statement}
In this subsection, let $X=\bw^2W^{\ast}$ be the space of $n\times n$ complex skew-symmetric matrices and identify $S=\tn{Sym}(\bw^2 W)\cong \C[x_{i,j}]_{1\leq i<j\leq n}$. Let $X_1\subset X$ be the basic open affine subset consisting of skew-symmetric matrices with $x_{1,2}\neq 0$, and let $X'$ be the space of $(n-2)\times (n-2)$ skew-symmetric matrices. Write $O'_k$ for the orbit of rank $k$ matrices in $X'$, and write $Q_p'$ for the $\D_{X'}$-modules analogous to (\ref{Q}). Just as in \cite[Section 2.8]{lHorincz2018iterated}, there is a projection $\pi:X_1\to X'$ satisfying $\pi^{-1}(O_k')=O_{k+1}\cap X_1$ for all $k=0,\cdots, m-1$, and 
\begin{equation}\label{restrictQ}
\pi^{\ast}(Q_p')=(Q_{p+1})|_{X_1}=(Q_{p+1})_{x_{1,2}} \tn{ for all $p=0,\cdots, m-1$},
\end{equation}
where the subscript $x_{1,2}$ denote localization at that variable. The projection $\pi$ comes from applying simultaneous row and column operations to eliminate the first two rows and first two columns of an element in $X_1$, and more details may be found in \cite[Lemma 1.2]{jozefiak1979ideals}. Further, writing $S'=\C[x_{i,j}']_{3\leq i<j\leq n}$ for the coordinate ring of $X'$, we have
\begin{equation}\label{restrictH}
\pi^{\ast}(H^j_{\overline{O}_k'}(S'))=H^j_{\overline{O}_{k+1}\cap X_1}(S_{x_{1,2}})=\left(H^j_{\overline{O}_{k+1}}(S)\right)|_{X_1} \tn{ for all $k=0,\cdots, m-1$, and $j\geq 0$}.
\end{equation}
Using the above discussion, we follow the proof of \cite[Proposition 6.8]{lHorincz2018iterated} via induction on $n$ even. The proof there relies on their Lemma's 6.3 - 6.6 and analogues to the equations (\ref{restrictQ}) and (\ref{restrictH}) above. Versions of those lemmas in the skew-symmetric case follow immediately from the fact that $\tn{mod}_{\tn{GL}}(\D)$ is equivalent to the category of $\tn{GL}_{m}(\C)\times \tn{GL}_{m}(\C)$-equivariant coherent $\D$-modules on the space of $m\times m$ generic matrices \cite[Theorem 5.4, Theorem 5.7]{lHorincz2018categories}, and under this equivalence, each $Q_p$ is identified with the module referred to by $Q_p$ in \cite{lHorincz2018iterated}. Let $U=X\setminus \{0\}$ with open immersion $j:U\hookrightarrow X$. As in the end of the proof of \cite[Proposition 6.8]{lHorincz2018iterated}, we have an exact sequence for all $i\geq 0$:
\begin{equation}\label{importantSequence}
0\longrightarrow H^0_{\{0\}}H^i_{\overline{O}_k}(S)\longrightarrow H^i_{\overline{O}_k}(S)\longrightarrow j_{\ast}j^{\ast}H^i_{\overline{O}_k}(S)\longrightarrow H^1_{\{0\}}H^i_{\overline{O}_k}(S)\longrightarrow 0,
\end{equation}
where $j_{\ast}j^{\ast}H^i_{\overline{O}_k}(S)$ is a direct sum of the modules $Q_0,Q_1,\cdots, Q_{m-1}$. Since $H^0_{\{0\}}H^i_{\overline{O}_k}(S)$ is supported on $\{0\}$, it is a direct sum of copies of $E=Q_0$. By \cite[Lemma 6.6]{lHorincz2018iterated}, to prove Theorem \ref{structure}, it suffices to show that $H^1_{\{0\}}H^i_{\overline{O}_k}(S)=0$ for all $i\geq 0$. We verify this vanishing in the next subsection. 

\subsection{Vanishing of local cohomology for the subquotients $J_{\underline{z},p}$}\label{vanish}

Throughout this subsection, $n$ is assumed to be even: $n=2m$, and we continue to write $X=\bw^2 W^{\ast}$, where $W$ is an $n$-dimensional vector space. We will prove the desired vanishing $H^1_{\{0\}}H^i_{\overline{O}_k}(S)=0$ for all $i\geq 0$. In order to do this, we reduce to showing the vanishing $H^1_{\{0\}}(\textnormal{Ext}^j_S(J_{\underline{x},l},S))=0$ for all $0\leq l \leq m$, for all $j\geq 0$, and $\underline{x}\in \mathcal{P}(m)$ with $x_1=\cdots=x_{l+1}$. Using the notation in Section \ref{Perlman}, the defining ideal of $\overline{O}_k$ is $I_{(k+1)\times 1}$, the ideal of $(2k+2)\times (2k+2)$ Pfaffians of the skew-symmetric matrix of indeterminates $(x_{i,j})$. The system of ideals $\{I_{(k+1)\times e}\}_{e\geq 1}$ is cofinal to the collection $\{I_{(k+1)\times 1}^d\}_{d\geq 1}$. By \cite[Remark 7.9]{iyengar2007twenty} we obtain
\begin{equation}\label{limLoc}
H^1_{\{0\}}H^i_{\overline{O}_k}(S)=H^1_{\{0\}}\left(\lim_{e\to \infty}\tn{Ext}^i_S(S/I_{(k+1)\times e},S)\right)=\lim_{e\to \infty}H^1_{\{0\}}(\tn{Ext}^i_S(S/I_{(k+1)\times e},S)).
\end{equation}

\noindent Therefore, by the discussion in Section \ref{Perlman}, to show that $H^1_{\{0\}}H^i_{\overline{O}_k}(S)=0$, it suffices to prove the following:
\begin{theorem}\label{vanish}
Suppose that $0\leq l\leq m$ and that $\underline{x}\in \mathcal{P}(m)$ with $x_1=\cdots=x_{l+1}$. Then for all $j\geq 0$
$$
H^1_{\{0\}}(\textnormal{Ext}^j_S(J_{\underline{x},l},S))=0.
$$
\end{theorem}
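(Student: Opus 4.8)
The strategy is to reduce the vanishing of $H^1_{\{0\}}(\mathrm{Ext}^j_S(J_{\underline{x},l},S))$ to a cohomological vanishing statement on the partial flag variety $\mathbb{F}(l)$ via the geometric realization of $J_{\underline{x},l}$ developed in Section \ref{relative}. Concretely, I would first express $J_{\underline{x},l}$ as a higher direct image of a relative module $\mathcal{M}^l_{(d^l),l,\underline{y}}$ on the vector bundle $Z(l)\to \mathbb{F}(l)$ using the identification (\ref{pushM}): writing $x_1=\cdots=x_{l+1}=d$ and $(x_{l+1},\cdots,x_m)^{(2)}=\underline{y}$, we have $J_{\underline{x},l}=H^0(\mathbb{F}(l),\mathcal{M}^l_{(d^l),l,\underline{y}})$ with all higher cohomology vanishing. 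Then I would use Grothendieck duality / local duality together with this geometric description to rewrite $\mathrm{Ext}^j_S(J_{\underline{x},l},S)$, and hence $H^1_{\{0\}}(\mathrm{Ext}^j_S(J_{\underline{x},l},S))$, in terms of sheaf cohomology on $\mathbb{F}(l)$ of twists of the $\mathcal{J}_{\lambda,p,l}$'s (relative Ext, computed fiberwise on $Z(l)$ where everything is governed by the explicit formulas of \cite[Theorem 3.2]{perlman2017regularity}, tensored with line bundles $\mathcal{L}^\mu$ pulled back from the flag).

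The key point is that $H^1_{\{0\}}$ of a graded $S$-module vanishes precisely when the module has no socle in the relevant sense and, by the exact sequence relating $H^\bullet_{\{0\}}$ to the sections-over-the-punctured-cone functor, this becomes a statement that a certain map of sheaf cohomology groups on the punctured bundle $Z(l)\setminus(\text{zero section})$ — equivalently, a Bott-type vanishing on $\mathbb{F}(l)$ — has no cokernel in degree relevant to $H^1$. The mechanics here run parallel to the reduction already carried out: using (\ref{limLoc}) we only need the claim for the building blocks $J_{\underline{x},l}$, and for those I would run the projective-bundle structure $\mathbb{F}([p,n];W)=\mathbb{P}_{\mathbb{F}([p+1,n];W)}(\mathcal{Q}_{p+1})$ iteratively, at each stage applying the relative version of (\ref{Bott}) to compute $R^\bullet$ of the relevant twisted sheaf and checking that the $H^1$-contribution is forced to vanish by the distinctness/non-distinctness condition on $\gamma+\rho$.

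The main obstacle I anticipate is \emph{bookkeeping the twists correctly}: tracking which dominant weight $\gamma=(\lambda,\mu)$ arises on $\mathbb{F}(l)$ after dualizing, and verifying that for \emph{every} such $\gamma$ appearing in $\mathrm{Ext}^j_S(J_{\underline{x},l},S)$ — not just generic ones — the associated cohomology is concentrated in a single degree in a way that kills $H^1_{\{0\}}$. In particular one must handle the boundary cases $l=0$ (where $\mathrm{Pic}$ drops rank, so the line bundles $\mathcal{L}^\mu$ behave differently) and $l=m$ (where $\mathbb{F}(m)$ is a point and $J_{\underline{x},m}$ is just $S/I_{\underline{x}}$-type, so the claim is that $H^1_{\{0\}}(\mathrm{Ext}^j_S(S/I_{\underline{x}},S))=0$, which should follow directly from the depth/Cohen–Macaulayness input in \cite{perlman2017regularity}) separately. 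I would also need to confirm that the spectral sequence (\ref{degen}) is compatible with the $H^1_{\{0\}}$ computation, so that vanishing on each $J_{\underline{x},p}$ summand indeed yields vanishing for $S/I_{\underline{z}}$ and then, via (\ref{limLoc}), for $H^i_{\overline{O}_k}(S)$; this compatibility is where I expect the subtlety to concentrate, since (\ref{degen}) is only an isomorphism of $\tn{GL}$-modules and not of $S$-modules.
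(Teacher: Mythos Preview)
Your setup is exactly the paper's: realize $J_{\underline{x},l}$ via (\ref{pushM}) as $H^0(\mathbb{F}(l),\mathcal{M}^l_{(d^l),l,\underline{y}})$, apply Grothendieck duality to get
\[
\mathrm{Ext}^j_S(J_{\underline{x},l},S)\cong H^j\bigl(\mathbb{F}(l),\mathcal{M}^l_{\lambda,l,\mu}\bigr)
\]
for explicit $\lambda,\mu$. Where you diverge is the endgame. You propose to attack $H^1_{\{0\}}$ of these modules by a direct Bott-type vanishing on the flag, tracking weights through the projective-bundle tower. That is not a well-posed computation as stated: $H^1_{\{0\}}(-)$ is local cohomology on $X$, not on $\mathbb{F}(l)$, and there is no mechanism in your outline that converts it into a sheaf-cohomology vanishing on the flag. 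The ``distinctness condition on $\gamma+\rho$'' controls which $\bS_\gamma W$ appear in $\mathrm{Ext}^j$, but it says nothing about the $S$-module structure needed to compute $H^1_{\{0\}}$.

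The paper's decisive step, which you are missing, is Lemma~\ref{badFilt}: the cohomology groups $H^j(\mathbb{F}(l),\mathcal{M}^l_{\lambda,l,\mu})$ are themselves filtered, \emph{as $S$-modules}, by subquotients of the same type $J_{\nu,p}$. This closes the loop: to prove $H^1_{\{0\}}(\mathrm{Ext}^j_S(J_{\underline{x},l},S))=0$ it now suffices to prove $H^1_{\{0\}}(J_{\nu,p})=0$ for all $\nu,p$. That last statement is a two-line computation: by graded local duality it is equivalent to $\mathrm{Ext}^{d-1}_S(J_{\nu,p},S)=0$, and the explicit description of $\mathrm{Ext}^\bullet_S(J_{\nu,p},S)$ from \cite[Theorem 3.2]{perlman2017regularity} forces $\binom{2p}{2}+2|\underline t|=1$ for some $\underline t\in\mathcal T_p(\nu)$, which is impossible. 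No iterative Bott bookkeeping is needed, and the boundary cases $l=0$, $l=m$ require no special treatment.

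Your concern about (\ref{degen}) being only a $\mathrm{GL}$-isomorphism is also misplaced: the input is the genuine $S$-module filtration of $S/I_{\underline z}$ by $J$'s, whose associated Ext spectral sequence degenerates, so $\mathrm{Ext}^j_S(S/I_{\underline z},S)$ is honestly filtered (as an $S$-module) by the $\mathrm{Ext}^j_S(J_{\underline x,p},S)$. Vanishing of $H^1_{\{0\}}$ on each filtered piece then propagates to the whole by the long exact sequence.
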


\noindent The purpose of this section is to prove this theorem. Using the notation in Section \ref{relative}, let $\mathbb{F}=\mathbb{F}(l)$, $Z=Z(l)$, and $Y=X \times \mathbb{F}$. Consider the following diagram:
$$
\begin{tikzcd}
Z \arrow[hookrightarrow]{r}{s}\arrow[dr, "\pi"] & Y \arrow[d,"p"]\arrow[r, "q"] & \mathbb{F}\\
& X
\end{tikzcd}
$$
where $s$ is the inclusion, $p$ and $q$ are the projections, and $\pi=p\circ s$. For ease of notation, set $d=x_1$. We define $\underline{y}\in \mathbb{N}_{\tn{dom}}^{n-2l}$ via $\underline{y}=(x_{l+1},\cdots, x_m)^{(2)}$ and let $\mathcal{M}=\mathcal{M}^{l}_{(d^l),l,\underline{y}}$. By (\ref{pushM}) and Grothendieck duality \cite[Theorem 11.1]{duality} we obtain:
$$
\textnormal{RHom}_S(J_{\underline{x},l},S)=\textnormal{RHom}_S(\tn{R}\pi_{\ast}\mathcal{M},S)=\tn{R}\pi_{\ast}(\tn{R}\mathscr{H}\E_Z(\mathcal{M},\pi^{!}S))=\tn{R}\pi_{\ast}(\mathcal{M}^{\ast}\otimes_{\mathcal{O}_Z}\pi^{!}S),
$$
where the last isomorphism follows from the fact that $\mathcal{M}$ is locally free. Following the argument in \cite[Section 4.2]{lHorincz2018iterated}, we obtain
$$
\pi^{!}S=s^{\ast}q^{\ast}(\textnormal{det}(\mathcal{Q}_{2l})^{\otimes (4l-2n)}\otimes \mathcal{L}^{(2-2n+4l,4-2n+4l,\cdots , 0)}),
$$
and tensoring with $\mathcal{M}^{\ast}=s^{\ast}q^{\ast}(\textnormal{det}(\mathcal{Q}_{2l})^{\otimes (-d)}\otimes \mathcal{L}^{-\underline{y}})$, we get that $\mathcal{M}^{\ast} \otimes_{\mathcal{O}_Z}\pi^{!}S=\mathcal{M}^{l}_{\lambda,l, \mu}$, where $\lambda=((4l-2n-d)^l)$, and $\mu=(2-2n+4l-y_1,4-2n+4l-y_2,\cdots ,-y_{2m-2l})$. Putting it all together, we obtain
$$
\textnormal{Ext}^j_S(J_{\underline{x},l}, S)=H^j\left(\mathbb{F}, \mathcal{M}_{\lambda,l,\mu}^{l}\right),
$$
where $\lambda_l\leq \mu_1$ since $d\geq y_1$. Just as in \cite{lHorincz2018iterated}, the modules $H^j(\mathbb{F}, \mathcal{M}_{\lambda,l,\mu}^{l})$ have filtrations with successive quotients $J_{\nu,p}$ for various $\nu\in \mathbb{Z}^m_{\tn{dom}}$ and $p\in \mathbb{N}$. We review some notation from (\ref{Bott}) in order to state the following lemma. Let $s\geq 1$ be an integer and let $\rho=(s-1,s-2,\cdots, 1,0)$. Given $\mu\in \mathbb{Z}^s$, write $\tn{sort}(\mu+\rho)$ for the element of $\mathbb{Z}^s$ obtained by putting the entries of $\mu+\rho$ in weakly decreasing order. We write
\begin{equation}\label{bottTwist}
\tilde{\mu}=\tn{sort}(\mu+\rho)-\rho,
\end{equation}
for the dominant integral weight obtained from $\mu$ by applying Bott's Algorithm (see \cite[Remark 4.1.5]{weyman2003cohomology}). Notice that for $\mu=(2-2n+4l-y_1,4-2n+4l-y_2,\cdots ,-y_{2m-2l})$ as above, $\tilde{\mu}$ satifies $\tilde{\mu}_{2i}=\tilde{\mu}_{2i-1}$ for all $i\leq m-l$.

\begin{lemma}\label{badFilt}
Let $0\leq q\leq l$ and $k\geq 0$, suppose that $\lambda\in \mathcal{Z}^l_{\textnormal{dom}}$ with $\lambda_1=\cdots=\lambda_q$. Let $\mu\in \mathbb{Z}^{n-2l}$, and assume that $\tilde{\mu}_{2i}=\tilde{\mu}_{2i-1}$ for all $i\leq m-l$. Suppose that $\lambda_l\leq \mu_j$ for some $j$. The cohomology group $H^k(\mathbb{F}(l),\mathcal{M}^{l}_{\lambda,q,\mu})$ has composition series with composition factors $J_{\nu,p}$, with $\nu\in \mathbb{Z}^m_{\textnormal{dom}}$, $p \leq q$, and $\nu_1=\cdots =\nu_{p+1}$.
\end{lemma}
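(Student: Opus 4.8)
I would follow the strategy of \cite[Section 4.2]{lHorincz2018iterated}, proving the statement by induction on the flag length $n-2l$, with the inductive step carried out by relative Bott computations along projective bundles.

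The bottom of the induction is the case where one has pushed all the way down to the point $\mathbb{F}(m)=\mathbb{F}([n,n];W)$, where $\mathcal{S}^m=S$, $\mathcal{L}^\mu$ is trivial, and $\mathcal{M}^m_{\lambda,q,\mu}=J_{\lambda,q}$ in cohomological degree $0$; so I must show that $J_{\lambda,q}$ itself admits a filtration by $J_{\nu,p}$ with $p\leq q$ and $\nu_1=\cdots=\nu_{p+1}$. If $\lambda_q=\lambda_{q+1}$ it is already such a building block, with $\nu=\lambda$, $p=q$. Otherwise --- writing $\lambda+(1^q)$ for $(\lambda_1+1,\dots,\lambda_q+1,\lambda_{q+1},\dots,\lambda_m)$ --- the identities $\mathcal{I}_{\mathfrak{succ}(\lambda,q-1,m)}=\mathcal{I}_{\mathfrak{succ}(\lambda,q,m)}+I_{\lambda+(1^q)}$ and $I_{\lambda+(1^q)}\cap\mathcal{I}_{\mathfrak{succ}(\lambda,q,m)}=\mathcal{I}_{\mathfrak{succ}(\lambda+(1^q),q,m)}$ (both immediate from (\ref{charI}) and the definition of $\mathfrak{succ}$, using $\lambda_1=\cdots=\lambda_q$) give short exact sequences $0\to J_{\lambda+(1^q),q}\to J_{\lambda,q}\to J_{\lambda,q-1}\to 0$; iterating on the subobject (with $\bigcap_t J_{\lambda+(t^q),q}=0$) presents $J_{\lambda,q}$ as a locally finite extension of the building blocks $J_{\lambda+(t^q),q-1}$, $t\geq 0$, each with $p=q-1\leq q$ and first $q$ entries equal. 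Negative $\lambda$ cause no trouble: by (\ref{Jtwist}) every $J_{\lambda,q}$ is a $\tn{det}(W)$-twist of a partition case, and such a twist preserves the shape $\nu_1=\cdots=\nu_{p+1}$.

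For the inductive step, assume $l<m$ and the statement for $l+1$. I would factor the forgetful map $\mathbb{F}(l)=\mathbb{F}([2l,n];W)\to\mathbb{F}(l+1)=\mathbb{F}([2l+2,n];W)$ through the auxiliary flag variety $\mathbb{F}([2l+1,n];W)$ as a composite of two projective bundles, governed by the tautological sequences $0\to\mathcal{L}_{2l+1}\to\mathcal{Q}_{2l+1}\to\mathcal{Q}_{2l}\to 0$ and $0\to\mathcal{L}_{2l+2}\to\mathcal{Q}_{2l+2}\to\mathcal{Q}_{2l+1}\to 0$. Since $\mathcal{J}_{\lambda,q,l}$ is built from $\mathcal{Q}_{2l}$, the first task is to re-express $\bw^2\mathcal{Q}_{2l}$ --- hence $\mathcal{S}^l$ and $\mathcal{J}_{\lambda,q,l}$ --- in terms of $\bw^2\mathcal{Q}_{2l+2}$ and the line bundles $\mathcal{L}_{2l+1},\mathcal{L}_{2l+2}$ via these sequences and a Cauchy/Pieri filtration; then one computes the two derived pushforwards by Bott's theorem for $\mathbb{P}$-bundles --- the twisting line bundles being $\mathcal{L}_{2l+1}^{\otimes\mu_1}$, $\mathcal{L}_{2l+2}^{\otimes\mu_2}$ and those produced by the re-expression --- using the hypothesis $\lambda_l\leq\mu_j$ to control regularity exactly as $d\geq y_1$ is used in (\ref{pushM}). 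A degenerate spectral sequence of the kind underlying (\ref{degen}) then identifies $H^{\bullet}(\mathbb{F}(l),\mathcal{M}^l_{\lambda,q,\mu})$, up to cohomological shifts, with $H^{\bullet}\big(\mathbb{F}(l+1),\bigoplus_\alpha \mathcal{M}^{l+1}_{\lambda^{(\alpha)},q^{(\alpha)},\mu^{(\alpha)}}\big)$; the point is to check that each summand has $q^{(\alpha)}\leq q$, has the first $q^{(\alpha)}$ entries of $\lambda^{(\alpha)}$ equal, has $\tilde{\mu}^{(\alpha)}_{2i}=\tilde{\mu}^{(\alpha)}_{2i-1}$ for $i\leq m-l-1$ (the last forcing the output weights to be of doubled type, so that the $J_{\nu,p}$ eventually appearing are indexed by genuine $\nu\in\mathbb{Z}^m_{\tn{dom}}$), and still satisfies $\lambda^{(\alpha)}_{l+1}\leq\mu^{(\alpha)}_j$ for some $j$. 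Applying the inductive hypothesis (or, when $l+1=m$, the previous paragraph) to each summand then gives a filtration by $J_{\nu,p}$ with $p\leq q^{(\alpha)}\leq q$ and $\nu_1=\cdots=\nu_{p+1}$.

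The step I expect to be the main obstacle is this last computation: carrying out the Cauchy/Pieri re-expression of $\mathcal{J}_{\lambda,q,l}$ relative to the two flag steps and tracking, through Bott's algorithm on the two $\mathbb{P}$-bundles, that the answer is a genuine direct sum of modules $\mathcal{M}^{l+1}$ with the stated constraints --- in particular the non-obvious bound $q^{(\alpha)}\leq q$ (the ``free'' columns of $\lambda$ can only be consumed, never created, by a flag step) and the persistence of the $\tilde\mu$-pairing. This is the skew-symmetric counterpart of the computations in \cite[Section 4.2]{lHorincz2018iterated}, with the combinatorics of doubled partitions $\nu^{(2)}$ playing the role of the square/non-square dichotomy there; I would organize it by first isolating the pure Pfaffian-twist direction, where only $\mathcal{L}^\mu$ varies, and then the $\mathcal{Q}_{2l}$-direction, mirroring the two halves of that reference.
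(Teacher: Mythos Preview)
Your induction runs in the opposite direction from both the paper and the reference you cite. In \cite[Theorem 4.2]{lHorincz2018iterated}, and in the paper's proof here, one does \emph{not} push $\mathcal{M}^{l}_{\lambda,q,\mu}$ down to $\mathbb{F}(l+1)$; one instead \emph{lifts} it to $\mathbb{F}(l-1)$. Concretely, for $q<l$ one sets $\lambda^{-}=(\lambda_1,\dots,\lambda_{l-1})$, $\mu^{+}=(\lambda_l,\lambda_l,\mu_1,\dots,\mu_{n-2l})$, and uses \cite[Theorem 2.1(c)]{lHorincz2018iterated} to show that $\mathrm{R}\pi^{(l-1)}_{\ast}\mathcal{M}^{l-1}_{\lambda^{-},q,\mu^{+}}=\mathcal{M}^{l}_{\lambda,q,\mu}$ concentrated in degree $0$, so Leray gives $H^{k}(\mathbb{F}(l),\mathcal{M}^{l}_{\lambda,q,\mu})=H^{k}(\mathbb{F}(l-1),\mathcal{M}^{l-1}_{\lambda^{-},q,\mu^{+}})$. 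One iterates until $l=q$; there a simple filtration by $\mathcal{M}_i=\mathcal{M}^{l}_{\lambda+(i^l),l,\mu}$ reduces to the $q-1$ case, with the top piece $\mathcal{M}_{\mu_1-d}$ contributing a single $J_{\nu,l}$ via (\ref{pushM}). The base case is $l=q=0$, where $\mathcal{M}^{0}_{\lambda,0,\mu}=\mathcal{L}^{\mu}$ and the cohomology is a single $J_{\mu^{(1/2)},0}$. No Cauchy/Pieri re-expression of $\mathcal{J}_{\lambda,q,l}$ in terms of $\mathcal{Q}_{2l+2}$ is ever needed.

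Your direction is in principle workable but genuinely harder, and the step you flag as the main obstacle really is one: pushing $\mathcal{J}_{\lambda,q,l}$ (built from $\mathcal{Q}_{2l}$) forward to $\mathbb{F}(l+1)$ and identifying the result with a sum of $\mathcal{M}^{l+1}$-type modules requires decomposing each $\mathbb{S}_{\underline{x}^{(2)}}\mathcal{Q}_{2l}$ along the two-step extension $\mathcal{Q}_{2l+2}\twoheadrightarrow\mathcal{Q}_{2l}$ and tracking ideals (not just representations) through Bott. Controlling $q^{(\alpha)}\leq q$ through that process is exactly the non-obvious part, and you have not done it. Your base case at $l=m$ (the filtration of $J_{\lambda,q}$ by $J_{\lambda+(t^q),q-1}$) is correct but already more elaborate than the paper's trivial base case $l=q=0$. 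The paper's trick---lift, then observe that pushing the lift back down recovers the original sheaf in a single degree---sidesteps the whole re-expression problem.
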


The proof is similar to that of \cite[Theorem 4.2]{lHorincz2018iterated}. As the numerology is different, we include the proof here.

\begin{proof}
The reduction to the case where $\lambda$ and $\mu$ are dominant is identical to the reduction in the proof of \cite[Theorem 4.2]{lHorincz2018iterated}. One just needs to verify that the hypotheses on $\mu$ hold throughout the reduction. This is fine however, as $\tilde{\tilde{\mu}}=\tilde{\mu}$. Thus, we may assume $\lambda$ and $\mu$ are dominant. Now we proceed by induction on $l$ and $q$. Write $\mu^{(1/2)}=(\mu_1,\mu_3,\cdots, \mu_{n-2l-1})$ for the element of $\mathbb{Z}^{m-l}$ consisting of the entries of $\mu$ with odd indices. As $\mu_{2i}=\mu_{2i-1}$ for all $i\leq m-l$, it follows that $\mu^{(1/2)}$ is obtained from $\mu$ by dividing the column lengths of the corresponding Young diagram by two.

 If $l=q=0$, then $\mathcal{M}^l_{\lambda,q,\mu}=\mathcal{L}^{\mu}$, so that $H^0(\mathbb{F}(l),\mathcal{M}^l_{\lambda,q,\mu})=\bS_{\mu}W=J_{\mu^{(1/2)},0}$, as $\mu$ is dominant. This completes the base case, so assume that $0\leq q<l$, and consider the projecton map $\pi^{(l-1)}:\mathbb{F}(l-1)\to \mathbb{F}(l)$ defined by forgetting $W_{2l-1}$ and $W_{2l-2}$ from the flag $W_{\bullet}\in \mathbb{F}(l-1)$. Define $\lambda^{-}=(\lambda_1,\cdots, \lambda_{l-1})$ and $\mu^{+}=(\lambda_l,\lambda_l, \mu_1,\cdots, \mu_{n-2l})$. By \cite[Theorem 2.1(c)]{lHorincz2018iterated}, we have
$$
\tn{R}^i\pi^{(l-1)}_{\ast}\left(\mathcal{M}^{l-1}_{\lambda^{-},q,\mu^{+}}\right)=
\begin{cases}
\mathcal{M}^l_{\lambda,q,\mu} & \tn{ if $i=0$,}\\
0 & \tn{otherwise.}
\end{cases}
$$
Thus, $H^k(\mathbb{F}(l), \mathcal{M}^l_{\lambda,q,\mu})=H^k(\mathbb{F}(l-1),\mathcal{M}^l_{\lambda^{-},q,\mu^{+}})$, yielding the desired conclusion by induction on $q$, as $\tilde{\mu}^{+}_{2i}=\tilde{\mu}^{+}_{2i-1}$ for $0\leq i\leq m-l+1$. Now assume that $l=q>0$, so that $\lambda=(d^l)$ for some $d\geq 0$. As $\lambda_l\leq \mu_j$ for some $j$, it follows that $d\leq \mu_1$. Just as in the proof of \cite[Theorem 4.2]{lHorincz2018iterated}, $\mathcal{M}=\mathcal{M}^l_{\lambda,l,\mu}$ has a filtration with filtered pieces $\mathcal{M}_i=\mathcal{M}^l_{\lambda+(i^l),l,\mu}$ for $i=0,\cdots, \mu_1-d$. We obtain a filtration
\begin{equation}\label{cohoFilt}
H^k(\mathbb{F}(l),\mathcal{M})\supseteq H^k(\mathbb{F}(l),\mathcal{M}_1)\supseteq\cdots \supseteq H^k(\mathbb{F}(l),\mathcal{M}_{\mu_1-d}). 
\end{equation}
By (\ref{pushM}) we have $H^k(\mathbb{F}(l),\mathcal{M}_{\mu_1-d})=0$ for $k>0$ and $H^0(\mathbb{F}(l),\mathcal{M}_{\mu_1-d})=J_{\nu,l}$, where $\nu_1=\cdots=\nu_{l+1}=\mu_1$ and $\nu_{l+i}=\mu^{(1/2)}_i$ for $i=2,\cdots, m-l$. In this case, $\mathcal{M}_i/\mathcal{M}_{i+1}=\mathcal{M}^l_{\lambda^i,l-1,\mu^i}$, where $\lambda^i=(d+i)^{l-1}$ and $\mu^i=(d+i,d+i,\mu_1,\cdots, \mu_{n-2l})$. Thus, we have $H^k(\mathbb{F}(l),\mathcal{M}_i/\mathcal{M}_{i+1})=H^k(\mathbb{F}(l),\mathcal{M}^l_{\lambda^i,l-1,\mu^i})$, so that the result holds for $\mathcal{M}$ by induction on $q$ and the filtration (\ref{cohoFilt}).
\end{proof}

Therefore, in order to prove Theorem \ref{vanish} (and thus Theorem \ref{structure}), it suffices to prove the following:

\begin{lemma}
If $0\leq l \leq m$ and $\nu\in \mathbb{Z}^m_{\textnormal{dom}}$ with $\nu_1=\cdots =\nu_{l+1}$, then
$$
H_{\{0\}}^1(J_{\nu,l})=0.
$$
\end{lemma}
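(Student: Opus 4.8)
The plan is to reduce the claimed vanishing to a bound on the projective dimension of $J_{\nu,l}$ as an $S$-module, and then to read that bound off from the description of $\tn{Ext}^{\bullet}_S(J_{\underline{z},l},S)$ that already appears in the proof of Lemma \ref{multQuot}. First I would remove the twist: writing $\nu=\underline{z}-(c^m)$ with $\underline{z}\in\mathcal{P}(m)$ and $c\geq 0$, equation (\ref{Jtwist}) (applied with $k=m$, where $\tn{det}^{(m)}$ is the one-dimensional representation $\tn{det}(W)$) gives $J_{\nu,l}=J_{\underline{z},l}\otimes_{\C}\tn{det}(W)^{\otimes(-c)}$, so $H^1_{\{0\}}(J_{\nu,l})\cong H^1_{\{0\}}(J_{\underline{z},l})\otimes_{\C}\tn{det}(W)^{\otimes(-c)}$; hence it suffices to treat a partition $\underline{z}\in\mathcal{P}(m)$ with $z_1=\cdots=z_{l+1}$.

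Next I would handle the two boundary values of $l$ directly. If $l=0$, comparing the characters (\ref{charI}) of $I_{\underline{z}}$ and of $I_{\mathfrak{succ}(\underline{z},0,m)}$ shows that $J_{\underline{z},0}$ is, as a $\tn{GL}$-representation, the single irreducible $\bS_{\underline{z}^{(2)}}W$ --- any $\underline{x}>\underline{z}$ already contributes to $I_{\mathfrak{succ}(\underline{z},0,m)}$, since a box can always be added to $\underline{z}$ inside a strictly larger partition --- so $J_{\underline{z},0}$ has finite length, is supported at the origin, and $H^i_{\{0\}}(J_{\underline{z},0})=0$ for all $i\geq 1$. If $l=m$, then $\mathfrak{succ}(\underline{z},m,m)$ is empty and the hypothesis forces $\underline{z}=(c^m)$, so $J_{\underline{z},m}=I_{(c^m)}=S\cdot\tn{Pf}^{\,c}$ is a free $S$-module of rank one; thus $\tn{depth}_{\mathfrak{m}}J_{\underline{z},m}=\binom{n}{2}\geq 2$ and $H^1_{\{0\}}(J_{\underline{z},m})=0$.

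For the remaining range $1\leq l\leq m-1$ I would invoke \cite[Theorem 3.2]{perlman2017regularity} in exactly the form in which it is used in the proof of Lemma \ref{multQuot}: $\tn{Ext}^j_S(J_{\underline{z},l},S)$ is a direct sum of irreducibles indexed by tuples $\underline{t}\in\mathcal{T}_l(\underline{z})$ with $j=\binom{n}{2}-\binom{2l}{2}-2|\underline{t}|$, and because $z_1=\cdots=z_{l+1}$ every such $\underline{t}$ has $t_1=l$, so $|\underline{t}|\geq l$. Hence $\tn{Ext}^j_S(J_{\underline{z},l},S)=0$ for $j>\binom{n}{2}-\binom{2l}{2}-2l=\binom{n}{2}-l(2l+1)$, and since $S$ is a polynomial ring this yields $\tn{pd}_S J_{\underline{z},l}\leq\binom{n}{2}-l(2l+1)\leq\binom{n}{2}-3$ (the bound $l(2l+1)\geq 3$ uses $l\geq 1$; if $J_{\underline{z},l}=0$ there is nothing to prove). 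By the Auslander--Buchsbaum formula $\tn{depth}_{\mathfrak{m}}J_{\underline{z},l}=\binom{n}{2}-\tn{pd}_S J_{\underline{z},l}\geq 3$, and by the identity $\tn{depth}_{\mathfrak{m}}M=\min\{\,i : H^i_{\{0\}}(M)\neq 0\,\}$ we conclude $H^0_{\{0\}}(J_{\underline{z},l})=H^1_{\{0\}}(J_{\underline{z},l})=0$. Together with the reductions preceding the lemma this completes the proofs of Theorem \ref{vanish} and of Theorem \ref{structure}.

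The step I expect to require the most care is the precise extraction of ``every $\underline{t}\in\mathcal{T}_l(\underline{z})$ satisfies $t_1=l$ whenever $z_1=\cdots=z_{l+1}$'' from \cite[Theorem 3.2]{perlman2017regularity}; this is the very fact exploited (for rectangular $\underline{z}$) in the proof of Lemma \ref{multQuot}, and it is what forces the top nonvanishing $\tn{Ext}$-degree low enough that the Auslander--Buchsbaum bound gives depth at least $2$. The twist reduction, the two boundary cases, and the passage from $\tn{Ext}$-vanishing to local cohomology vanishing are routine.
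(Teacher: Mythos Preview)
Your proof is correct and rests on the same key input as the paper's: the description of $\tn{Ext}^j_S(J_{\underline{z},l},S)$ from \cite[Theorem 3.2]{perlman2017regularity} together with the fact that every $\underline{t}\in\mathcal{T}_l(\underline{z})$ has $t_1=l$. The paper's argument is a bit more direct: after the same twist reduction, it applies graded local duality (\ref{GLLD}) to reduce $H^1_{\{0\}}(J_{\nu,l})=0$ to the single vanishing $\tn{Ext}^{d-1}_S(J_{\nu,l},S)=0$, and then derives a contradiction from $\binom{2l}{2}+2|\underline{t}|=1$ and $t_1=l$ uniformly in $l$, without separating out the boundary cases $l=0$ and $l=m$ and without the detour through projective dimension and Auslander--Buchsbaum.
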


\begin{proof}
Let $d=\binom{n}{2}$. By (\ref{Jtwist}) we may assume that $\nu\in \mathcal{P}(m)$. By graded local duality \cite[Theorem 14.4.1]{brodman1998local}, we need to show that $\tn{Ext}_S^{d-1}(J_{\nu,l},S)=0$. Suppose for contradiction that this module is nonzero. By \cite[Theorem 3.2]{perlman2017regularity} there is $\underline{t}\in \mathcal{T}_l(\nu)$ and $\lambda\in W(\nu,l,\underline{t})$ with $\binom{2l}{2}+2|\underline{t}|=1$. Conclude that $l=0$ or $l=1$. If $l=0$, then $\underline{t}=(0)$ and $\binom{2l}{2}+2|\underline{t}|=0=1$, a contradiction. Thus, $l=1$ and $\underline{t}=(0)$. However, as $\underline{t}\in \mathcal{T}_l(\nu)$, it has $t_1=l$, a contradiction.
\end{proof}

\section{Lyubeznik numbers for Pfaffian rings of even-sized skew-symmetric matrices}\label{Even}

Let $n=2m$ be even, let $W$ be an $n$-dimensional complex vector space, and let $S=\tn{Sym}(\bw^2 W)$. We set $d=\dim S=\binom{n}{2}$. In this section we compute the local cohomology modules  $H^{\bullet}_{\{0\}}(Q_p)$, yielding the Lyubeznik numbers for Pfaffian rings of even-sized skew-symmetric matrices. We go on to compute $H^{\bullet}_{\{0\}}(D_s)$, which we use in the next section to compute the Lyubeznik numbers in the odd case.

Choose an isomorphism $S\cong \mathbb{C}[x_{i,j}]_{1\leq i<j\leq n}$ and let $\tn{Pf}\in S$ be the $n\times n$ Pfaffian of the skew-symmetric matrix of indeterminants $(x_{i,j})$. Recall from the introduction the $\D$-modules $\langle \tn{Pf}^{-2k}\rangle_{\D}$. Our strategy for computing the Lyubeznik numbers for Pfaffian rings of even-sized skew-symmetric matrices is described as follows:
\begin{enumerate}
\item We realize the modules $\langle \tn{Pf}^{-2k}\rangle_{\D}$ as a direct limit of $S$-modules of the form $I_{\underline{z}}\otimes_S \mathscr{P}_e$, where $\mathscr{P}_e\subseteq \tn{Frac}(S)$ are the fractional ideals defined in Section \ref{Perlman}.

\item We compute the modules $\tn{Ext}^j_S(I_{\underline{z}}\otimes_S \mathscr{P}_e,S)$ for all pairs $(\underline{z},e)$ appearing in step (1). To do this, we use the previous computations of $\tn{Ext}^j_S(S/I_{\underline{x}},S)$ for $\underline{x}\in \mathcal{P}(m)$ described in Section \ref{Perlman}. By careful use of graded local duality \cite[Theorem 14.4.1]{brodman1998local}, we obtain $H^{\bullet}_{\{0\}}(\langle \tn{Pf}^{-2k}\rangle_{\D})$ for all $0\leq k\leq m-1$.

\item By Theorem \ref{structure}, the local cohomology modules $H^j_{\overline{O}_k}(S)$ decompose as a direct sum of modules $Q_0,Q_1,\cdots , Q_k$. Since $Q_p=S_{\tn{Pf}}/\langle \tn{Pf}^{\; 2(p-m+1)}\rangle_{\D}$ for $p=0,\cdots ,m-1$, we use part (2) to compute $H^j_{\{0\}}(Q_p)$ via the long exact sequence of local cohomology. This yields the Lyubeznik numbers by formula (\ref{lyubeznik}).
\end{enumerate}

We begin by recalling graded local duality \cite[Theorem 14.4.1]{brodman1998local} in our setting. We write $(-)^{\vee}$ for the graded Matlis functor (written $^*D$ in \cite{brodman1998local}). With our grading conventions on $E$ (see Section \ref{Dmodule}), there is a graded local duality isomorphism for a finitely-generated graded $S$-module $M$:
\begin{equation}\label{GLLD}
H^j_{\{0\}}(M)\cong \tn{Ext}^{d-j}_S(M,\mathscr{P}_{n-1})^{\vee},
\end{equation}
for all $j\geq 0$, where $\mathscr{P}_{n-1}\subseteq \tn{Frac}(S)$ is the fractional ideal introduced in Section \ref{Perlman}. For example:
$$
H^d_{\{0\}}(S)\cong \tn{Hom}_S(S,\mathscr{P}_{n-1})^{\vee}\cong \mathscr{P}_{n-1}^{\vee}.
$$
By \cite[Exercise 14.4.2(iv)]{brodman1998local}, we have $S^{\vee}\cong E(-d)$, so that $\mathscr{P}_{n-1}^{\vee}\cong E$ (our $E$ is equal to $^*E(d)$ in \cite{brodman1998local}).

Now we work out part (1) of our strategy. Given an integer $e\geq 1$ and $0\leq k\leq m-1$, write $(m-k)\times e$ for the partition $(e^{m-k},0^k)\in \mathcal{P}(m)$. For each $e\geq 1$ and $0\leq k\leq m-1$, we define the module
\begin{equation}\label{M}
\mathscr{N}_{k,e}=I_{(m-k)\times e}\otimes_S \mathscr{P}_{-e-2k},
\end{equation}
which is isomorphic as a representation of $\tn{GL}$ to $I_{(m-k)\times e}\otimes_{\mathbb{C}} \tn{det}(W)^{\otimes (-e-2k)}$. Notice that $\mathscr{N}_{k,e}$ is a submodule of $\mathscr{N}_{k,e+1}$. Indeed:
$$
\mathscr{N}_{k,e}\subset \mathscr{N}_{k,e+1} \iff I_{(m-k)\times e}\otimes_S \mathscr{P}_1\subseteq I_{(m-k)\times (e+1)}\overset{(\ref{twister})}\iff I_{((e+1)^{m-k},1^k)}\subseteq I_{(m-k)\times (e+1)},
$$
and the last containment follows from (\ref{contain}). Similarly, $\mathscr{N}_{k,e}\subseteq \mathscr{N}_{k+1,e}$ for $0\leq k\leq m-2$.
\begin{lemma}\label{limitpfaff}
Let $k$ be an integer with $0\leq k\leq m-1$. Then
\begin{equation}\label{limitExpression}
\langle \textnormal{Pf}^{-2k} \rangle_{\mathcal{D}}=\lim_{\substack{\longrightarrow \\ e}}\;\;(\mathscr{N}_{k,e}).
\end{equation}
Further, the inclusion map $\langle \tn{Pf}^{-2k}\rangle_{\D}\hookrightarrow \langle \tn{Pf}^{-2k-2}\rangle_{\D}$ is direct limit of the inclusions $\mathscr{N}_{k,e}\subseteq \mathscr{N}_{k+1,e}$.
\end{lemma}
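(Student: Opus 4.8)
The plan is to prove the identification $\langle \tn{Pf}^{-2k}\rangle_{\D}=\varinjlim_e \mathscr{N}_{k,e}$ by comparing both sides as representations of $\tn{GL}$, using that the maps in the direct system are inclusions of $S$-submodules of a fixed ambient space. First I would observe that the fractional ideals $\mathscr{P}_{-e-2k}\subseteq \tn{Frac}(S)$ all sit inside $S_{\tn{Pf}}$, and that $\mathscr{N}_{k,e}=I_{(m-k)\times e}\otimes_S\mathscr{P}_{-e-2k}$ is naturally a $\tn{GL}$-stable $S$-submodule of $S_{\tn{Pf}}$ via the multiplication map $I_{(m-k)\times e}\otimes_S\mathscr{P}_{-e-2k}\to S_{\tn{Pf}}$ (which is injective since $\mathscr{P}_{-e-2k}$ is a rank-one torsion-free module and $I_{(m-k)\times e}$ is an ideal). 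Under this embedding the transition maps $\mathscr{N}_{k,e}\hookrightarrow\mathscr{N}_{k,e+1}$ established just before the lemma are literally containments inside $S_{\tn{Pf}}$, so the direct limit is simply the union $\bigcup_e\mathscr{N}_{k,e}$ inside $S_{\tn{Pf}}$, and it is a $\D$-submodule provided I know it is $\D$-stable — which will follow once I identify it with $\langle\tn{Pf}^{-2k}\rangle_{\D}$.

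Next I would compute the $\tn{GL}$-character of $\mathscr{N}_{k,e}$ using (\ref{twister}): since $e\geq 1$ we have $I_{(m-k)\times e}\otimes_S\mathscr{P}_{-e-2k}\cong\bigoplus_{\nu\geq (m-k)\times e-(({e+2k})^m)}\bS_{\nu^{(2)}}W$, i.e. the weights $\nu\in\ZZ^m_{\tn{dom}}$ with $\nu_i\geq -2k$ for $i\leq m-k$ and $\nu_i\geq -e-2k$ for $i>m-k$. Taking the union over $e$ (the embeddings being compatible with these decompositions, as the transition maps are $S$-linear inclusions hence sums of identity maps on matching isotypic components), the lower bound in the last $k$ coordinates disappears entirely and one gets $\bigoplus_{\nu}\bS_{\nu^{(2)}}W$ over all $\nu\in\ZZ^m_{\tn{dom}}$ with $\nu_{m-k}\geq -2k$, equivalently $\nu_i\geq -2k$ for all $i\leq m-k$. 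Re-indexing $\lambda=\nu^{(2)}$, this is exactly the condition $\lambda_{2(m-k)}\geq -2k$ on $\lambda\in\ZZ^{2m}_{\tn{dom}}$ with $\lambda_{2i-1}=\lambda_{2i}$; after the shift built into the definition of $\langle\tn{Pf}^{-2k}\rangle_{\D}$ this matches the character description (\ref{charPf}) of $\langle\tn{Pf}^{-2k}\rangle_{\D}$ (there written as $\lambda_{2k+1}\leq 2k$, the indexing difference being the reflection $\lambda\mapsto -\lambda$ coming from passing between $S_{\tn{Pf}}$ and its dual / between $\mathcal{B}(s,2m)$ for $s=m-k$; I would spell this reflection out carefully). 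So the union $\bigcup_e\mathscr{N}_{k,e}$ and $\langle\tn{Pf}^{-2k}\rangle_{\D}$ have the same character as $\tn{GL}$-representations. To upgrade this to equality of submodules of $S_{\tn{Pf}}$ I would note that $\mathscr{N}_{k,0}$ — or more directly $S\cdot\tn{Pf}^{-2k}=\mathscr{P}_{-2k}$, which is contained in $\mathscr{N}_{k,e}$ for $e=1$ after clearing denominators, or equals $\mathscr{N}_{k,0}$ — generates $\langle\tn{Pf}^{-2k}\rangle_{\D}$ over $\D$; hence $\langle\tn{Pf}^{-2k}\rangle_{\D}\subseteq\bigcup_e\mathscr{N}_{k,e}$ because each $\mathscr{N}_{k,e}$ is a $\D$-submodule of $S_{\tn{Pf}}$ (being a product of the ideal-power $I_{(m-k)\times e}$, itself a sum of $S$-isotypic pieces, with a power of $\tn{Pf}$ — one checks $\D$-stability, or alternatively one uses that the union is $\D$-stable because differentiation does not increase pole order by more than controlled amounts and the union is closed under that). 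Combined with the reverse inclusion from the character count, equality follows; in fact once one inclusion of the two submodules is known, the equality of characters forces equality.

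Finally, for the compatibility statement I would simply observe that the inclusion $\langle\tn{Pf}^{-2k}\rangle_{\D}\hookrightarrow\langle\tn{Pf}^{-2k-2}\rangle_{\D}$ is, under all of the above identifications, nothing but the inclusion of unions $\bigcup_e\mathscr{N}_{k,e}\subseteq\bigcup_e\mathscr{N}_{k+1,e}$ inside $S_{\tn{Pf}}$, and this is visibly the direct limit of the term-by-term inclusions $\mathscr{N}_{k,e}\subseteq\mathscr{N}_{k+1,e}$ recorded immediately before the lemma statement. The main obstacle I anticipate is bookkeeping the two competing indexing conventions — the "pole order" indexing of $\langle\tn{Pf}^{-2k}\rangle_{\D}$ versus the $\mathcal{B}(m-k,2m)$-indexing behind (\ref{charPf}), which differ by a sign reflection of dominant weights (tensoring by a power of $\det$ and dualizing) — and making sure the shift $\mathscr{P}_{-e-2k}$ versus $\mathscr{P}_{-2k}$ in the definition of $\mathscr{N}_{k,e}$ is exactly what is needed so that the $I_{(m-k)\times e}$-part contributes no spurious positivity constraint in the limit; everything else is a direct application of (\ref{charI}), (\ref{twister}), (\ref{charPf}), and the containments already verified in the text.
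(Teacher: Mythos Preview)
Your overall approach matches the paper's: both sides are realized as $\tn{GL}$-stable $S$-submodules of $S_{\tn{Pf}}$, and then one compares their $\tn{GL}$-characters using (\ref{twister}) and (\ref{charPf}). Your character computation is correct and agrees with the paper's (\ref{newM}), and your treatment of the second assertion is the same as the paper's.

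There is, however, one genuine misstep. You assert that ``each $\mathscr{N}_{k,e}$ is a $\D$-submodule of $S_{\tn{Pf}}$''. This is false: applying $\partial_{i,j}$ to $f\cdot\tn{Pf}^{-e-2k}$ with $f\in I_{(m-k)\times e}$ produces a term $\partial_{i,j}(f)\cdot\tn{Pf}^{-e-2k}$, and $\partial_{i,j}(f)$ need not lie in $I_{(m-k)\times e}$. Your fallback (``the union is $\D$-stable because differentiation does not increase pole order by more than controlled amounts'') is plausible but is left as a heuristic, so as written the inclusion $\langle\tn{Pf}^{-2k}\rangle_{\D}\subseteq\bigcup_e\mathscr{N}_{k,e}$ is not established.

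The paper sidesteps this entirely with a cleaner observation you omit: by (\ref{charLocal}) the $\tn{GL}$-module $S_{\tn{Pf}}$ is \emph{multiplicity-free}, so any two $\tn{GL}$-stable submodules with the same character are automatically equal---no inclusion needs to be checked, and no $\D$-stability of $\mathscr{N}_{k,e}$ or of the union is required. Once you have matched characters (which you do correctly), you are already done. Replace your inclusion argument with this one line and the proof is complete.
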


\begin{proof}
It is clear that the limit $\lim_{e\to \infty}(\mathscr{N}_{k,e})$ is an $S$-submodule of $S_{\tn{Pf}}$. As the $\tn{GL}$-module $S_{\tn{Pf}}$ has a multiplicity-free decomposition into irreducibles (\ref{charLocal}), it suffices to show that the two sides of (\ref{limitExpression}) have the same $\tn{GL}$ structure. 

$\subseteq :$ By (\ref{charPf}), the $\mathcal{D}$-module $\langle \textnormal{Pf}^{-2k} \rangle_{\mathcal{D}}$ decomposes as a direct sum of $\bS_{\lambda}W^{\ast}$ with $\lambda_{2i-1}=\lambda_{2i}$ for all $1\leq i \leq m$, and $\lambda_{2k+1}\leq 2k$. Equivalently, $\langle \tn{Pf}^{-2k}\rangle_{\D}$ decomposes into irreducibles $\bS_{\mu}W$ with $\mu_{2i-1}=\mu_{2i}$ for all $1\leq i\leq m$, and $\mu_{n-2k}\geq -2k$. By (\ref{twister}) we have 
\begin{equation}\label{newM}
\mathscr{N}_{k,e}\cong \bigoplus_{\substack{\nu\in \mathbb{Z}^m_{\tn{dom}}\\ \nu \geq (-2k^{m-k},(-e-2k)^k)}} \bS_{\nu^{(2)}}W.
\end{equation}
It follows that if $\bS_{\mu}W\subseteq \langle \textnormal{Pf}^{-2k} \rangle_{\mathcal{D}}$, then $\bS_{\mu}W\subseteq \mathscr{N}_{k,e}$ for some $e\gg 0$.

$\supseteq:$ Now suppose that $\bS_{\mu}W\subseteq \mathscr{N}_{k,e}$ for some $e\geq 1$. By (\ref{newM}), we have that $\mu_{2i-1}=\mu_{2i}$ for all $1\leq i\leq m$ and $\mu_{2k}\geq -2k$. Writing $\bS_{\mu}W=\bS_{\lambda}W^{\ast}$ for $\lambda_i=-\mu_{n-i+1}$, we see that $\bS_{\mu}W\subseteq \mathscr{N}_{k,e}$ implies that $\lambda_{2k+1}\leq 2k$ and $\lambda_{2i-1}=\lambda_{2i-1}$ for all $1\leq i\leq m$. By (\ref{charPf}), it follows that $\bS_{\mu}W=\bS_{\lambda}W^{\ast}\subseteq \langle \tn{Pf}^{-2k}\rangle_{\D}$, as required to complete the proof of the first assertion. 

To prove the second assertion, note that the direct limit (\ref{limitExpression}) is simply the union of the $\mathscr{N}_{k,e}$'s. Since $\mathscr{N}_{k,e}\subseteq \mathscr{N}_{k+1,e}$ for all $e\geq 1$, the result follows.
\end{proof}

We now prove two lemmas to tackle step (2) in our strategy. Part (b) of the first lemma will only be used to prove Theorem \ref{evenSimp}, which is a computation at the end of the section that we will use to compute the Lyubeznik numbers for Pfaffian rings of odd-sized skew-symmetric matrices in the next section. Recall that $d=\dim S=\binom{n}{2}$.

\begin{lemma}\label{techLemma}
Let $1\leq k\leq m-1$, and let $1\leq j\leq d$. The following hold:
\begin{enumerate}[label=(\alph*)]
\item The morphisms
\begin{equation}\label{referEq}
\tn{Ext}^j_S(\mathscr{N}_{k,e+1},S)\longrightarrow \tn{Ext}^j_S(\mathscr{N}_{k,e},S),
\end{equation}
induced by the inclusion $\mathscr{N}_{k,e}\subseteq \mathscr{N}_{k,e+1}$ are surjective for all $e\geq 1$.

\item The morphisms
\begin{equation}
\tn{Ext}^j_S(\mathscr{N}_{k,e},S)\longrightarrow \tn{Ext}^j_S(\mathscr{N}_{k-1,e},S),
\end{equation}
induced by the inclusion $\mathscr{N}_{k-1,e}\subseteq \mathscr{N}_{k,e}$ are zero for all $e\geq 1$.
\end{enumerate}
\end{lemma}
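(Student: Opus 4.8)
The plan is to reduce both assertions, via an exact twist, to statements about maps between $\tn{Ext}$-modules of the $\tn{GL}$-invariant ideals $I_{\underline{z}}$, which are controlled by \cite[Main Theorem]{perlman2017regularity} through the formulas (\ref{degen}), (\ref{kernel}) and (\ref{cokernel}). Since $\mathscr{P}_e=S\cdot\tn{Pf}^{\,e}$ is an invertible $S$-module, the functor $-\otimes_S\mathscr{P}_e$ is an exact autoequivalence and $\tn{Ext}^j_S(M\otimes_S\mathscr{P}_e,S)\cong\tn{Ext}^j_S(M,S)\otimes_S\mathscr{P}_{-e}$ for all $j$. Tensoring with $\mathscr{P}_{e+1+2k}$ identifies the inclusion $\mathscr{N}_{k,e}\subseteq\mathscr{N}_{k,e+1}$ with the natural inclusion of invariant ideals $I_{((e+1)^{m-k},1^k)}\subseteq I_{((e+1)^{m-k},0^k)}$ (using (\ref{twister}) and (\ref{contain})), and tensoring with $\mathscr{P}_{e+2k}$ identifies $\mathscr{N}_{k-1,e}\subseteq\mathscr{N}_{k,e}$ with $I_{((e+2)^{m-k+1},2^{k-1})}\subseteq I_{(e^{m-k},0^k)}$. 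Finally, for $j\geq 1$ the long exact sequence in $\tn{Ext}^{\bullet}_S(-,S)$ attached to $0\to I_{\underline{z}}\to S\to S/I_{\underline{z}}\to 0$ gives a natural isomorphism $\tn{Ext}^j_S(I_{\underline{z}},S)\cong\tn{Ext}^{j+1}_S(S/I_{\underline{z}},S)$, so each map above becomes a map $\tn{Ext}^{j+1}_S(S/I_{\underline{y}},S)\to\tn{Ext}^{j+1}_S(S/I_{\underline{z}},S)$ induced by a surjection of cyclic quotients, to which (\ref{kernel}) and (\ref{cokernel}) apply.

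For part (a) I would compute $\mathcal{Z}(((e+1)^{m-k},1^k))\setminus\mathcal{Z}(((e+1)^{m-k},0^k))$ directly from (\ref{mainExt}): for $c\geq 1$ the pairs attached to $((e+1)^{m-k},1^k)$ already lie in $\mathcal{Z}(((e+1)^{m-k},0^k))$, while at $c=0$ the first-column length jumps from $m-k$ to $m$, producing the single extra pair $((0^m),m-1)$. Now $J_{(0^m),m-1}=I_{(0^m)}/I_{\mathfrak{succ}((0^m),m-1,m)}=S/(\tn{Pf})$, and since $\tn{Pf}$ is a nonzerodivisor, $\tn{Ext}^i_S(S/(\tn{Pf}),S)=0$ for all $i\geq 2$. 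Hence by (\ref{cokernel}) the cokernel of (\ref{referEq}) equals $\tn{Ext}^{j+1}_S(S/(\tn{Pf}),S)$, which vanishes for $j\geq 1$; this is exactly the required surjectivity. (The hypothesis $j\geq 1$ is essential: at $j=0$ this cokernel is $\tn{Ext}^1_S(S/(\tn{Pf}),S)\neq 0$, and in any case the $j=0$ map is not governed by (\ref{cokernel}), since $\tn{Hom}_S(I_{\underline{z}},S)$ is strictly larger than $\tn{Ext}^1_S(S/I_{\underline{z}},S)$.)

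For part (b) I would show $\mathcal{Z}((e^{m-k},0^k))\cap\mathcal{Z}(((e+2)^{m-k+1},2^{k-1}))=\emptyset$ by a mismatch of second coordinates. Over the range $0\leq c\leq e-1$ relevant for $\mathcal{Z}((e^{m-k},0^k))$ the relevant column length is constantly $m-k$, so every pair there has second coordinate $m-k-1$; whereas for $((e+2)^{m-k+1},2^{k-1})$ the second coordinates that occur are $m-1$ (for $c\in\{0,1\}$) and $m-k$ (for $2\leq c\leq e+1$). Since $k\geq 1$ we have $m-k-1\notin\{m-1,m-k\}$, so the two sets are disjoint. Then (\ref{kernel}) together with (\ref{degen}) shows that the kernel of $\tn{Ext}^{j+1}_S(S/I_{(e^{m-k},0^k)},S)\to\tn{Ext}^{j+1}_S(S/I_{((e+2)^{m-k+1},2^{k-1})},S)$ has the full $\tn{GL}$-character of its source; being a $\tn{GL}$-equivariant $S$-linear map whose kernel is a $\tn{GL}$-submodule, it is therefore zero, and translating back shows the map in (b) vanishes for every $j\geq 1$.

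The main obstacle is the bookkeeping: getting the twists right so that the inclusions $\mathscr{N}_{\bullet,\bullet}\hookrightarrow\mathscr{N}_{\bullet,\bullet}$ become honest inclusions of invariant ideals, and then carrying out the elementary but fiddly computation of the sets $\mathcal{Z}(\underline{z})$ from (\ref{mainExt}) and of the relevant difference and intersection. A secondary point requiring care is that (\ref{degen}), (\ref{kernel}) and (\ref{cokernel}) are isomorphisms of $\tn{GL}$-modules only, so the conclusions that a map is surjective or zero must be read off at the level of $\tn{GL}$-characters --- which is valid because the kernels, images and cokernels of the graded $\tn{GL}$-equivariant $S$-linear maps in play are $\tn{GL}$-submodules with finite-dimensional graded pieces.
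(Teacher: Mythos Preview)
Your proposal is correct and follows essentially the same approach as the paper: untwist by the appropriate power of $\mathscr{P}$ so that the inclusions become inclusions of invariant ideals, pass to $\tn{Ext}^{j+1}_S(S/I_{\underline{z}},S)$ for $j\geq 1$, and then compute the relevant $\mathcal{Z}$-sets from (\ref{mainExt}) to apply (\ref{kernel}) and (\ref{cokernel}). The paper carries out exactly these steps with the same ideals $\underline{z}=((e+1)^{m-k},1^k)$ and $\underline{y}=((e+2)^{m-k+1},2^{k-1})$ and the same observation that the difference in (a) consists only of $(\underline{0},m-1)$ (corresponding to $J_{\underline{0},m-1}=S/(\tn{Pf})$) while the sets in (b) are disjoint by mismatch of the second coordinate.
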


\begin{proof}
(a) By (\ref{twister}), we have $\mathscr{N}_{k,e}=I_{\underline{z}}\otimes_S \mathscr{P}_{-e-2k-1}$, where $\underline{z}=((e+1)^{m-k},1^k)$. Thus, \cite[Proposition III.6.7]{hartshorne2013algebraic} implies
$$
\tn{coker}\left(\tn{Ext}^j_S(\mathscr{N}_{k,e+1},S)\to \tn{Ext}^j_S(\mathscr{N}_{k,e},S)\right ) =\mathscr{P}_{e+2k+1}\otimes_S\; \tn{coker}\left(\tn{Ext}^j_S(I_{(m-k)\times (e+1)},S)\to \tn{Ext}^j_S(I_{\underline{z}},S)\right).
$$
Since $j\geq 1$, we have $\tn{Ext}^j_S(I,S)\cong \tn{Ext}_S^{j+1}(S/I,S)$ for all ideals $I\subseteq S$. Therefore (\ref{cokernel}) implies that the cokernel above is equal to the tensor product of $\mathscr{P}_{e+2k+1}$ and an $S$-module that decomposes as a $\mathbb{C}$-vector space as follows:
\begin{equation}\label{Eqtwo}
\bigoplus_{(\underline{x},p)\in \mathcal{Z}(\underline{z})\setminus \mathcal{Z}((m-k)\times (e+1))} \tn{Ext}^{j+1}_S(J_{\underline{x},p},S).
\end{equation}
Using the notation from Section \ref{Perlman}, we have $\mathcal{Z}((m-k)\times(e+1))$ is the set of $(\underline{x},p)$ such that $p=m-k-1$ and $x_1=\cdots=x_{m-k}\leq e$, and $\mathcal{Z}(\underline{z})$ is the set of $(\underline{x},p)$ with $(\underline{x},p)=(\underline{0},m-1)$ or $p=m-k-1$, $x_1=\cdots =x_{m-k}\leq e$, and $(1^m)\leq \underline{x}$. In particular, since $k\geq 1$,
$$
\mathcal{Z}(\underline{z})\setminus \{(\underline{0},m-1)\}\subseteq \mathcal{Z}((m-k)\times (e+1)).
$$
As $J_{\underline{0},m-1}=S/I_{m\times 1}$, it has dimension $d-1$. Thus, $\tn{Ext}^i_S(J_{\underline{0},m-1},S)$ is nonzero if and only if $i=1$, i.e. $\tn{Ext}^{j+1}_S(J_{\underline{0},m-1},S)$ is nonzero if and only if $j=0$. Conclude that (\ref{Eqtwo}) is zero for all $j\geq 1$. Therefore, the maps (\ref{referEq}) are surjective for all $j\geq 1$, as required to complete the proof of (a).

(b) Similar to the previous part, using \cite[Proposition III.6.7]{hartshorne2013algebraic}, it suffices to show that the maps
\begin{equation}
\tn{Ext}^j_S(I_{(m-k)\times e},S)\longrightarrow \tn{Ext}^j_S(I_{(m-k+1)\times e}\otimes_S \mathscr{P}_2,S),
\end{equation}
induced by the inclusion $I_{(m-k+1)\times e}\otimes_S \mathscr{P}_2 \subseteq I_{(m-k)\times e}$ are zero for all $j\geq 1$. Note that by (\ref{twister}) we have
$$
I_{(m-k+1)\times e}\otimes_S \mathscr{P}_2=I_{\underline{y}},
$$
where $\underline{y}=((2+e)^{m-k+1},2^{k-1})$. As $\tn{Ext}^j_S(I,S)\cong \tn{Ext}^{j+1}_S(S/I,S)$ for all $j\geq 1$ and all ideals $I\subset S$, it suffices to show that the maps
\begin{equation}
\tn{Ext}^i_S(S/I_{(m-k)\times e},S)\longrightarrow \tn{Ext}^i_S(S/I_{\underline{y}},S),
\end{equation}
induced by the inclusion $I_{\underline{y}}\subseteq I_{(m-k)\times e}$ are zero for all $i\geq 2$. Using notation from Section \ref{Perlman}, we have $\mathcal{Z}((m-k)\times e)$ is a set of pairs $(\underline{x},p)$ with $p=m-k-1$, and $\mathcal{Z}(\underline{y})$ is a set of pairs $(\underline{x},p)$ with $p=m-1$ or $p=m-k$. As $k\geq 1$, it follows that $\mathcal{Z}((m-k)\times e)$ and $\mathcal{Z}(\underline{y})$ are disjoint. Therefore, by (\ref{kernel}), the maps in question are zero.
\end{proof}

\begin{lemma}\label{injectiveP}
Let $k$ be an integer with $1\leq k\leq m-1$. Then for all $e\geq 1$, and for all $0\leq j< d$, the morphisms
\begin{equation}\label{injMaps}
H^j_{\{0\}}(\mathscr{N}_{k,e})\longrightarrow H^j_{\{0\}}(\mathscr{N}_{k,e+1}),
\end{equation} 
induced by the inclusion $\mathscr{N}_{k,e}\subseteq \mathscr{N}_{k,e+1}$, are injective. In particular for $0\leq j<d$, the multiplicity of $E$ in the $\D$-module $H^j_{\{0\}}(\langle \tn{Pf}^{-2k}\rangle_{\D})$ is equal to the multiplicity of the representation $\tn{det}(W^{\ast})^{\otimes (n-1)}$ in $H^j_{\{0\}}(\mathscr{N}_{k,e})$ for $e\gg 0$.
\end{lemma}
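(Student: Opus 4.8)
The strategy I would use is to dualize everything via graded local duality and thereby reduce the asserted injectivity of (\ref{injMaps}) to the surjectivity statement of Lemma \ref{techLemma}(a). First I would note that each $\mathscr{N}_{k,e}$ is a finitely generated graded $S$-module, being isomorphic (as a graded module, up to the $\tn{GL}$-twist $\mathscr{P}_{-e-2k}$) to the invariant ideal $I_{(m-k)\times e}$; hence the graded local duality isomorphism (\ref{GLLD}) applies to it, and it is natural in the module, so that for a graded morphism $\phi$ the map induced by $\phi$ on $H^j_{\{0\}}$ is the graded Matlis dual of the map induced by $\phi$ on $\tn{Ext}^{d-j}_S(-,\mathscr{P}_{n-1})$. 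Since $\mathscr{P}_{n-1}=S\cdot \tn{Pf}^{\,n-1}$ is free of rank one over $S$, one has $\tn{Ext}^i_S(M,\mathscr{P}_{n-1})\cong \tn{Ext}^i_S(M,S)\otimes_S \mathscr{P}_{n-1}$ compatibly with morphisms in $M$.

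Next I would fix $0\le j<d$, so that $1\le d-j\le d$. The inclusion $\mathscr{N}_{k,e}\hookrightarrow \mathscr{N}_{k,e+1}$ induces a map $\tn{Ext}^{d-j}_S(\mathscr{N}_{k,e+1},S)\to \tn{Ext}^{d-j}_S(\mathscr{N}_{k,e},S)$, which is surjective by Lemma \ref{techLemma}(a), whose hypotheses $1\le k\le m-1$ and $1\le d-j\le d$ are in force. Tensoring with the free module $\mathscr{P}_{n-1}$ preserves surjectivity, so the induced map $\tn{Ext}^{d-j}_S(\mathscr{N}_{k,e+1},\mathscr{P}_{n-1})\to \tn{Ext}^{d-j}_S(\mathscr{N}_{k,e},\mathscr{P}_{n-1})$ is surjective; applying the exact contravariant functor $(-)^{\vee}$ and invoking (\ref{GLLD}) then shows that (\ref{injMaps}) is injective.

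For the last assertion I would argue as follows. Local cohomology commutes with the direct limit in Lemma \ref{limitpfaff}, so $H^j_{\{0\}}(\langle \tn{Pf}^{-2k}\rangle_{\D})=\varinjlim_e H^j_{\{0\}}(\mathscr{N}_{k,e})$, a direct limit along $\tn{GL}$-equivariant transition maps that are injective for $0\le j<d$ by the first part. Being a local cohomology module with support in $\{0\}$ of the holonomic $\D$-module $\langle \tn{Pf}^{-2k}\rangle_{\D}\subseteq S_{\tn{Pf}}$, the limit $H^j_{\{0\}}(\langle \tn{Pf}^{-2k}\rangle_{\D})$ is holonomic and supported at the origin, hence a finite direct sum of copies of the simple injective $E=D_0$. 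By the characters recalled in Section \ref{Dmodule}, $E$ is the only simple $\D$-module containing $\tn{det}(W^{\ast})^{\otimes(n-1)}$, and it contains it with multiplicity one; therefore the number of copies of $E$ in $H^j_{\{0\}}(\langle \tn{Pf}^{-2k}\rangle_{\D})$ equals the finite multiplicity of $\tn{det}(W^{\ast})^{\otimes(n-1)}$ there. Since the transition maps are injective morphisms of semisimple $\tn{GL}$-representations, the multiplicities of $\tn{det}(W^{\ast})^{\otimes(n-1)}$ in the modules $H^j_{\{0\}}(\mathscr{N}_{k,e})$ are non-decreasing in $e$ and bounded above by that finite number, hence stabilize; for $e\gg 0$ they coincide with the multiplicity in the limit, which is the claim.

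The main obstacle I expect is bookkeeping rather than anything conceptual: one must make sure the hypotheses of Lemma \ref{techLemma}(a) genuinely hold over the entire range $0\le j<d$, i.e. that $d-j$ never drops below $1$, and one must check that the $\mathscr{P}_{n-1}$-twist appearing in graded local duality can be pulled out of the $\tn{Ext}$ without disturbing functoriality -- which is exactly where one uses that $\mathscr{P}_{n-1}$, though a nontrivial $\tn{GL}$-representation, is free of rank one as an $S$-module.
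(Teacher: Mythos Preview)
Your proposal is correct and follows essentially the same approach as the paper: both dualize via graded local duality (\ref{GLLD}), pull the free rank-one twist $\mathscr{P}_{n-1}$ out of the $\tn{Ext}$, and invoke Lemma \ref{techLemma}(a) to obtain the needed surjectivity (hence injectivity after applying $(-)^{\vee}$). Your treatment of the second assertion---using that the injective transition maps force the multiplicities of $\tn{det}(W^{\ast})^{\otimes(n-1)}$ to be non-decreasing and bounded by the finite multiplicity in the holonomic limit---is likewise the same as the paper's ``union'' argument, just phrased slightly differently.
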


\begin{proof}
Let $\mathcal{K}^j_{k,e}$ be the kernel of the map (\ref{injMaps}) for $0\leq j<d$, and recall the graded Matlis functor $(-)^{\vee}$ \cite[Chapter 13, Chapter 14]{brodman1998local}. As $(-)^{\vee}$ is exact, graded local duality (\ref{GLLD}) implies that 
\begin{align*}
\mathcal{K}^j_{k,e} & =\left( \tn{coker}\left(\tn{Ext}^{d-j}_S(\mathscr{N}_{k,e+1},\mathscr{P}_{n-1})\longrightarrow \tn{Ext}^{d-j}_S(\mathscr{N}_{k,e},\mathscr{P}_{n-1})\right)\right)^{\vee}\\
& = \left( \tn{coker}\left(\tn{Ext}^{d-j}_S(\mathscr{N}_{k,e+1},S)\longrightarrow \tn{Ext}^{d-j}_S(\mathscr{N}_{k,e},S)\right)\otimes_S \mathscr{P}_{n-1}\right)^{\vee}.
\end{align*}
By Lemma \ref{techLemma}(a), conclude that $\mathcal{K}^j_{k,e}=0$ for all $e\geq 1$ and all $0\leq j<d$, as required.

To prove the second assertion, note that $H^j_{\{0\}}(\langle \tn{Pf}^{-2k}\rangle_{D})$ is a $\D$-module supported at the origin, so it must be a direct sum of copies of $E$. As $E$ contains the subrepresentation $\tn{det}(W^{\ast})^{\otimes (n-1)}$ with multiplicity one (since $E$ is the simple $\D$-module $D_0$, this follows from (\ref{characters})), it follows that the multiplicity of $E$ in $H^j_{\{0\}}(\langle \tn{Pf}^{-2k}\rangle_{D})$ is equal to the multiplicity of $\tn{det}(W^{\ast})^{\otimes (n-1)}$ in the representation $H^j_{\{0\}}(\langle \tn{Pf}^{-2k}\rangle_{D})$. Since filtered direct limits commute with $H^j_{\{0\}}(-)$, Lemma \ref{limitpfaff} and the first assertion of this lemma imply that, for $0\leq j<d$, the module $H^j_{\{0\}}(\langle \tn{Pf}^{-2k}\rangle_{D})$ is the union over $e$ of the modules $H^j_{\{0\}}(\mathscr{N}_{k,e})$. Therefore, the multiplicity of $E$ in the $\D$-module $H^j_{\{0\}}(\langle \tn{Pf}^{-2k}\rangle_{\D})$ is equal to the multiplicity of the representation $\tn{det}(W^{\ast})^{\otimes (n-1)}$ in $H^j_{\{0\}}(\mathscr{N}_{k,e})$ for $e\gg 0$.
\end{proof}

Before we complete step (2) in our strategy for computing the Lyubeznik numbers in this case, we store the following lemma. The proof is identical to the proof of \cite[Lemma 6.9]{lHorincz2018iterated}, replacing $\tn{det}$ with $\tn{Pf}$.

\begin{lemma}\label{Pfvanish}
For all $j\geq 0$ and $k<m$ we have $H^j_{\overline{O}_k}(S_{\tn{Pf}})=0$.
\end{lemma}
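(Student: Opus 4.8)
The plan is to reduce everything to the elementary observation that $\overline{O}_k$ lies inside the Pfaffian hypersurface when $k\le m-1$, so that inverting $\tn{Pf}$ makes the support empty. First I would recall from Section \ref{Perlman} that $\overline{O}_k$ is defined by the prime ideal $I_{(k+1)\times 1}$ and that $\overline{O}_k\subseteq \overline{O}_{m-1}=V(\tn{Pf})$; since $I_{(k+1)\times 1}$ is prime, hence radical, this containment of varieties upgrades to the ideal membership $\tn{Pf}\in I_{(k+1)\times 1}$.

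Next I would use that local cohomology with support in $\overline{O}_k$ of an $S_{\tn{Pf}}$-module $M$ may be computed by the \v{C}ech complex on (the images in $S_{\tn{Pf}}$ of) a generating set of $I_{(k+1)\times 1}$, so that $H^j_{\overline{O}_k}(M)=H^j_{I_{(k+1)\times 1}S_{\tn{Pf}}}(M)$. Because $\tn{Pf}\in I_{(k+1)\times 1}$ is a unit in $S_{\tn{Pf}}$, the extended ideal is all of $S_{\tn{Pf}}$, i.e.\ $V(I_{(k+1)\times 1}S_{\tn{Pf}})=\emptyset$ in $\operatorname{Spec}S_{\tn{Pf}}$, and local cohomology along the empty set vanishes in every cohomological degree. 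Applying this with $M=S_{\tn{Pf}}$ gives $H^j_{\overline{O}_k}(S_{\tn{Pf}})=0$ for all $j\ge 0$ and $k<m$.

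I do not anticipate any real obstacle: the whole argument is the set-theoretic containment $\overline{O}_k\subseteq V(\tn{Pf})$ together with the standard behaviour of local cohomology under localization, exactly parallel to \cite[Lemma 6.9]{lHorincz2018iterated} with $\tn{det}$ replaced by $\tn{Pf}$. The only line needing a word of justification is the promotion of the variety inclusion to an honest ideal membership, which is immediate from primality of $I_{(k+1)\times 1}$.
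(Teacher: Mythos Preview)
Your proposal is correct and is exactly the argument the paper intends: the paper simply cites \cite[Lemma 6.9]{lHorincz2018iterated} and says to replace $\tn{det}$ by $\tn{Pf}$, which amounts to your observation that $\overline{O}_k\subseteq V(\tn{Pf})$ for $k<m$, so $I_{(k+1)\times 1}S_{\tn{Pf}}=S_{\tn{Pf}}$ and the local cohomology vanishes. Even the appeal to primality is harmless but unnecessary, since local cohomology only depends on the radical.
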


\begin{lemma}\label{locPfaff}
Let $k$ be an integer with $0\leq k \leq m-1$. Then
$$
\sum_{j\geq 0} \left[H^j_{\{0\}}(\langle \textnormal{Pf}^{-2k} \rangle_{\mathcal{D}})\right]_{\D}\cdot q^j =[E]_{\D}\cdot q^{m(2m-1)-k(2k+3)-4(m-k-1)k}\cdot \binom{m-1}{m-k-1}_{q^4}.
$$
\end{lemma}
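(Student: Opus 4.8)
The plan is to reduce the computation of $H^{\bullet}_{\{0\}}(\langle\tn{Pf}^{-2k}\rangle_{\D})$ to the $\tn{Ext}$ computation of Lemma \ref{multQuot} via graded local duality. When $k=0$ the module $\langle\tn{Pf}^{0}\rangle_{\D}$ is simply $S$, and the asserted identity becomes $\sum_{j\ge0}[H^j_{\{0\}}(S)]_{\D}\cdot q^j=[E]_{\D}\cdot q^{d}$ with $d=\binom{n}{2}=m(2m-1)$, which is the standard fact that $H^j_{\{0\}}(S)$ is $E$ for $j=d$ and $0$ otherwise; so I would immediately reduce to the range $1\le k\le m-1$.

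For $1\le k\le m-1$ the starting point is the second assertion of Lemma \ref{injectiveP}: for each $0\le j<d$ the multiplicity of $E$ in $H^j_{\{0\}}(\langle\tn{Pf}^{-2k}\rangle_{\D})$ equals the multiplicity of $\tn{det}(W^{\ast})^{\otimes(n-1)}$ in $H^j_{\{0\}}(\mathscr{N}_{k,e})$ for $e\gg0$. I would fix $e$ large (in particular $e\ge 2(m-k)-1$), apply graded local duality (\ref{GLLD}) to get $H^j_{\{0\}}(\mathscr{N}_{k,e})\cong\tn{Ext}^{d-j}_S(\mathscr{N}_{k,e},\mathscr{P}_{n-1})^{\vee}$, and then use $\mathscr{N}_{k,e}=I_{(m-k)\times e}\otimes_S\mathscr{P}_{-e-2k}$ together with \cite[Proposition III.6.7]{hartshorne2013algebraic} to rewrite $\tn{Ext}^{d-j}_S(\mathscr{N}_{k,e},\mathscr{P}_{n-1})\cong\tn{Ext}^{d-j+1}_S(S/I_{(m-k)\times e},S)\otimes_S\mathscr{P}_{e+2k+n-1}$ (using $d-j\ge1$ to pass from the ideal to the quotient). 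Pushing the weight $((n-1)^n)$ of $\tn{det}(W^{\ast})^{\otimes(n-1)}$ through the graded Matlis dual (which interchanges $\bS_{\lambda}W$ and $\bS_{\lambda}W^{\ast}$) and through the twist by $\mathscr{P}_{e+2k+n-1}$ (a shift of weights by $((e+2k+n-1)^n)$, since $\tn{Pf}$ has weight $(1^n)$), this multiplicity becomes the multiplicity of $\tn{det}(W^{\ast})^{\otimes(e+2k)}$ in $\tn{Ext}^{d-j+1}_S(S/I_{(m-k)\times e},S)$.

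Next I would invoke Lemma \ref{multQuot} with $a=m-k$ and $b=e$ (so $b\ge2a-1$ and $n+b-2a=e+2k$): it yields $\sum_{i\ge0}\langle\tn{Ext}^i_S(S/I_{(m-k)\times e},S),\tn{det}(W^{\ast})^{\otimes(e+2k)}\rangle\cdot q^i=q^{2k^2+3k+1}\cdot\binom{m-1}{m-k-1}_{q^4}$, where the exponent $a(2a-3)-m(4a-2m-3)+1$ has been simplified with $a=m-k$. Setting $i=d-j+1$, so that the coefficient of $q^{i}$ above records the multiplicity of $E$ in $H^{j}_{\{0\}}(\langle\tn{Pf}^{-2k}\rangle_{\D})$, and applying the inversion identity (\ref{binomInvert}) to $\binom{m-1}{m-k-1}_{q^{-4}}$, I would arrive at $\sum_{0\le j<d}[H^j_{\{0\}}(\langle\tn{Pf}^{-2k}\rangle_{\D})]_{\D}\cdot q^j=[E]_{\D}\cdot q^{\,d-(2k^2+3k)-4(m-k-1)k}\cdot\binom{m-1}{m-k-1}_{q^4}$, which is the claimed formula after substituting $d=m(2m-1)$ and $2k^2+3k=k(2k+3)$. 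To finish, I would observe that for $k\ge1$ this polynomial has $q$-degree $m(2m-1)-k(2k+3)<d$, so it only remains to check $H^d_{\{0\}}(\langle\tn{Pf}^{-2k}\rangle_{\D})=0$; this follows from $0\to\langle\tn{Pf}^{-2k}\rangle_{\D}\to S_{\tn{Pf}}\to S_{\tn{Pf}}/\langle\tn{Pf}^{-2k}\rangle_{\D}\to0$, since $H^{\bullet}_{\{0\}}(S_{\tn{Pf}})=0$ by Lemma \ref{Pfvanish} (applied with $\overline{O}_0=\{0\}$) forces $H^d_{\{0\}}(\langle\tn{Pf}^{-2k}\rangle_{\D})\cong H^{d-1}_{\{0\}}(S_{\tn{Pf}}/\langle\tn{Pf}^{-2k}\rangle_{\D})$, which vanishes because $S_{\tn{Pf}}/\langle\tn{Pf}^{-2k}\rangle_{\D}$ is supported on $\overline{O}_{m-k-1}$, a variety of dimension at most $(m-2)(2m+3)=d-6<d-1$.

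The step I expect to be the main obstacle is the middle one: correctly carrying the representation $\tn{det}(W^{\ast})^{\otimes(n-1)}$ through the graded Matlis dual and the several fractional-ideal twists so that it lands on precisely the power of $\tn{det}(W^{\ast})$ whose multiplicity Lemma \ref{multQuot} computes, since an off-by-one or sign slip there propagates directly into the exponent of $q$ in the final answer. A secondary point that needs attention is the concluding vanishing of $H^d_{\{0\}}$, which is not covered by Lemma \ref{injectiveP} (that lemma only controls the range $0\le j<d$) and so must be handled separately by the dimension count above.
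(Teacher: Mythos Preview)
Your proposal is correct and follows essentially the same route as the paper: reduce via Lemma~\ref{injectiveP} and graded local duality to the multiplicity of a power of $\tn{det}(W^{\ast})$ in $\tn{Ext}^{\bullet}_S(S/I_{(m-k)\times e},S)$, apply Lemma~\ref{multQuot} with $a=m-k$ and $b=e$, and invert using (\ref{binomInvert}). The only notable difference is the treatment of $H^d_{\{0\}}$: you dispose of $H^{d-1}_{\{0\}}(Q_{m-k-1})$ directly by Grothendieck vanishing (the module is supported on $\overline{O}_{m-k-1}$, of dimension at most $d-6$), whereas the paper instead invokes Theorem~\ref{structure} to realize $Q_{m-k-1}=H^c_{\overline{O}_{m-k-1}}(S)$ and then appeals to the standard fact that $\lambda_{i,j}=0$ for $i>j$.
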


\begin{proof}
If $k=0$, then $\langle \tn{Pf}^{-2k}\rangle_{\D}=S$, so that $H^{\bullet}_{\{0\}}(\langle \tn{Pf}^{-2k}\rangle_{\D})=H^d_{\{0\}}(S)=E$. As $\binom{m-1}{m-k-1}_{q^4}=1$ and 
$$
m(2m-1)-k(2k+3)-4(m-k-1)k=m(2m-1)=d,
$$
the result holds for $k=0$.

Now assume that $1\leq k\leq m-1$. We claim that $H^d_{\{0\}}(\langle \tn{Pf}^{-2k}\rangle_{\D})=0$. By (\ref{Q}), there is a short exact sequence
$$
0\longrightarrow \langle \tn{Pf}^{-2k}\rangle_{\D}\longrightarrow S_{\tn{Pf}}\longrightarrow Q_{m-k-1}\longrightarrow 0.
$$
By Lemma \ref{Pfvanish} we have $H^j_{\{0\}}(S_{\tn{Pf}})=0$ for all $j\geq 0$, so the long exact sequence of local cohomology yields $H^0_{\{0\}}(\langle \tn{Pf}^{-2k}\rangle_{\D})=0$ and 
$$
H^j_{\{0\}}(\langle \tn{Pf}^{-2k}\rangle_{\D})\cong H^{j-1}_{\{0\}}(Q_{m-k-1})\;\;\;\tn{for all $j\geq 1$}.
$$
Suppose for contradiction that $H^d_{\{0\}}(\langle \tn{Pf}^{-2k}\rangle_{\D})\neq 0$, so that $H^{d-1}_{\{0\}}(Q_{m-k-1})\neq 0$. By Theorem \ref{structure}, we have $H^c_{\overline{O}_{m-k-1}}(S)=Q_{m-k-1}$, where $c=\tn{codim}\;\overline{O}_{m-k-1}$. Thus, $H^{d-1}_{\{0\}}H^c_{\overline{O}_{m-k-1}}(S)\neq 0$, i.e. the Lyubeznik number $\lambda_{d-1,d-c}(R^{m-k-1})$ is nonzero. Since $k\geq 1$, (\ref{dimension}) implies that $d-1>d-c$, so that by \cite[Properties 3.2(1)]{nunez2016survey}, we obtain a contradiction. We conclude $H^d_{\{0\}}(\langle \tn{Pf}^{-2k}\rangle_{\D})=0$.

We now compute the multiplicity of $E$ in $H^j_{\{0\}}(\langle \tn{Pf}^{-2k}\rangle_{\D})$ for all $0\leq j<d$ and all $1\leq k\leq m-1$. By Lemma \ref{injectiveP} we need to determine the number of copies of $\tn{det}(W^{\ast})^{\otimes (n-1)}$ in $H^j_{\{0\}}(\mathscr{N}_{k,e})$ for all $j\geq 0$ and $e\gg 0$. By (\ref{GLLD}), this is equivalent to determining the number of copies of $\textnormal{det}(W^{\ast})^{\otimes(e+2k)}$ in $\textnormal{Ext}^{d-j}_S(I_{(m-k)\times e},S)$ for large $e$. 

Note that the vanishing locus of $I_{(m-k)\times e}$ is $\overline{O}_{m-k-1}$, a variety of dimension $<d-1$ by (\ref{dimension}). Thus, $\tn{Ext}^1_S(S/I_{(m-k)\times e},S)=0$, so that $\tn{Hom}_S(I_{(m-k)\times e},S)\cong S$. Since $S=\tn{Sym}(\bw^2 W)$, it does not contain the representation $\textnormal{det}(W^{\ast})^{\otimes(e+2k)}$ for all $e\geq 1$. Therefore, since $\textnormal{Ext}^{j}_S(S/I_{(m-k)\times e},S)\cong \textnormal{Ext}^{j-1}_S(I_{(m-k)\times e},S)$ for $j\geq 2$, we obtain:
$$
\sum_{j\geq 0}\left\langle \textnormal{Ext}^j_S(S/I_{(m-k)\times e},S), \tn{det}(W^{\ast})^{\otimes (n+e-2(m-k))}\right\rangle\cdot q^j=\sum_{j\geq 0} \left\langle \textnormal{Ext}^{j-1}_S(I_{(m-k)\times e},S), \tn{det}(W^{\ast})^{\otimes (n+e-2(m-k))}\right\rangle \cdot q^{j}.
$$
By Lemma \ref{multQuot} with $a=m-k$ and $b=e$, we obtain for $e\gg 0$:
\begin{equation}\label{blahp}
\sum_{j\geq 0}\left\langle \textnormal{Ext}^j_S(I_{(m-k)\times e},S), \tn{det}(W^{\ast})^{\otimes (n+e-2(m-k))}\right\rangle\cdot q^j=q^{(2k+1)(k+1)-1}\cdot \binom{m-1}{m-k-1}_{q^4}.
\end{equation}
Write $g_{m,k}(q)\in \mathbb{Z}[q]$ for the polynomial in (\ref{blahp}), so that $\sum_{j \geq 0} [H^j_{\{0\}}(\langle \textnormal{Pf}^{-2k} \rangle_{\mathcal{D}})]_{\D}\cdot q^j=[E]_{\D}\cdot q^d\cdot g_{m,k}(q^{-1})$. Using the binomial identity (\ref{binomInvert}) we get the desired result.
\end{proof}

Finally, we compute the modules $H^j_{\{0\}}(Q_p)$ for all $p=0,\cdots, m-1$, completing step (3) in our strategy to compute the Lyubeznik numbers in this case:

\begin{theorem}\label{locQ}
Let $n=2m$ be even, and let $0\leq p \leq m-1$ be an integer. Then
$$
\sum_{j\geq 0} \left[H^j_{\{0\}}(Q_p)\right]_{\D}\cdot q^j=[E]_{\D}\cdot q^{p(2p+3)}\cdot \binom{m-1}{p}_{q^4}.
$$
\end{theorem}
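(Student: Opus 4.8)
The plan is to deduce this from Lemma \ref{locPfaff} by a single short exact sequence together with the long exact sequence of local cohomology with support in the origin. Write $k = m-1-p$, so $0 \le k \le m-1$. By (\ref{Q}) (with the convention $\langle \tn{Pf}^{\,0}\rangle_{\D} = S$ used in Lemma \ref{locPfaff}, so that $Q_{m-1} = S_{\tn{Pf}}/S$ when $k=0$) there is a short exact sequence
$$
0 \longrightarrow \langle \tn{Pf}^{-2k}\rangle_{\D} \longrightarrow S_{\tn{Pf}} \longrightarrow Q_p \longrightarrow 0 .
$$
Since $\overline{O}_0 = \{0\}$, Lemma \ref{Pfvanish} gives $H^j_{\{0\}}(S_{\tn{Pf}}) = 0$ for all $j \ge 0$, so the connecting homomorphisms in the long exact sequence are isomorphisms
$$
H^j_{\{0\}}(Q_p) \;\cong\; H^{j+1}_{\{0\}}\!\left(\langle \tn{Pf}^{-2k}\rangle_{\D}\right) \qquad (j \ge 0);
$$
for $k \ge 1$ this is exactly the isomorphism recorded in the proof of Lemma \ref{locPfaff}, and for $k = 0$ it is the isomorphism $H^j_{\{0\}}(Q_{m-1}) \cong H^{j+1}_{\{0\}}(S)$. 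Moreover $H^0_{\{0\}}(\langle \tn{Pf}^{-2k}\rangle_{\D}) = 0$ in all cases: for $k \ge 1$ this was established in the proof of Lemma \ref{locPfaff}, and for $k = 0$ it is $H^0_{\{0\}}(S) = 0$, which holds because $\dim S = \binom{n}{2} \ge 1$.

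Consequently,
$$
\sum_{j \ge 0}\left[H^j_{\{0\}}(Q_p)\right]_{\D}\cdot q^j \;=\; \sum_{j \ge 0}\left[H^{j+1}_{\{0\}}\!\left(\langle \tn{Pf}^{-2k}\rangle_{\D}\right)\right]_{\D}\cdot q^j \;=\; q^{-1}\cdot\!\sum_{j \ge 0}\left[H^j_{\{0\}}\!\left(\langle \tn{Pf}^{-2k}\rangle_{\D}\right)\right]_{\D}\cdot q^j,
$$
where the last equality uses that the $j = 0$ term vanishes. Now I would substitute the formula of Lemma \ref{locPfaff}. Since $k = m-1-p$ we have $m-k-1 = p$, hence $\binom{m-1}{m-k-1}_{q^4} = \binom{m-1}{p}_{q^4}$, and it remains to verify the numerical identity
$$
m(2m-1) - k(2k+3) - 4(m-k-1)k \;=\; p(2p+3) + 1 \qquad\text{with } k = m-1-p,
$$
which follows from a direct expansion (both sides equal $2p^2 + 3p + 1$). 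Dividing the exponent by $q$ then produces the asserted $q^{p(2p+3)}$, completing the proof.

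I do not expect any genuine obstacle here: all of the essential content --- the composition series (\ref{filtration}) of $S_{\tn{Pf}}$, the vanishing $H^{\bullet}_{\{0\}}(S_{\tn{Pf}}) = 0$ from Lemma \ref{Pfvanish}, and the character and $\tn{Ext}$ computations underlying Lemma \ref{locPfaff} --- is already in place, and what remains is the single cohomological degree shift coming from the connecting map together with the elementary simplification of the exponent, neither of which poses a real difficulty.
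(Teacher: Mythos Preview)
Your proposal is correct and follows essentially the same approach as the paper: the paper's proof is a terse two-sentence version of exactly what you wrote, invoking the short exact sequence $0\to\langle\tn{Pf}^{\,2(p-m+1)}\rangle_{\D}\to S_{\tn{Pf}}\to Q_p\to 0$ together with Lemma \ref{Pfvanish} and Lemma \ref{locPfaff}. Your added verification of the exponent identity and the separate treatment of $k=0$ simply make explicit the bookkeeping the paper leaves to the reader.
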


\begin{proof}
The short exact sequence arising from the inclusion $\langle \tn{Pf}^{\;2(p-m+1)}\rangle_{\D}\subseteq S_{\tn{Pf}}$ yields a long exact sequence of local cohomology $H^{\bullet}_{\{0\}}(-)$. As $Q_p=S_{\tn{Pf}}/\langle \tn{Pf}^{\;2(p-m+1)}\rangle_{\D}$, the result follows from Lemma \ref{Pfvanish} and Lemma \ref{locPfaff}.
\end{proof}

We may now compute the Lyubeznik numbers in the case of even-sized skew-symmetric matrices:

\begin{proof}
[Proof of Theorem \ref{main} for $n=2m$ even] First assume that $k=m-1$, so that $H^1_{\overline{O}_k}(S)=S_{\tn{Pf}}/S=Q_{m-1}$ and $H^j_{\overline{O}_k}(S)=0$ otherwise. By (\ref{lyubeznik}) and Theorem \ref{locQ} we obtain that $L_{m-1}(q,w)=(q\cdot w)^{d-1}$, where $d=\binom{n}{2}$. Alternatively, this follows from \cite[Example 4.2]{nunez2016survey}.

 For $k<m-1$, the computations follow from (\ref{lyubeznik}), Lemma \ref{swapOrder}, and Theorem \ref{locQ}.
\end{proof}

We store the following work for use in the next section:

\begin{lemma}\label{zeroes}
For all $0\leq k\leq m-2$ and $j\geq 0$, the maps
\begin{equation}\label{lemmamorph}
H^j_{\{0\}}(\langle \tn{Pf}^{-2k}\rangle_{\D})\longrightarrow H^j_{\{0\}}(\langle \tn{Pf}^{-2k-2}\rangle_{\D}),
\end{equation}
induced by the inclusion $\langle \tn{Pf}^{-2k}\rangle_{\D}\subseteq \langle \tn{Pf}^{-2k-2}\rangle_{\D}$ are zero.
\end{lemma}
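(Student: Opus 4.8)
The plan is to reduce to Lemma~\ref{techLemma}(b) by graded local duality, after dealing with the top cohomological degree separately. First I would dispose of the degrees $j\geq d$: for $j>d$ both modules vanish for dimension reasons, while for $j=d$ the target $H^d_{\{0\}}(\langle \tn{Pf}^{-2k-2}\rangle_{\D})$ is already zero --- this is exactly what is shown inside the proof of Lemma~\ref{locPfaff}, which applies since $0\leq k\leq m-2$ forces $1\leq k+1\leq m-1$ --- so the map (\ref{lemmamorph}) is automatically zero in degree $d$.

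For $0\leq j\leq d-1$ I would argue as follows. By Lemma~\ref{limitpfaff}, the inclusion $\langle \tn{Pf}^{-2k}\rangle_{\D}\hookrightarrow \langle \tn{Pf}^{-2k-2}\rangle_{\D}$ is the filtered direct limit over $e$ of the inclusions $\mathscr{N}_{k,e}\subseteq \mathscr{N}_{k+1,e}$; since $H^j_{\{0\}}(-)$ commutes with filtered direct limits, it suffices to show that each induced map $H^j_{\{0\}}(\mathscr{N}_{k,e})\to H^j_{\{0\}}(\mathscr{N}_{k+1,e})$ vanishes. Applying the graded local duality isomorphism (\ref{GLLD}) and the exactness of the graded Matlis functor $(-)^{\vee}$, this map is the Matlis dual of the map $\tn{Ext}^{d-j}_S(\mathscr{N}_{k+1,e},\mathscr{P}_{n-1})\to \tn{Ext}^{d-j}_S(\mathscr{N}_{k,e},\mathscr{P}_{n-1})$ induced by $\mathscr{N}_{k,e}\subseteq \mathscr{N}_{k+1,e}$, so it is enough to see that the latter is zero. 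Since $\mathscr{P}_{n-1}$ is an invertible $S$-module, \cite[Proposition III.6.7]{hartshorne2013algebraic} identifies this map with $(-)\otimes_S\mathscr{P}_{n-1}$ applied to $\tn{Ext}^{d-j}_S(\mathscr{N}_{k+1,e},S)\to \tn{Ext}^{d-j}_S(\mathscr{N}_{k,e},S)$, and this is zero by Lemma~\ref{techLemma}(b), applied with the index $k+1$ in place of $k$ (legitimate since $1\leq k+1\leq m-1$) and in cohomological degree $d-j$ (legitimate since $1\leq d-j\leq d$). Passing to the direct limit over $e$ completes the argument in this range.

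I expect the only delicate point to be keeping track of the twists and the variance in the chain of identifications (local cohomology $\leftrightarrow$ Matlis dual of $\tn{Ext}$ into $\mathscr{P}_{n-1}$ $\leftrightarrow$ $\tn{Ext}$ into $S$ tensored with $\mathscr{P}_{n-1}$); all of the genuine vanishing is supplied by Lemma~\ref{techLemma}(b) together with Lemma~\ref{limitpfaff}. The reason degree $j=d$ cannot be folded into the main argument is that it would require vanishing at the level of $\tn{Hom}_S(\mathscr{N}_{k+1,e},S)$, which is false; this is precisely why we instead invoke the already-established vanishing of the target module in that degree.
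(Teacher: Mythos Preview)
Your proposal is correct and follows essentially the same route as the paper: reduce to the finite-level maps $H^j_{\{0\}}(\mathscr{N}_{k,e})\to H^j_{\{0\}}(\mathscr{N}_{k+1,e})$ via Lemma~\ref{limitpfaff} and commutation of $H^j_{\{0\}}$ with filtered colimits, then apply graded local duality to translate into the $\tn{Ext}$ map of Lemma~\ref{techLemma}(b), and handle $j=d$ separately using the vanishing established inside the proof of Lemma~\ref{locPfaff}. The only cosmetic differences are that the paper treats $k=0$ as a separate case (whereas your uniform argument via $k+1$ covers it), and the paper phrases the conclusion as ``the kernel equals the whole source'' rather than ``the dual map is zero''; these are equivalent and the substance is identical.
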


\begin{proof}
We first treat the case $k=0$, so $\langle \tn{Pf}^{-2k}\rangle_{\D}\cong S$. In this case, $H^{\bullet}_{\{0\}}(S)=H^d_{\{0\}}(S)=E$. By Lemma \ref{locPfaff}, we have 
$$
H^d_{\{0\}}(\langle \tn{Pf}^{-2k-2}\rangle_{\D})=H^d_{\{0\}}(\langle \tn{Pf}^{-2}\rangle_{\D})=0.
$$
Therefore, the result holds for $k=0$.

Now assume that $1\leq k \leq m-2$. By Lemma \ref{locPfaff} we have that $H^d_{\{0\}}(\langle \tn{Pf}^{-2k}\rangle_{\D})=0$, so that we only need to consider $j$ with $0\leq j\leq d-1$. For such a $j$, Lemma \ref{injectiveP} implies that the morphisms (\ref{lemmamorph}) are the direct limit over $e\geq 1$ of the maps
\begin{equation}\label{hoope}
H^j_{\{0\}}(\mathscr{N}_{k,e})\longrightarrow H^j_{\{0\}}(\mathscr{N}_{k+1,e}),
\end{equation}
induced by the inclusion $\mathscr{N}_{k,e}\subseteq \mathscr{N}_{k+1,e}$. Write $\mathcal{Y}^j_{k,e}$ for the kernel of the map (\ref{hoope}). Since the graded Matlis dual $(-)^{\vee}$ is exact, it follows from graded local duality (\ref{GLLD}) that
\begin{align*}
\mathcal{Y}^j_{k,e} & = \left(\tn{coker}\left( \tn{Ext}^{d-j}_S(\mathscr{N}_{k+1,e},\mathscr{P}_{n-1})\longrightarrow \tn{Ext}^{d-j}_S(\mathscr{N}_{k,e},\mathscr{P}_{n-1})\right)\right)^{\vee}\\
& = \left(\tn{coker}\left( \tn{Ext}^{d-j}_S(\mathscr{N}_{k+1,e},S)\longrightarrow \tn{Ext}^{d-j}_S(\mathscr{N}_{k,e},S)\right)\otimes_S \mathscr{P}_{n-1}\right)^{\vee}.
\end{align*}
By Lemma \ref{techLemma}(b) and graded local duality (\ref{GLLD}), it follows that $\mathcal{Y}^j_{k,e}=H^j_{\{0\}}(\mathscr{N}_{k,e})$ for all $0\leq j\leq d-1$. Therefore, the maps (\ref{lemmamorph}) are zero for all $j\geq 0$, as required.
\end{proof}

\begin{theorem}\label{evenSimp}
If $s$ is an integer with $0\leq s\leq m$, then 
$$
\sum_{j\geq 0}\left[ H^j_{\{0\}}(D_s)\right]_{\D}\cdot q^j=[E]_{\D}\cdot q^{s(2s-1)}\cdot \binom{m}{s}_{q^4}.
$$
\end{theorem}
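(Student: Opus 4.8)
The plan is to peel off $D_s$ from the pole order filtration (\ref{filtration}) and reduce everything to Lemma \ref{locPfaff}. First I would dispose of the extreme values of $s$ directly: for $s=0$ we have $D_0=E$, which is already supported at the origin, so $H^{\bullet}_{\{0\}}(E)=E$ sits in degree $0$, in agreement with $q^{0}\cdot\binom{m}{0}_{q^4}=1$; for $s=m$ we have $D_m=S$, so $H^{\bullet}_{\{0\}}(S)=H^{d}_{\{0\}}(S)=E$ sits in degree $d=m(2m-1)$, in agreement with $q^{m(2m-1)}\cdot\binom{m}{m}_{q^4}$. This also settles $n=2$ entirely. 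For $0<s<m$, (\ref{filtration}) gives the identification $D_s\cong \langle \tn{Pf}^{-2(m-s)}\rangle_{\D}/\langle \tn{Pf}^{-2(m-s-1)}\rangle_{\D}$, hence a short exact sequence
$$0\longrightarrow \langle \tn{Pf}^{-2(m-s-1)}\rangle_{\D}\longrightarrow \langle \tn{Pf}^{-2(m-s)}\rangle_{\D}\longrightarrow D_s\longrightarrow 0.$$

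Applying $H^{\bullet}_{\{0\}}(-)$ yields a long exact sequence in which the maps $H^j_{\{0\}}(\langle \tn{Pf}^{-2(m-s-1)}\rangle_{\D})\to H^j_{\{0\}}(\langle \tn{Pf}^{-2(m-s)}\rangle_{\D})$ are exactly the ones proved to vanish in Lemma \ref{zeroes} (apply it with $k=m-s-1\in\{0,\dots,m-2\}$). Therefore the long exact sequence decomposes into short exact sequences
$$0\longrightarrow H^j_{\{0\}}(\langle \tn{Pf}^{-2(m-s)}\rangle_{\D})\longrightarrow H^j_{\{0\}}(D_s)\longrightarrow H^{j+1}_{\{0\}}(\langle \tn{Pf}^{-2(m-s-1)}\rangle_{\D})\longrightarrow 0$$
for every $j\geq 0$. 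Since each $\langle \tn{Pf}^{-2k}\rangle_{\D}$ is a submodule of the domain $S_{\tn{Pf}}$ it is torsion-free over $S$, so $H^0_{\{0\}}(\langle \tn{Pf}^{-2k}\rangle_{\D})=0$; passing to the Grothendieck group $\Gamma_{\D}[q]$ then gives
$$\sum_{j\geq 0}\left[H^j_{\{0\}}(D_s)\right]_{\D}q^j=\sum_{j\geq 0}\left[H^j_{\{0\}}(\langle \tn{Pf}^{-2(m-s)}\rangle_{\D})\right]_{\D}q^j+q^{-1}\sum_{j\geq 0}\left[H^j_{\{0\}}(\langle \tn{Pf}^{-2(m-s-1)}\rangle_{\D})\right]_{\D}q^j.$$

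Finally I would substitute the formula of Lemma \ref{locPfaff} into both sums. A short computation shows that the exponent $m(2m-1)-k(2k+3)-4(m-k-1)k$ equals $s(2s-1)$ when $k=m-s$ and equals $s(2s+3)+1$ when $k=m-s-1$, while the Gaussian binomial $\binom{m-1}{m-k-1}_{q^4}$ becomes $\binom{m-1}{s-1}_{q^4}$ and $\binom{m-1}{s}_{q^4}$ respectively. Thus the right-hand side is $[E]_{\D}\left(q^{s(2s-1)}\binom{m-1}{s-1}_{q^4}+q^{s(2s+3)}\binom{m-1}{s}_{q^4}\right)$, and since $q^{s(2s+3)}=q^{s(2s-1)}\cdot q^{4s}$ the identity (\ref{binomIdent}) (valid for $m>s>0$) collapses this to $[E]_{\D}\cdot q^{s(2s-1)}\binom{m}{s}_{q^4}$. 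The one point requiring care is the bookkeeping in this last step: one must verify that the shift introduced by the connecting homomorphism (the $q^{-1}$ above) combines with the gap between the two Lemma \ref{locPfaff} exponents to produce exactly the shift $q^{4s}$ demanded by Pascal's identity for Gaussian binomials, i.e. that those two exponents differ by $4s+1$. Everything else is formal, resting on Lemmas \ref{locPfaff} and \ref{zeroes}.
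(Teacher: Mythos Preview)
Your proof is correct and follows essentially the same approach as the paper: use the short exact sequence coming from the pole order filtration, split the long exact sequence via Lemma \ref{zeroes}, substitute Lemma \ref{locPfaff}, and collapse with the $q$-Pascal identity (\ref{binomIdent}). If anything you are slightly more careful than the paper, treating the boundary case $s=m$ separately and explicitly noting torsion-freeness to justify $H^0_{\{0\}}(\langle \tn{Pf}^{-2k}\rangle_{\D})=0$.
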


\begin{proof}
When $s=0$, we have $D_s=E$. As $H^{\bullet}_{\{0\}}(E)=H^0_{\{0\}}(E)=E$, the result holds in this case. Now assume $s\geq 1$ and consider the short exact sequence arising from the filtration (\ref{filtration}):
$$
0\longrightarrow \langle \tn{Pf}^{-2(m-s-1)}\rangle_{\D}\longrightarrow \langle \tn{Pf}^{-2(m-s)}\rangle_{\D}\longrightarrow D_s\longrightarrow 0.
$$
By Lemma \ref{zeroes}, the long exact sequence of $H^{\bullet}_{\{0\}}(-)$ arising from this short exact sequence splits up into exact sequences:
$$
0\longrightarrow H^j_{\{0\}}(\tn{Pf}^{-2(m-s)}\rangle_{\D})\longrightarrow H^j_{\{0\}}(D_s)\longrightarrow H^{j+1}_{\{0\}}(\tn{Pf}^{-2(m-s-1)}\rangle_{\D})\longrightarrow 0.
$$
for all $j\geq 0$. We conclude that
$$
\sum_{j\geq 0}\left[ H^j_{\{0\}}(D_s)\right]_{\D}\cdot q^j=\sum_{j\geq 0} \left[H^j_{\{0\}}(\langle \tn{Pf}^{-2(m-s)}\rangle_{\D})\right]_{\D}\cdot q^j +\sum_{j\geq 0} \left[H^{j+1}_{\{0\}}(\langle \tn{Pf}^{-2(m-s-1)}\rangle_{\D})\right]_{\D}\cdot q^j.
$$
As $\sum_{j\geq 0} [H^{j}_{\{0\}}(\langle \tn{Pf}^{-2(m-s-1)}\rangle_{\D})]_{\D}\cdot q^j=\sum_{j\geq 0} [H^{j+1}_{\{0\}}(\langle \tn{Pf}^{-2(m-s-1)}\rangle_{\D})]_{\D}\cdot q^{j+1}$, Lemma \ref{locPfaff} implies that
$$
\sum_{j\geq 0}\left[ H^j_{\{0\}}(D_s)\right]_{\D}\cdot q^j = [E]_{\D}\cdot \left( q^{s(2s-1)}\cdot \binom{m-1}{s-1}_{q^4}+q^{s(2s+3)}\cdot \binom{m-1}{s}_{q^4} \right).
$$
We get the desired result after applying the binomial identity (\ref{binomIdent}).
\end{proof}

\section{Lyubeznik numbers for Pfaffian rings of odd-sized skew-symmetric matrices}\label{oddSect}

Throughout this section, let $n=2m+1$ be odd, let $W\cong \mathbb{C}^n$, and let $X=\bw^2 W^{\ast}$ be the space of $n\times n$ skew-symmetric matrices. For ease of notation, we set $V=W^{\ast}$. We continue to write $S=\tn{Sym}(\bw^2W)$ for the ring of polynomial functions on $X$ and $\tn{GL}=\tn{GL}(W)$. Let $\mathbb{G}=\tn{Gr}(2m,V)$ be the Grassmannian of $2m$-dimensional subspaces of $V$, and let $\mathcal{R}$ be the tautological subsheaf of $V\otimes_{\mathbb{C}}\mathcal{O}_{\mathbb{G}}$, a locally free sheaf of rank $2m$. We write $Y$ for the geometric vector bundle associated to the locally free sheaf $\bw^2 \mathcal{R}$, with structure map $q:Y\to \mathbb{G}$. Consider the following diagram:
$$
\begin{tikzcd}
Y \arrow[r, hook, "s"] \arrow[dr, "\pi"] & X \times \mathbb{G} \arrow[d, "p"]\\
& X
\end{tikzcd}
$$
where $s$ is the inclusion, $p$ is the projection, and $\pi=p\circ s$. In this case, $\pi(Y)=X$ and $\pi^{-1}(O_m)\cong O_m$.

Our strategy for computing the local cohomology modules $H^j_{\{0\}}(D_p)$ for $n$ odd is described as follows: Over a basic open affine subset $U\subseteq \mathbb{G}$, the bundle $Y$ trivializes to $\bw^2\mathbb{C}^{2m}\times U$. We will show that there exist $m+1$ simple $\tn{GL}$-equivariant $\D_Y$-modules $D^Y_0,D^Y_1,\cdots, D^Y_m$ such that:
\begin{enumerate}
\item for all $p=0,\cdots, m$, the restrictions to the trivializations $D^Y_p|_{q^{-1}(U)}$ are isomorphic to $D_p^{2m}\otimes_{\mathbb{C}}\mathbb{C}[U]$, where $\mathbb{C}[U]$ is the coordinate ring of $U$ and $D_p^{2m}$ is the simple $\tn{GL}_{2m}(\mathbb{C})$-equivariant $\D$-module on $\bw^2\mathbb{C}^{2m}$ with support $\overline{O}_p\subseteq \bw^2\mathbb{C}^{2m}$,

\item the derived direct image $\tn{R}\pi_{\ast}D^Y_p$ has cohomology in a single degree, isomorphic to $D_p$, where $D_p$ is the simple $\D_X$-module with support $\overline{O}_p\subseteq X$,

\item using fact (1), the calculations of $H^j_{\{0\}}(D_p^{2m})$ in Theorem \ref{evenSimp} may be glued together to obtain $\mathscr{H}^j_{\pi^{-1}(0)}(D^Y_p)$, where $\mathscr{H}^j_Z$ are the derived functors of the functor $\mathscr{H}^0_Z$ of sections with support in a subvariety $Z\subseteq Y$.
\end{enumerate}
As there is an isomorphism of functors $\pi_{\ast}\circ \mathscr{H}^0_{\pi^{-1}(0)}=\mathscr{H}^0_{\{0\}}\circ \pi_{\ast}$, we use facts (1)-(3) to compute $H^j_{\{0\}}(D_p)$ in Theorem \ref{locOdd}, yielding the Lyubeznik numbers for Pfaffian rings of odd-sized skew-symmetric matrices.

\subsection{Equivariant $\D$-modules on $Y$}
We begin by establishing notation for basic open affine subsets of $\mathbb{G}$, following \cite[Section 3.2.2]{eisenbud20163264}. Given a one-dimensional subspace $L$ of $V$, we obtain an open subset of $\mathbb{G}$:
\begin{equation}
U_L=\left\{ H\in \mathbb{G}\mid L\cap H=0\right\}\subseteq \mathbb{G}.
\end{equation}
If we fix $H\in U_L$, then $U_L$ is identified with $\tn{Hom}_{\mathbb{C}}(H,L)$, and under this identification, $H$ is sent to the zero map. For ease of notation throughout, we write $Y_L=q^{-1}(U_L)$ and $O_k^H$ for the orbits of $\bw^2 H$.

The following proposition addresses Step (1) above.

\begin{proposition}\label{DY}
The group $\tn{GL}$ acts on $Y$ with $m+1$ orbits $O^Y_0,\cdots , O_m^Y$. Further, for all $k=0,\cdots ,m$ and all basic open affine subsets $U_L=\tn{Hom}_{\mathbb{C}}(H,L)$, we have 
$$
O^Y_k\cap Y_L=O^H_k\times U_L\subseteq \bw^2 H\times U_L.
$$
As a consequence, there exists $m+1$ simple $\tn{GL}$-equivariant $\mathcal{D}_Y$-modules: $D_0^Y,\cdots , D_m^Y$, each satisfying:
\begin{equation}
q_{\ast}D^Y_s=\bigoplus_{\lambda\in \mathcal{B}(m-s,2m)}\mathbb{S}_{\lambda}\mathcal{R},
\end{equation}
where $\mathcal{B}(m-s,2m)$ is the set of dominant weights defined in Section \ref{Dmodule}.
\end{proposition}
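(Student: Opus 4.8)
The plan is to first describe the $\tn{GL}$-geometry of $Y$, then construct $D^Y_0,\dots,D^Y_m$ as intermediate extensions, identify their restrictions to the trivializing charts, and finally compute $q_\ast$.

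First I would identify a point of $Y$ with a pair $(H,\phi)$, $H\in\mathbb{G}$ and $\phi\in\bw^2 H$, so that $\pi$ sends $(H,\phi)$ to the image $\omega$ of $\phi$ under $\bw^2 H\hookrightarrow\bw^2 V=X$, with $\tn{rank}(\omega)=\tn{rank}(\phi)$. Since $n=2m+1$, every $\omega\in X$ has even rank $2k\le 2m$ and is supported on a unique $2k$-dimensional subspace $R(\omega)\subseteq V$ (the image of the map $V^\ast\to V$ it defines), and $\omega$ lies in $\bw^2 H$ exactly when $R(\omega)\subseteq H$; extending $R(\omega)$ to a $2m$-dimensional subspace shows $\pi$ is surjective and $\pi^{-1}(O_k)=\{(H,\phi):\tn{rank}(\phi)=2k\}$. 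That this is a single $\tn{GL}$-orbit follows from transitivity of $\tn{GL}$ on $O_k$ together with a block computation: writing $V=R(\omega)\oplus C$, the subgroup $\tn{Stab}_{\tn{GL}}(\omega)$ surjects onto $\tn{GL}(V/R(\omega))$, which acts transitively on the Grassmannian $\tn{Gr}(2m-2k,\,V/R(\omega))$ of the $H$'s containing $R(\omega)$. Hence $Y$ has exactly the $m+1$ orbits $O^Y_k:=\pi^{-1}(O_k)$, $k=0,\dots,m$, and when $k=m$ one has $R(\omega)=H$, giving $\pi^{-1}(O_m)\cong O_m$. For the second assertion: fixing $H\in U_L$ identifies $U_L$ with $\tn{Hom}_\C(H,L)$ via $\psi\mapsto\tn{graph}(\psi)\subseteq V$, and the graph isomorphisms $H\xrightarrow{\,\sim\,}\tn{graph}(\psi)$ give a canonical trivialization $\mathcal{R}|_{U_L}\cong H\otimes_\C\mathcal{O}_{U_L}$, hence $Y_L\cong\bw^2 H\times U_L$; since the graph isomorphisms preserve the rank of a $2$-form, $O^Y_k\cap Y_L=O^H_k\times U_L$ under this identification.

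Next I would take $D^Y_s$ to be the intermediate extension to $Y$ of the trivial local system on $O^Y_s$; by Riemann--Hilbert this is a simple $\tn{GL}$-equivariant regular holonomic $\D_Y$-module, and $D^Y_0,\dots,D^Y_m$ are distinct, having distinct supports $\overline{O^Y_s}$. On a chart $Y_L\cong\bw^2 H\times U_L$ the intermediate extension commutes with restriction to open subsets and with external product by a local system on a smooth variety, so $O^Y_s\cap Y_L=O^H_s\times U_L$ yields $D^Y_s|_{Y_L}\cong D^{2m}_s\boxtimes\mathcal{O}_{U_L}$, where $D^{2m}_s$ is the simple $\tn{GL}(H)$-equivariant $\D$-module on $\bw^2 H$ with support $\overline{O^H_s}$, the one whose $\tn{GL}(H)$-character is given by (\ref{characters}). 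Because $q|_{Y_L}$ is the projection to $U_L$ and $\bw^2 H$ is affine, $(q_\ast D^Y_s)|_{U_L}\cong\Gamma(\bw^2 H,D^{2m}_s)\otimes_\C\mathcal{O}_{U_L}$; applying (\ref{characters}) with the role of ``$W^\ast$'' played by the $2m$-dimensional space $H$, so that the index set is $\mathcal{B}(m-s,2m)$, together with the trivialization $\mathcal{R}|_{U_L}\cong H\otimes_\C\mathcal{O}_{U_L}$, identifies this with $\bigoplus_{\lambda\in\mathcal{B}(m-s,2m)}\bS_\lambda(\mathcal{R}|_{U_L})$. The isomorphisms used are canonical (the graph trivializations and the $\tn{GL}(H)$-equivariant decomposition of $D^{2m}_s$), hence independent of the auxiliary choice of $H\in U_L$, so they patch to a $\tn{GL}$-equivariant isomorphism $q_\ast D^Y_s\cong\bigoplus_{\lambda\in\mathcal{B}(m-s,2m)}\bS_\lambda\mathcal{R}$ over all of $\mathbb{G}$.

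The step I expect to be the main obstacle is the interaction between the intermediate extension $D^Y_s$ and the local product structure of $Y$: proving $D^Y_s|_{Y_L}\cong D^{2m}_s\boxtimes\mathcal{O}_{U_L}$ and that the patching data on overlaps is the canonical one, together with the structural input from \cite{lHorincz2018categories} that in these skew-symmetric settings every $\tn{GL}$-orbit supports only the trivial equivariant local system — this is what identifies each local factor with the modules $D^{2m}_s$ of Theorem \ref{evenSimp} and guarantees that ``simple with support $\overline{O^Y_s}$'' is exactly the intermediate extension of the trivial system. The orbit computation, by contrast, is routine once the block description of $\tn{Stab}_{\tn{GL}}(\omega)$ is written out.
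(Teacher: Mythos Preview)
Your argument is correct and its overall architecture matches the paper's: identify the orbits via the local product structure $Y_L\cong\bw^2H\times U_L$, define $D^Y_s$ via Riemann--Hilbert as the simple attached to the trivial local system on $O^Y_s$, and read off $D^Y_s|_{Y_L}\cong D^{2m}_s\boxtimes\mathcal{O}_{U_L}$. Two points differ from the paper's proof, both worth noting.

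For the orbit count, you argue through the stabilizer (the surjection $\tn{Stab}_{\tn{GL}}(\omega)\twoheadrightarrow\tn{GL}(V/R(\omega))$ acting transitively on hyperplanes containing $R(\omega)$), while the paper writes the $\tn{GL}$-action out explicitly on the charts $Y_L$ and uses transitivity on $\mathbb{G}$. Both are fine; yours is a bit more conceptual.

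For the formula $q_\ast D^Y_s=\bigoplus_{\lambda\in\mathcal{B}(m-s,2m)}\bS_\lambda\mathcal{R}$, the paper avoids the patching issue you correctly flag as the main obstacle. Instead of gluing chartwise isomorphisms, it introduces the global object $j_\ast\mathcal{O}_{O_m}$ (with $j:O_m\hookrightarrow Y$ the open immersion), observes that $q_\ast(j_\ast\mathcal{O}_{O_m})=\bigoplus_{\lambda\in\mathbb{Z}^m_{\tn{dom}}}\bS_{\lambda^{(2)}}\mathcal{R}$ is the relative version of (\ref{charLocal}), and then uses that $j_\ast\mathcal{O}_{O_m}$ has a global composition series with factors $D^Y_0,\dots,D^Y_m$ (the relative version of the pole-order filtration (\ref{filtration})). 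The $q_\ast D^Y_s$ are then read off as the subquotients corresponding to the index sets $\mathcal{B}(m-s,2m)$, globally and without any patching. This buys you exactly what you were worried about: there is nothing to check on overlaps, because the decomposition of $q_\ast(j_\ast\mathcal{O}_{O_m})$ into Schur bundles of $\mathcal{R}$ is already a statement about sheaves on $\mathbb{G}$. Your route works too (both sides are $\tn{GL}$-equivariant sheaves on the homogeneous space $\mathbb{G}$, hence determined by the fiber with its parabolic action), but the paper's auxiliary use of $j_\ast\mathcal{O}_{O_m}$ is cleaner and is reused immediately afterward in Lemma~\ref{pushEven}.
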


\begin{proof}
For $v\in V$ and $g\in \tn{GL}$, we simply write $gv$ for the dual the action of $g$ on $v$. Note that for all $\varphi\in \tn{Hom}_{\mathbb{C}}(H,L)$, the fibers of $\mathcal{R}$ are given by
$$
\mathcal{R}_{\varphi}=\tn{Image}\left(H \xlongrightarrow{[\tn{id}\;\varphi]} V\right).
$$
Therefore, under the identification $U_{L}=\tn{Hom}_{\mathbb{C}}(H, L)$ we have 
\begin{equation}\label{trivialize}
Y_L=\left\{\left(\sum (v_i\wedge w_i+\varphi(v_i)\wedge w_i + v_i\wedge \varphi(w_i)),\varphi\right)\in X\times U_L\mid v_i,w_i\in H\right\}.
\end{equation}
Using this description, there is an isomorphism $Y_L\xrightarrow{\sim} \bw^2 H\times U_L$ via the map
\begin{equation}\label{localTriv}
(v\wedge w+\varphi(v)\wedge w + v\wedge \varphi(w),\varphi)\mapsto (v\wedge w, \varphi).
\end{equation}
The action of $g\in \tn{GL}$ on $Y$ sends $(v\wedge w+\varphi(v)\wedge w + v\wedge \varphi(w),\varphi)\in Y_L$ to $(gv\wedge gw+g\varphi(v)\wedge gw + gv\wedge g\varphi(w),g\varphi g^{-1})$ in $Y_{gL}$. Thus, via the identification (\ref{localTriv}), $g$ sends $(v\wedge w, \varphi)\in \bw^2H \times U_{L}$ to $(gv\wedge gw, g\varphi g^{-1})\in \bw^2 (gH)\times U_{gL}$. Since $\tn{GL}$ acts transitively on $\mathbb{G}$, the first two assertions of the proposition follow from the fact that the orbits of $\bw^2 H$ under the action of $\tn{GL}(H)\subseteq \tn{GL}$ are $O_0^H,\cdots , O_m^H$.

For $s=0,\cdots, m$, we write $D_s^Y$ for the simple $\tn{GL}$-equivariant $\mathcal{D}_Y$-module corresponding to the trivial local system on the orbit $O^Y_s$ via the Riemann-Hilbert correspondence (see \cite[Theorem 11.6.1]{hotta2007d}). By the second assertion of the proposition, we know how each $D^Y_s$ restricts to a trivialization $Y_L$:
\begin{equation}
D^Y_s|_{Y_L}=D^H_s\otimes_{\mathbb{C}} \mathbb{C}[U_{L}],\;\;\;\;\;\;\; s=0,\cdots ,m,
\end{equation}
where $D^H_s$ is the simple $\D_{\bw^2H}$-module corresponding to the trivial local system on $O^H_s$, and $\mathbb{C}[U_{L}]$ is the coordinate ring of the affine space $U_L$. As $\pi(Y)=X$ and $\pi^{-1}(O_{m})\cong O_{m}$, there is an open immersion $j:O_{m}\hookrightarrow Y$. In fact, $j(O_{m})=O^Y_{m}$, so that $j(O_{m})$ is locally defined by the nonvanishing of the $2m\times 2m$ Pfaffian on the open affine sets $\bw^2H\times U_{L}$. We write $S^H$ for the coordinate ring of $\bw^2H$ and let $\tn{Pf}\in S^H$ denote the $2m\times 2m$ Pfaffian. By the above discussion, it follows that we have the following description of $j_{\ast}(\mathcal{O}_{O_m})$ restricted to $Y_L$:
\begin{equation}
j_{\ast}(\mathcal{O}_{O_{m}})|_{Y_L}=S^H_{\tn{Pf}}\otimes_{\mathbb{C}}\mathbb{C}[U_{L}].
\end{equation}
Thus, $j_{\ast}(\mathcal{O}_{O_{m}})|_{Y_L}$ has filtration as in (\ref{filtration}), so by \cite[Corollary 1.4.17(ii)]{hotta2007d}, the $\mathcal{D}_Y$-module $j_{\ast}\mathcal{O}_{O_{m}}$ has filtration with composition factors $D_0^Y,\cdots ,D_m^Y$, each with multiplicity one.
Since $\tn{Sym}_{\mathbb{G}}(\bw^2 \mathcal{R}^{\ast})$ is the direct image via $q$ of $\mathcal{O}_Y$, we conclude from (\ref{charLocal}) that 
\begin{equation}\label{relativeDecomp}
q_{\ast}\left( j_{\ast}\mathcal{O}_{O_{m}}\right)=\bigoplus_{\lambda\in \mathbb{Z}^m_{\tn{dom}}} \bS_{\lambda^{(2)}} \mathcal{R},\;\;\;\;\tn{and}\;\;\;\;q_{\ast}D^Y_s=\bigoplus_{\lambda\in \mathcal{B}(m-s,2m)} \bS_{\lambda} \mathcal{R},
\end{equation}
where the set $\mathcal{B}(m-s,2m)$ was introduced in Section \ref{Dmodule}.
\end{proof}

We are now ready to prove the following, resolving step (2) in our strategy to compute the local cohomology modules $H^j_{\{0\}}(D_p)$ for $n$ odd.

\begin{lemma}\label{pushEven}
For all $0\leq p\leq m$ we have 
$$
\tn{R}^{2m-2p}\pi_{\ast} D^Y_p\cong D_p,
$$
and $\tn{R}^i\pi_{\ast} D^Y_p=0$ otherwise. In particular, we have that $\tn{R}^{2m-2p}\pi_{\ast}(j_{\ast}\mathcal{O}_{O_m})\cong D_p$ and $\tn{R}^i\pi_{\ast}(j_{\ast}\mathcal{O}_{O_m})=0$ otherwise.
\end{lemma}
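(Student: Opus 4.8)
The plan is to compute $\tn{R}\pi_*D^Y_p$ by descending through the Grassmannian $\mathbb{G}=\tn{Gr}(2m,V)$ and invoking Bott's theorem~(\ref{Bott}), then to pin down the answer using the explicit characters from Section~\ref{Dmodule} together with the semisimplicity of $\tn{mod}_{\tn{GL}}(\D_X)$ for $n$ odd. First I would record that $\pi$ is proper, since $s$ is a closed immersion and $\mathbb{G}$ is projective, and work with the factorization $\pi=p\circ s$. The map $s\colon Y\into X\times\mathbb{G}$ exhibits $Y=\tn{Tot}_{\mathbb{G}}(\bw^2\mathcal{R})$ as the sub-vector-bundle of $X\times\mathbb{G}=\tn{Tot}_{\mathbb{G}}(\bw^2V\otimes_{\C}\mathcal{O}_{\mathbb{G}})$ cut out by $\bw^2\mathcal{R}\subseteq\bw^2V\otimes\mathcal{O}_{\mathbb{G}}$. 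Since $\dim V=2m+1$ the tautological quotient $\mathcal{Q}$ is a line bundle, so dualizing the tautological sequence and taking $\bw^2$ yields $0\to\mathcal{R}^{\ast}\otimes\mathcal{Q}^{\ast}\to\bw^2V^{\ast}\otimes\mathcal{O}_{\mathbb{G}}\to\bw^2\mathcal{R}^{\ast}\to 0$; hence the conormal bundle of $Y$ in $X\times\mathbb{G}$ is $\mathcal{K}:=\mathcal{R}^{\ast}\otimes\mathcal{Q}^{\ast}$, of rank $2m$, with $\bw^a\mathcal{K}\cong\bw^a\mathcal{R}^{\ast}\otimes(\mathcal{Q}^{\ast})^{\otimes a}$.

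Combining the Koszul complex on $\mathcal{K}$ attached to the regular embedding $s$, the decomposition $q_*D^Y_p=\bigoplus_{\lambda\in\mathcal{B}(m-p,2m)}\bS_{\lambda}\mathcal{R}$ from Proposition~\ref{DY}, and the fact that $\tn{R}p_*$ applied to a sheaf pulled back from $\mathbb{G}$ is $\mathcal{O}_X\otimes_{\C}\tn{R}\Gamma(\mathbb{G},-)$ (as $X$ is affine), the computation of $\tn{R}\pi_*D^Y_p$ reduces to evaluating the cohomology groups $H^{\bullet}\bigl(\mathbb{G},\bw^a\mathcal{K}\otimes\bS_{\lambda}\mathcal{R}\bigr)$ for $0\le a\le 2m$ and $\lambda\in\mathcal{B}(m-p,2m)$. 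This is the heart of the proof. For each pair $(a,\lambda)$ one runs Bott's algorithm~(\ref{Bott}): the constraints cutting out $\mathcal{B}(m-p,2m)$, namely the equalities $\lambda_{2i-1}=\lambda_{2i}$ and the two inequalities at position $2(m-p)$, force $\gamma+\rho$ either to acquire a repeated entry, in which case the group vanishes, or to be sorted into strictly decreasing order by a predictable number of transpositions; one checks that the surviving cohomology is confined to the single cohomological degree $2m-2p$ and assembles precisely into $\bigoplus_{\mu\in\mathcal{B}(m-p,\,2m+1)}\bS_{\mu}V^{\ast}$. Because $n=2m+1$ is odd, $\tn{mod}_{\tn{GL}}(\D_X)$ is semisimple \cite[Theorem~5.7(b)]{lHorincz2018categories}, so the spectral sequence assembling $\tn{R}\pi_*D^Y_p$ out of these bundle cohomologies is governed entirely by graded $\tn{GL}$-characters; comparing with~(\ref{characters}) yields $\tn{R}^{2m-2p}\pi_*D^Y_p\cong D_p$ and $\tn{R}^i\pi_*D^Y_p=0$ for $i\ne 2m-2p$. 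As a consistency check, $\tn{supp}(\tn{R}\pi_*D^Y_p)\subseteq\pi(\overline{O^Y_p})=\overline{O}_p$, and in the semisimple category $D_p$ is the unique simple $\D_X$-module supported there.

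Finally, the ``in particular'' follows formally. By Proposition~\ref{DY} the module $j_*\mathcal{O}_{O_m}$ carries a filtration with successive quotients $D^Y_0,\dots,D^Y_m$, each occurring once, so applying $\tn{R}\pi_*$ produces a spectral sequence whose first page is $\bigoplus_{p=0}^m\tn{R}^{\bullet}\pi_*D^Y_p=\bigoplus_{p=0}^m D_p[-(2m-2p)]$. Since the integers $2m-2p$ for $0\le p\le m$ are pairwise distinct and all even, no differential can be nonzero, the sequence degenerates, and $\tn{R}^{2m-2p}\pi_*(j_*\mathcal{O}_{O_m})\cong D_p$ with $\tn{R}^i\pi_*(j_*\mathcal{O}_{O_m})=0$ otherwise. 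The step I expect to be the main obstacle is the Bott bookkeeping of the previous paragraph: one must track, uniformly in $p$, which pairs $(a,\lambda)$ contribute, the cohomological degree that Bott's algorithm produces, and the precise list of surviving $\tn{GL}$-representations, and then verify both the concentration in degree $2m-2p$ and the identification of the resulting character with that of $D_p$; here the semisimplicity of $\tn{mod}_{\tn{GL}}(\D_X)$ is the structural input that turns the identification into a matter of matching characters.
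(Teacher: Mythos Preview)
Your overall strategy---reduce to Bott's theorem on $\mathbb{G}$, match characters with~(\ref{characters}), and then deduce the statement for $j_*\mathcal{O}_{O_m}$ from the parity and pairwise distinctness of the degrees $2m-2p$---is the same as the paper's. However, the Koszul step is both unnecessary and, as written, incorrect. The Koszul complex on the pullback of $\mathcal{K}$ resolves $s_*\mathcal{O}_Y$ as an $\mathcal{O}_{X\times\mathbb{G}}$-module, but it does \emph{not} resolve $s_*D^Y_p$: for $p<m$ the sheaf $D^Y_p$ is supported on the proper closed subset $\overline{O}^Y_p\subsetneq Y$, hence is not locally free over $\mathcal{O}_Y$, and it is certainly not pulled back from $\mathbb{G}$. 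There is therefore no mechanism by which tensoring the Koszul complex with $\bigoplus_\lambda\bS_\lambda\mathcal{R}$ produces a resolution of $s_*D^Y_p$, and your reduction to $H^\bullet(\mathbb{G},\bw^a\mathcal{K}\otimes\bS_\lambda\mathcal{R})$ computes the wrong object. The paper bypasses this entirely: since $X$ is affine one has $\tn{R}^i\pi_*D^Y_p\cong H^i(Y,D^Y_p)$ as $S$-modules, and since $q\colon Y\to\mathbb{G}$ is affine, $H^i(Y,D^Y_p)\cong H^i(\mathbb{G},q_*D^Y_p)=\bigoplus_{\lambda\in\mathcal{B}(m-p,2m)}H^i(\mathbb{G},\bS_\lambda\mathcal{R})$. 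No $\bw^a\mathcal{K}$ factors appear; one applies Bott directly to $\bS_\lambda\mathcal{R}=\bS_{\lambda^*}\mathcal{R}^*$ with $\gamma=(\lambda^*,0)$, and the bookkeeping you describe then goes through.

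There is a second gap. You invoke semisimplicity of $\tn{mod}_{\tn{GL}}(\D_X)$ to pass from a character equality to an isomorphism with $D_p$, but you have not explained why $\tn{R}^i\pi_*D^Y_p$ lies in that category at all: $\pi_*$ is the $\mathcal{O}$-module pushforward, not the $\D$-module pushforward $\int_\pi$, so a priori the output is only a $\tn{GL}$-equivariant $S$-module. The paper supplies this missing step by first deducing the analogous statement for $j_*\mathcal{O}_{O_m}$ from the filtration and the disjointness of the degrees $2m-2p$ (exactly as in your final paragraph), and then observing that $\tn{R}\pi_*(j_*\mathcal{O}_{O_m})=\tn{R}\iota_*\mathcal{O}_{O_m}=\int_\iota\mathcal{O}_{O_m}$, where $\iota\colon O_m\hookrightarrow X$ is the open immersion and the last equality is \cite[Example~1.5.22]{hotta2007d}. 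This identifies each $\tn{R}^{2m-2p}\pi_*(j_*\mathcal{O}_{O_m})\cong\tn{R}^{2m-2p}\pi_*D^Y_p$ as an honest object of $\tn{mod}_{\tn{GL}}(\D_X)$, after which the character match forces it to be $D_p$. Once you drop the Koszul detour and add this identification, your argument coincides with the paper's.
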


\begin{proof}
We begin by showing that $\tn{R}^{2m-2p}\pi_{\ast} D^Y_p$ has the same $\tn{GL}$-structure as $D_p$, and that $\tn{R}^j\pi_{\ast} D^Y_p=0$ for $j\neq 2m-2p$. Since $X$ is affine, we have $\tn{R}^i\pi_{\ast}D^Y_p\cong H^i\left(Y,D^Y_p\right)$, and since $q:Y\to \mathbb{G}$ is an affine morphism, we have $H^i(\mathbb{G},q_{\ast}D^Y_p)\cong H^i\left(Y,D^Y_p\right)$. Therefore, using Proposition \ref{DY} we obtain for all $i\geq 0$:
\begin{equation}\label{bigIso}
\tn{R}^i\pi_{\ast}D^Y_p\cong\bigoplus_{\lambda\in \mathcal{B}(m-p,2m)}H^i(\mathbb{G},\bS_{\lambda}\mathcal{R}).
\end{equation}

Given $\lambda\in \mathcal{B}(m-p,2m)$, write 
\begin{equation}
\lambda^{\ast}=(-\lambda_{2m},-\lambda_{2m-1},\cdots, -\lambda_2,-\lambda_1),
\end{equation}
so that $\bS_{\lambda}\mathcal{R}=\bS_{\lambda^{\ast}}\mathcal{R}^{\ast}$.
 let $\gamma=\gamma_{\lambda}=(\lambda^{\ast},0)\in \mathbb{Z}^{2m+1}$ and let $\rho=(2m,2m-1,\cdots, 0)\in \mathbb{Z}^{2m+1}$. Using the notation from (\ref{Bott}), we have
\begin{equation}\label{Bott2}
H^j(\mathbb{G}, \bS_{\lambda}\mathcal{R})=
\begin{cases}
\bS_{\tilde{\gamma}}W & \tn{ if $\gamma+\rho$ has distinct entries and $j=\sigma$,}\\

0 & \tn{ otherwise.}
\end{cases}
\end{equation}
As $\lambda\in \mathcal{B}(m-p,2m)$, we know that $\lambda_{2m-2p}\geq 2m-2p-1$, $\lambda_{2m-2p+1}\leq 2m-2p$, and $\lambda_{2i}=\lambda_{2i-1}$ for all $i$. Thus, $\lambda^{\ast}_{2p+1}\leq -2m+2p+1$ and $\lambda_{2p}^{\ast}\geq -2m+2p$, so that $(\gamma+\rho)_{2p}\geq 1$ and $(\gamma+\rho)_{2p+1}\leq 1$. As $\lambda_{2i}=\lambda_{2i-1}$ for all $i$, conclude that $(\gamma+\rho)_{2p+2}=(\gamma+\rho)_{2p+1}-1\leq 0$. If $(\gamma+\rho)_{2p+1}=1$, then $(\gamma+\rho)_{2p+2}=0$. Since $(\gamma+\rho)_{2m+1}=0$, it follows that $\gamma+\rho$ has repeated entries. Therefore, $H^j(\mathbb{G},\bS_{\lambda}\mathcal{R})=0$ in this case. Similarly, when $(\gamma+\rho)_{2p+1}=0$, we have $H^j(\mathbb{G},\bS_{\lambda}\mathcal{R})=0$.

Now assume that $(\gamma+\rho)_{2p+1}\leq -1$. Since $(\gamma+\rho)_{2p}\geq 1$, it follows that sorting $\gamma+\rho$ requires $2m-2p$ transpositions and
$$
\tilde{\gamma}_{\lambda}=(\lambda_1^{\ast},\cdots, \lambda_{2p}^{\ast},2p-2m,\lambda_{2p+1}^{\ast}+1,\cdots, \lambda_{2m}^{\ast}+1).
$$
Notice that if we reverse the order of $\tilde{\gamma}$ and multiply by $-1$, we get a unique element of $\mathcal{B}(m-p,2m+1)$. In other words $(\tilde{\gamma})^{\ast}\in \mathcal{B}(m-p,2m+1)$. Conclude that there is a bijection:
\begin{equation}
\left\{ \lambda\in \mathcal{B}(m-p,2m)\mid \lambda^{\ast}_{2p+1}\geq 2p-2m-1\right\}\;\; \leftrightarrow \;\;\mathcal{B}(m-p,2m+1),
\end{equation}
defined by sending $\lambda\in \mathcal{B}(m-p,2m)$ to $(\tilde{\gamma}_{\lambda})^{\ast}$. Since $\bS_{\lambda}W=\bS_{\lambda^{\ast}}W^{\ast}$, it follows from (\ref{bigIso}) and (\ref{Bott2}) that $\tn{R}^j\pi_{\ast}D^Y_p=0$ for $j\neq 2m-2p$ and
\begin{equation}
\tn{R}^{2m-2p}\pi_{\ast}D^Y_p=\bigoplus_{\lambda\in \mathcal{B}(m-p,2m+1)}\bS_{\lambda}W^{\ast}.
\end{equation}
By (\ref{characters}), it follows that $\tn{R}^{2m-2p}\pi_{\ast}D^Y_p$ has the same $\tn{GL}$-structure as the simple $\D_X$-module $D_p$.

Recall that $j_{\ast}\mathcal{O}_{O_m}$ has composition factors $D_0^Y,\cdots, D_m^Y$, each with multiplicity one. As $2m-2p$ is even and $2m-2p\neq 2m-2s$ for all $p\neq s$, we conclude that $\tn{R}^{2m-2p}\pi_{\ast}(j_{\ast} \mathcal{O}_{O_m})\cong \tn{R}^{2m-2p}\pi_{\ast}D^Y_p$ for all $0\leq p\leq m$, and $\tn{R}^i\pi_{\ast}(j_{\ast} \mathcal{O}_{O_m})=0$ otherwise. As $\pi$ is a $\tn{GL}$-equivariant morphism, to complete the proof of both assertions we only need to show that the higher direct images $\tn{R}^{2m-2p}\pi_{\ast}D^Y_p$ are actually $\D_X$-modules. In fact, we will show that $\tn{R}\pi_{\ast}(j_{\ast}\mathcal{O}_{O_m})$ is equal to $\int_{\iota}\mathcal{O}_{O_m}$, where $\iota:O_m\hookrightarrow X$ is the inclusion and $\int_{\iota}$ is the $\mathcal{D}$-module pushforward.

Since $\iota$ is an open immersion, \cite[Example 1.5.22]{hotta2007d} implies that $\int_{\iota}=\tn{R}{\iota}_{\ast}$. As $j$ is affine and $\pi\circ j=\iota$, we obtain $\tn{R}\pi_{\ast}\circ j_{\ast}=\tn{R}\iota_{\ast}$. Therefore, there is an isomorphism 
\begin{equation}\label{conclusion}
\int_{\iota}\mathcal{O}_{O_{m}}\cong \tn{R}\pi_{\ast}(j_{\ast}\mathcal{O}_{O_{m}}),
\end{equation}
which shows that each $\tn{R}^i\pi_{\ast}(j_{\ast}\mathcal{O}_{O_{m}})$ is a $\mathcal{D}_X$-module, completing the proof.
\end{proof}

\begin{remark}
Lemma \ref{pushEven} actually recovers the local cohomology of $S$ with support in the submaximal Pfaffian variety $\overline{O}_{m-1}$. Indeed, writing $\iota:O_m\hookrightarrow X$ for the open immersion of matrices of maximal rank, there is an exact sequence
$$
0\longrightarrow H^0_{\overline{O}_{m-1}}(S)\longrightarrow S\longrightarrow \iota_{\ast} \mathcal{O}_{O_m}\longrightarrow H^1_{\overline{O}_{m-1}}(S)\longrightarrow 0,
$$
and $H^i_{\overline{O}_{m-1}}(S)\cong \tn{R}^{i-1}\iota_{\ast}\mathcal{O}_{O_m}$ for all $i\geq 2$. As $\tn{R}\iota_{\ast}=\tn{R}\pi_{\ast}\circ j_{\ast}$, Lemma \ref{pushEven} yields the $\mathcal{D}$-module structure of the desired local cohomology modules. The $\tn{GL}$-structure of these modules is originally due to Raicu-Weyman-Witt \cite[Theorem 5.5]{raicu2014submax}, and the $\mathcal{D}$-module structure first appeared in \cite[Main Theorem]{raicu2016local}.
\end{remark}

\subsection{Local cohomology computations and Lyubeznik numbers in the odd-sized case}

In this subsection, we compute the modules $H^j_{\{0\}}(D_p)$ for all $j\geq 0$ and all $0\leq p\leq m$, obtaining the Lyubeznik numbers in the case of odd-sized skew-symmetric matrices. Given a closed subvariety $Z$ of $Y$ or $X$, write $\mathscr{H}^0_Z$ for the functor of sections with support in $Z$, and write $\Gamma_{\mathcal{D}_Y}$ for the Grothendieck group of the category of coherent $\mathcal{D}_Y$-modules.  As we will see in the following lemma, the local cohomology modules $\mathscr{H}^j_{\pi^{-1}(0)}(D^Y_p)$ are determined by Theorem \ref{evenSimp}:

\begin{lemma}\label{relLocal}
We have the following in $\Gamma_{\mathcal{D}_Y}$:
$$
\sum_{j\geq 0} \left[\mathscr{H}^j_{\pi^{-1}(0)}(D^Y_p)\right]_{\D_Y}\cdot q^j=[D^Y_0]_{\D_Y}\cdot q^{p(2p-1)}\cdot \binom{m}{p}_{q^4}.
$$
\end{lemma}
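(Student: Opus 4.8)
The plan is to reduce the computation of $\mathscr{H}^j_{\pi^{-1}(0)}(D^Y_p)$ to the even-sized computation in Theorem~\ref{evenSimp} by working over the trivializing cover $\{Y_L\}$ of the vector bundle $q\colon Y\to\mathbb{G}$, and then to pin down the answer globally using the orbit structure of $Y$ established in Proposition~\ref{DY}.

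First I would observe that $\pi^{-1}(0)$ is the zero section of $Y$, which is precisely the closed orbit $O^Y_0\cong\mathbb{G}$, and that under the trivialization $Y_L\cong\bw^2 H\times U_L$ of Proposition~\ref{DY} one has $\pi^{-1}(0)\cap Y_L=\{0\}\times U_L$. Since $D^Y_p$ is a simple (hence holonomic) $\D_Y$-module and $\pi^{-1}(0)$ is $\tn{GL}$-stable, each $\mathscr{H}^j_{\pi^{-1}(0)}(D^Y_p)$ is a $\tn{GL}$-equivariant holonomic (in particular coherent) $\D_Y$-module supported on the single orbit $O^Y_0$. Because $O^Y_0\cong\tn{GL}/P$ for a connected parabolic $P$, every $\tn{GL}$-equivariant coherent $\D_Y$-module supported on $O^Y_0$ is a direct sum of copies of $D^Y_0$ (by Kashiwara's equivalence together with the triviality of the component group of the stabilizer of a point of the zero section); hence $\mathscr{H}^j_{\pi^{-1}(0)}(D^Y_p)\cong (D^Y_0)^{\oplus \ell_j}$ for some $\ell_j\in\mathbb{N}$, and it remains only to determine $\ell_j$.

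To compute $\ell_j$, I would restrict to a single trivialization $Y_L$. Restriction to opens is exact and commutes with $\mathscr{H}^\bullet_{\pi^{-1}(0)}$, so, using Proposition~\ref{DY} (which gives $D^Y_p|_{Y_L}\cong D^H_p\otimes_{\mathbb{C}}\mathbb{C}[U_L]$) together with flat base change for local cohomology along the flat projection $\bw^2 H\times U_L\to\bw^2 H$,
$$
\mathscr{H}^j_{\pi^{-1}(0)}(D^Y_p)\big|_{Y_L}\;\cong\;\mathscr{H}^j_{\{0\}\times U_L}\!\left(D^H_p\otimes_{\mathbb{C}}\mathbb{C}[U_L]\right)\;\cong\;H^j_{\{0\}}(D^H_p)\otimes_{\mathbb{C}}\mathbb{C}[U_L].
$$
Here $D^H_p$ is the simple $\D_{\bw^2 H}$-module on the space of $2m$-dimensional skew-symmetric matrices, so Theorem~\ref{evenSimp} (applied to $\bw^2 H$) gives $H^j_{\{0\}}(D^H_p)\cong (D^H_0)^{\oplus c_j}$, where $c_j$ is the coefficient of $q^j$ in $q^{p(2p-1)}\cdot\binom{m}{p}_{q^4}$. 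Since $D^H_0\otimes_{\mathbb{C}}\mathbb{C}[U_L]=D^Y_0|_{Y_L}$ is a nonzero simple $\D_{Y_L}$-module, comparing with $(D^Y_0)^{\oplus\ell_j}|_{Y_L}=(D^Y_0|_{Y_L})^{\oplus\ell_j}$ forces $\ell_j=c_j$. Summing over $j$ and passing to $\Gamma_{\D_Y}$ then yields the asserted identity.

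I expect the only point requiring genuine care to be the global step: arguing that a $\tn{GL}$-equivariant coherent $\D_Y$-module supported on the closed orbit $O^Y_0$ must be a direct sum of copies of $D^Y_0$, so that the computation carried out on one trivialization $Y_L$ propagates to all of $Y$. Everything else—holonomicity of local cohomology of a holonomic module, exactness of restriction to opens, flat base change for $\mathscr{H}^\bullet$, and simplicity of the external tensor product of simple holonomic modules—is standard $\D$-module formalism and can simply be cited.
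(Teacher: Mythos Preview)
Your proposal is correct and follows essentially the same route as the paper's proof: both restrict to a trivialization $Y_L$, identify $\mathscr{H}^j_{\pi^{-1}(0)}(D^Y_p)|_{Y_L}$ with $H^j_{\{0\}}(D^H_p)\otimes_{\mathbb{C}}\mathbb{C}[U_L]$, and invoke Theorem~\ref{evenSimp}. The paper's proof is quite terse and leaves the passage from the local computation to the global Grothendieck-group identity implicit; your added argument via Kashiwara's equivalence and connectedness of the stabilizer (forcing $\mathscr{H}^j_{\pi^{-1}(0)}(D^Y_p)$ to be a sum of copies of $D^Y_0$) makes this step explicit, but the underlying strategy is the same.
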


\begin{proof}
Using notation as above, fix $L \in G(1,V)$ and $H\in \mathbb{G}$ such that $L\cap H=0$. Lemma \ref{DY} implies that $D^Y_p|_{Y_L}\cong D^H_p\otimes_{\mathbb{C}}\mathbb{C}[U_L]$, and $\mathscr{H}^j_{\pi^{-1}(0)}(D^Y_p)|_{Y_{L}}\cong \mathscr{H}^j_{\pi^{-1}(0)\cap Y_L}(D^Y_p|_{Y_{L}})$, so we may study the problem locally. Recall that $S^H$ is the coordinate ring of $\bw^2 H$. Since $\pi^{-1}(0)\cap Y_{L}=\{0\}\times U_{L}$, \cite[Proposition 7.15(3)]{iyengar2007twenty} yields
\begin{equation}\label{pil}
\Gamma\left(Y_L,\mathscr{H}^j_{\pi^{-1}(0)\cap Y_{L}}\left(D^Y_p|_{Y_{L}}\right)\right)\cong H^j_{\{0\}\times U_{L}}\left(D^H_p\otimes_{\mathbb{C}}\mathbb{C}[U_{L}]\right)\cong H^j_{\{0\}}(D_p^H)\otimes_{\mathbb{C}} \mathbb{C}[U_{L}].
\end{equation}
By (\ref{pil}) and Theorem \ref{evenSimp} the lemma follows.
\end{proof}

\begin{theorem}\label{locOdd}
If $n=2m+1$ is odd and $0\leq p\leq m$, then
$$
\sum_{j\geq 0}\left[H^j_{\{0\}}(D_p)\right]_{\D}\cdot q^j= [E]_{\D} \cdot q^{p(2p+1)}\cdot \binom{m}{p}_{q^4}.
$$
\end{theorem}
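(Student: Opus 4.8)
The plan is to transport the even-size computation of $H^{\bullet}_{\{0\}}(D^{2m}_s)$ from Theorem \ref{evenSimp} up the correspondence $\pi\colon Y\to X$ constructed in this section, using the functorial identity $\mathscr{H}^0_{\{0\}}\circ\pi_{\ast}=\pi_{\ast}\circ\mathscr{H}^0_{\pi^{-1}(0)}$ noted above together with the two Grothendieck spectral sequences that compute its derived functors.

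First I would record that $\pi^{-1}(0)=O^Y_0$ is the zero section of $q\colon Y\to\mathbb{G}$ (this is exactly what is checked locally on the trivializations in the proof of Lemma \ref{relLocal}), so that $D^Y_0$ is the unique simple $\D_Y$-module supported on $\pi^{-1}(0)$. Then I would upgrade Lemma \ref{relLocal} to honest isomorphisms $\mathscr{H}^b_{\pi^{-1}(0)}(D^Y_p)\cong (D^Y_0)^{\oplus c_b}$, where $\sum_{b\ge 0}c_b\,q^b=q^{p(2p-1)}\binom{m}{p}_{q^4}$: each $\mathscr{H}^b_{\pi^{-1}(0)}(D^Y_p)$ is a coherent $\D_Y$-module all of whose composition factors are $D^Y_0$, and $D^Y_0$ has no nonsplit self-extension since $\pi^{-1}(0)\cong\mathbb{G}$ is simply connected (alternatively one sees the splitting on a trivialization $Y_L$, where $\mathscr{H}^b_{\pi^{-1}(0)}(D^Y_p)$ restricts to $H^b_{\{0\}}(D^H_p)\otimes_{\mathbb{C}}\mathbb{C}[U_L]$, a direct sum of copies of $D^Y_0|_{Y_L}$ by Theorem \ref{evenSimp}).

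Next I would run the two spectral sequences for $\mathscr{H}^0_{\{0\}}\circ\pi_{\ast}=\pi_{\ast}\circ\mathscr{H}^0_{\pi^{-1}(0)}$, both applied to $D^Y_p$. The first has $E_2^{a,b}=H^a_{\{0\}}(\tn{R}^b\pi_{\ast}D^Y_p)$, and by Lemma \ref{pushEven} only the row $b=2m-2p$ survives, so it degenerates and gives $\tn{R}^{n}(\mathscr{H}^0_{\{0\}}\circ\pi_{\ast})(D^Y_p)\cong H^{\,n-2m+2p}_{\{0\}}(D_p)$. The second has $E_2^{a,b}=\tn{R}^a\pi_{\ast}\big((D^Y_0)^{\oplus c_b}\big)$, and by Lemma \ref{pushEven} applied with $p=0$ (which yields $\tn{R}^{2m}\pi_{\ast}D^Y_0\cong E$ and $\tn{R}^i\pi_{\ast}D^Y_0=0$ otherwise) only the column $a=2m$ survives, so it degenerates and gives $\tn{R}^{n}(\pi_{\ast}\circ\mathscr{H}^0_{\pi^{-1}(0)})(D^Y_p)\cong E^{\oplus c_{n-2m}}$. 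Equating these for all $n$ yields $H^j_{\{0\}}(D_p)\cong E^{\oplus c_{j-2p}}$, and therefore $\sum_{j\ge 0}[H^j_{\{0\}}(D_p)]_{\D}\,q^j=[E]_{\D}\,q^{2p}\sum_{b\ge 0}c_b\,q^b=[E]_{\D}\,q^{p(2p+1)}\binom{m}{p}_{q^4}$, which is the assertion.

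The only step that is not purely formal is the module-level splitting $\mathscr{H}^b_{\pi^{-1}(0)}(D^Y_p)\cong (D^Y_0)^{\oplus c_b}$, which is what makes the second spectral sequence collapse onto a single column; I would handle it as above. Everything else is bookkeeping with the indices, once one checks the routine acyclicity hypotheses behind the two Grothendieck spectral sequences — namely that $\pi_{\ast}$ sends injectives to $\mathscr{H}^0_{\{0\}}$-acyclic (e.g.\ flasque) sheaves, and that $\mathscr{H}^0_{\pi^{-1}(0)}$ preserves injectivity, hence sends injectives to $\pi_{\ast}$-acyclic sheaves.
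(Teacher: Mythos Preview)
Your proposal is correct and follows essentially the same route as the paper: both arguments use the identity $\pi_{\ast}\circ\mathscr{H}^0_{\pi^{-1}(0)}=\mathscr{H}^0_{\{0\}}\circ\pi_{\ast}$ and the two associated Grothendieck spectral sequences, each of which degenerates by Lemma~\ref{pushEven} (one collapses to a single row because $\tn{R}^j\pi_{\ast}D^Y_p$ lives in degree $2m-2p$, the other to a single column because $\tn{R}^i\pi_{\ast}D^Y_0$ lives in degree $2m$). The only difference is cosmetic: you promote Lemma~\ref{relLocal} to an actual isomorphism $\mathscr{H}^b_{\pi^{-1}(0)}(D^Y_p)\cong (D^Y_0)^{\oplus c_b}$ before applying $\tn{R}\pi_{\ast}$, whereas the paper stays in the Grothendieck group---which suffices since $\tn{R}^i\pi_{\ast}$ vanishes for $i\neq 2m$ on any module whose composition factors are all $D^Y_0$, making $\tn{R}^{2m}\pi_{\ast}$ additive on that subcategory.
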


\begin{proof}
There is an isomorphism of functors $\pi_{\ast}\circ \mathscr{H}^0_{\pi^{-1}(0)}= \mathscr{H}^0_{\{0\}}\circ\pi_{\ast}$, yielding the isomorphism in the derived category $\tn{R}\pi_{\ast}\circ \tn{R}\mathscr{H}^0_{\pi^{-1}(0)}=\tn{R}\mathscr{H}^0_{\{0\}}\circ\tn{R}\pi_{\ast}$. Thus, there is a spectral sequence for all $0\leq p\leq m$:
\begin{equation}\label{spect}
\tn{R}^i\pi_{\ast}\left(\mathscr{H}^j_{\pi^{-1}(0)}(D^Y_p) \right)\Rightarrow \tn{H}^{i+j}\left(\tn{R}\mathscr{H}^0_{\{0\}}(\tn{R}\pi_{\ast}D^Y_p)\right).
\end{equation}
By Lemma \ref{pushEven}, $\tn{R}^j\pi_{\ast}D^Y_0$ is zero unless $j=2m$. Thus (\ref{spect}) is degenerate, so Lemma \ref{relLocal} implies that
\begin{equation}\label{firstSpec}
\sum_{j\geq 0}\left[\tn{H}^j\left(\tn{R}\mathscr{H}^0_{\{0\}}(\tn{R}\pi_{\ast}\mathscr{D}^Y_p)\right)\right]_{\D}\cdot q^j=[E]_{\D}\cdot q^{p(2p-1)+2m}\cdot \binom{m}{p}_{q^4}.
\end{equation}
Next, there is a spectral sequence 
\begin{equation}\label{now}
H^i_{\{0\}}\left(\tn{R}^j\pi_{\ast}D^Y_p\right)\Rightarrow \tn{H}^{i+j}\left(\tn{R}\mathscr{H}^0_{\{0\}}(\tn{R}\pi_{\ast}D^Y_p)\right).
\end{equation}
By Lemma \ref{pushEven}, we have that $\tn{R}^j\pi_{\ast}D^Y_p$ is zero unless $j=2m-2p$. Thus, the spectral sequence (\ref{now}) is degenerate, so by Lemma \ref{pushEven} and (\ref{firstSpec}) we obtain
$$
\sum_{j\geq 0} \left[H^j_{\{0\}}(D_p)\right]_{\D}\cdot q^j=[E]_{\D}\cdot q^{p(2p-1)+2m-(2m-2p)}\cdot \binom{m}{p}_{q^4}=[E]_{\D}\cdot q^{p(2p+1)}\cdot \binom{m}{p}_{q^4},
$$
as required.
\end{proof}

We end by explaining how to obtain the Lyubeznik numbers in the case of odd-sized skew-symmetric matrices. As the category of $\tn{GL}(W)$-equivariant coherent $\D$-modules on $X$ is semi-simple, the formula in Lemma \ref{swapOrder} completely describes the indecomposable summands of the local cohomology modules $H^j_{\overline{O}_k}(S)$ for all $k=0,\cdots,m-1$. We conclude that the odd case in Theorem \ref{main} holds using (\ref{lyubeznik}), Lemma \ref{swapOrder}, and Theorem \ref{locOdd}.

\section*{Acknowledgments}
The author is very grateful to Claudiu Raicu for his guidance while this work was done. We thank the support of the National Science Foundation Graduate Research Fellowship under Grant No. DGE-1313583.

\bibliographystyle{alpha}
\bibliography{mybib}

\Addresses

\end{document}